\newcommand{\half}{{\frac{1}{2}}}   
\newcommand{\df}[2]{\frac{\partial #1}{\partial #2}} 
\DeclareMathOperator\trace{trace}
\newcommand{\vel}{\bm{v}}
\newcommand{\p}{\mathcal{P}}
\newcommand{\con}{\bm{U}}
\newcommand{\prim}{\bm{W}}
\newcommand{\evar}{\bm{V}} 
\newcommand{\Old}[1]{} 
\newcommand{\f}{\bm{F}}
\newcommand{\fx}{\bm{F}^x}
\newcommand{\fy}{\bm{F}^y}
\newcommand{\s}{\bm{S}}
\newcommand{\B}{\bm{B}}
\newcommand{\Bx}{\bm{B}^x}
\newcommand{\By}{\bm{B}^y}
\newcommand{\detP}{\textrm{det}(\p)}
\newcommand{\rev}[1]{\color{black}#1 \color{black}}
\newtheorem{defn}{Definition}
\newtheorem{prop}{Proposition}
\begin{document}

\title{Entropy stable schemes for the shear shallow water model Equations}


\author{Anshu Yadav
	\and
       Deepak Bhoriya
       \and
       Harish Kumar\footnote{Corresponding Author}
       \and
       Praveen Chandrashekar
}

\institute{	Anshu Yadav  \at
	Department of Mathematics\\
	Indian Institute of Technology, Delhi\\
	New Delhi -- 110016, India \\
	\email{mkmanubalia@gmail.com}
	\and
	Deepak Bhoriya  \at
	Physics Department,\\
	University of Notre Dame, IN, USA\\
	\email{dbhoriy2@nd.edu}
	\and
	Harish Kumar  \at
	Department of Mathematics\\
	Indian Institute of Technology, Delhi\\
	New Delhi -- 110016, India \\
	\email{hkumar@iitd.ac.in}
	\and
	Praveen Chandrashekar \at
	Centre for Applicable Mathematics\\
	Tata Institute of Fundamental Research\\
	Bangalore -- 560065, India\\
	\email{praveen@math.tifrbng.res.in}
}

\date{Received: date / Accepted: date}

\maketitle

\begin{abstract}
The shear shallow water model is an extension of the classical shallow water model to include the effects of vertical shear. It is a system of six non-linear hyperbolic PDE with non-conservative products. We develop a high-order entropy stable finite difference scheme for this model in one dimension and extend it to two dimensions on rectangular grids. The key idea is to rewrite the system so that non-conservative terms do not contribute \rev{to} the entropy evolution. Then, we first develop an entropy conservative scheme for the conservative part, which is then extended to the complete system using the fact that the non-conservative terms do not contribute to the entropy production. The entropy dissipative scheme, which leads to an entropy inequality, is then obtained by carefully adding dissipative flux terms. The proposed schemes are then tested on several one and two-dimensional problems to demonstrate their stability and accuracy.
\keywords{shear shallow water model\and non-conservative hyperbolic system\and entropy conservative schemes\and entropy stable scheme}
\subclass{MSC 35L03 \and  MSC 65M08}
\end{abstract}

\section{Introduction}
The system of equations describing multi-dimensional shear shallow water~(SSW) flows was derived by Teshukov~\cite{Teshukov2007}. This system provides an approximation for shallow water flows by including the effects of vertical shear, which are neglected in the classical shallow water (Saint-Venant) model. It is derived from the incompressible Euler equations by a depth averaging process that gives rise to second-order velocity fluctuations, which are retained in the model but ignored in the classical model. Additional equations that account for the second-order fluctuations are also derived where third-order fluctuations arise but are neglected within the order of the approximations. The resulting system of equations has a very close resemblance to the Ten-moment Gaussian closure model of gas dynamics~\cite{Levermore1998}, except for the presence of some additional terms arising from gravitational effects. In particular, the entropy function of the two models is \rev{the} same since the non-conservative terms in \rev{the} SSW model, which are purely due to gravitational effects, do not make any contribution to the entropy equation.

Being a non-conservative hyperbolic system, the numerical solution of the SSW model is challenging since the notion of weak solution requires the choice of a path which is usually not known. The correct path depends on the physical regularization mechanism and even when the correct path is known, the construction of a numerical scheme that converges to the weak solution is hard since the solution is sensitive to the numerical viscosity~\cite{Abgrall2010}. In practice, a linear path is assumed in state space and some path conservative methods are developed which build some information of the waves present in the Riemann solution. For the SSW model, such methods have been developed following HLL-type ideas in~\cite{Gavrilyuk2018,bhole2019fluctuation,Chandrashekar2020}. The first two works split the model into some sub-systems and developed Riemann solver type methods, while the last one treats it in a unified manner by writing it in the form of the Ten-moment system. An exact Riemann solver has been developed in~\cite{Nkonga2022} for the linear path, and comparisons of the path conservative HLL-type numerical methods have been performed. The work in~\cite{Busto2021} proposes a slightly different model of the shear shallow water problem and develops a thermodynamically consistent scheme.

In the present work, we take a different approach to the construction of numerical methods, which is based on entropy consistency ideas~\cite{tadmor2003entropy,ismail2009affordable,fjordholm2012arbitrarily,chandrashekar2013kinetic}. The main technique is to first construct an entropy conservative scheme following the ideas of Tadmor and then add dissipative terms~\cite{ismail2009affordable,chandrashekar2013kinetic} that lead to an entropy inequality. For conservative systems, constructing the entropy conservative scheme is based on finding a central numerical flux that satisfies a certain jump condition~\cite{tadmor2003entropy}, see Theorem~\ref{thm:entconcond}. The SSW model is non-conservative, but the equation has conservative and non-conservative terms. The conservative terms have \rev{the} same structure as the Ten-moment equations of gas dynamics. Since the non-conservative terms do not contribute to the entropy, the ideas from conservative systems can be used to construct an entropy conservative scheme. This is the approach taken in the present work in the finite difference context where high-order accuracy is also achieved by following the ideas in LeFloch~\cite{leFloch2002}.  For conservation laws, there is a close relationship between the existence of a convex entropy function and the symmetrization of the equations, see \cite{Godlewski1996}, Theorem 3.2. This property does not hold for general non-conservative systems; for the SSW model, we have a convex entropy function and an entropy conservation law for smooth solutions, but the equations cannot be symmetrized. The failure to symmetrize is due to the non-conservative terms related to gravitational effects but since they do not contribute to the entropy equation, we still have an entropy equation satisfied by smooth solutions. A framework to construct entropy stable schemes for non-conservative hyperbolic systems is presented in~\cite{Castro2013}, which uses the idea of path-consistent schemes and fluctuation splitting. Our approach is, however, different from this as we exploit the conservation form and the special structure of the non-conservative terms, which do not contribute to the entropy.  The scheme is first developed in one dimension and extended to two dimensions on logically rectangular meshes. \rev{The stability and accuracy of the proposed schemes are demonstrated on several test cases in one and two dimensions. We have also compared the computed solutions with the exact solutions for several test cases. For the roll wave test cases, we have compared the computed solutions with the roll waves observed in some experimental studies in one and two dimensions.}

The rest of the paper is organized as follows. Section~\ref{sec:ssweq} presents the non-conservative SSW model in \rev{a form where the conservative terms are similar to the Ten-moment equations}. The entropy function and the entropy equation are discussed in Section~\ref{sec:entropy}. The semi-discrete entropy conservative and dissipative schemes are constructed in Section~\ref{sec:semid}, which is also extended to higher order accuracy, and the entropy condition is demonstrated. Section~\ref{sec:fulld} discusses the fully discrete scheme obtained by adding a time integration scheme. Section~\ref{sec:res} presents numerical results obtained from the proposed schemes in one and two dimensions, and Section~\ref{sec:sum} provides a summary. In the appendices, we examine the symmetrizability issue of the SSW model and derive the entropy scaled eigenvectors which are used to construct the entropy stable dissipative fluxes.

\section{Equations of shear shallow water model}
\label{sec:ssweq}
The shear shallow water model has been recently studied in \cite{Chandrashekar2020} and expressed in an almost conservative form for \rev{the} evolution of the water depth $h$, the depth average momentum $h\bm{v}$ and the energy tensor $\mathcal{E}=\{\mathcal{E}_{11},\mathcal{E}_{12},\mathcal{E}_{22}\}$. It is a system of non-linear, non-conservative hyperbolic partial differential equations. In $2-$D, following \cite{Chandrashekar2020}, the governing equations of the shear shallow water model (SSW) can be expressed as,
\begin{equation}
	\label{eq:ssw}
	\df{\con}{t} + \df{\fx}{ x} + \df{\fy}{ y} + \Bx \df{h}{ x} + \By \df{h}{y} =  \s
\end{equation}
where
\begin{equation*}
	\con=\begin{pmatrix}
		h\\
		h v_1\\
		h v_2\\
		\mathcal{E}_{11}\\
		\mathcal{E}_{12}\\
		\mathcal{E}_{22}
	\end{pmatrix}, \quad
	\fx=\begin{pmatrix}
		h v_1\\
		h (v_{1}^2+\p_{11})\\
		h (v_1 v_2+\p_{12})\\
		\frac{1}{2}h v_1(v_1^2+{3} \p_{11})\\
		\frac{1}{2}h (v_1^2 v_2+2 v_1 \p_{12}+v_2\p_{11})\\
		\frac{1}{2}h (v_1 v_2^2+2 v_2\p_{12}+v_1 \p_{22})
	\end{pmatrix},
\end{equation*}
\begin{equation*}
	\fy=\begin{pmatrix}
		h v_2\\
		h (v_1 v_2+\p_{12})\\
		h (v_{2}^2+{\p_{22}})\\
		\frac{1}{2}h(v_1^2 v_2+2 v_1 \p_{12}+v_2\p_{11})\\
		\frac{1}{2}h(v_1 v_2^2+2v_2\p_{12}+v_1 \p_{22})\\
		\frac{1}{2}h (v_2^3+{3}v_2 \p_{22}),
	\end{pmatrix},
	\Bx=\begin{pmatrix}
		0\\
		gh\\
		0\\
		ghv_1\\
		\frac{1}{2}ghv_2\\
		0
	\end{pmatrix}, \quad
	\By=\begin{pmatrix}
		0\\
		0\\
		gh\\
		0\\
		\frac{1}{2} ghv_1\\
		ghv_2
	\end{pmatrix}, 
\end{equation*}
\begin{equation*}
	\s=\begin{pmatrix}
		0\\
		-gh\frac{\partial b}{\partial x}-C_f|\vel|v_1\\
		-gh\frac{\partial b}{\partial y} -C_f|\vel|v_2\\
		-\alpha |\vel|^3 \p_{11}-ghv_1\frac{\partial b}{\partial x}-C_f|\vel|v_1^{2}\\
		-\alpha |\vel|^3 \p_{12}-\frac{1}{2}ghv_2\frac{\partial b}{\partial x} -\frac{1}{2}ghv_1\frac{\partial b}{\partial y}-C_f|\vel|v_1 v_{2}\\
		-\alpha |\vel|^3\p_{22} -ghv_2\frac{\partial b}{\partial y}-C_f|\vel|v_2^{2}
	\end{pmatrix}.
\end{equation*}
In the above set of equations, $\bm{v}=(v_1,v_2)$ is the velocity vector, $g>0$ is the acceleration due to gravity, $b\equiv b\left( x, y\right)$ is the bottom topography, $C_f$ is the Chezy coefficient and $\alpha$ is given by the following relation \cite{Gavrilyuk2018,richard2013classical},
\[
\alpha = \max\left(0, C_r \frac{T - \phi h^2}{T^2} \right), \qquad T = \trace(\p) = \p_{11} + \p_{22}, \qquad C_r > 0,
\]
where, $\p=\{\p_{11},\p_{12},\p_{22}\}$ is the Reynolds stress tensor, which is symmetric, positive definite, and arises due to depth averaging. The quantities $C_f,~ C_r,~\phi$ are model constants and must be determined from experiments. The above system is closed with the equation of state,
\[
\mathcal{E}= \frac{h}{2}\left ( \rule{0mm}{4mm} \vel\otimes\vel + \p \right ).
\]
Next, we define the set of primitive variables $\prim$,
$$
\prim=(h,v_1,v_2,\p_{11},\p_{12},\p_{22})^\top
$$

For the solution to be physically acceptable, we need \rev{the} water depth $h$ and \rev{the} symmetric stress tensor $\p$ to be positive. Hence, we consider the following set $\Omega$ of physically admissible solutions,
\begin{align*}
	\Omega=\{\con\in \mathbb{R}^6 |~h>0,~x^\top\p x>0, \forall x\in \mathbb{R}^2 \setminus \{(0,0)\}\}.
\end{align*}
\rev{Now for the solutions of the homogeneous case (i.e., $\s=0$) in $\Omega$, the system \eqref{eq:ssw} is hyperbolic for the states $\con\in \Omega$ with the following set of eigenvalues,
	\[
	\lambda_1 = v_d - \sqrt{gh + 3 \p_{dd}}, \quad \lambda_2 = v_d - \sqrt{\p_{dd}},    \quad \lambda_3 = \lambda_4 = v_d,\]
	\[  \lambda_5 = v_d+ \sqrt{\p_{dd}}, \quad     \lambda_6 = v_d + \sqrt{g h + 3 \p_{dd}}.
	\]
	Here, $d\in \{1,2\}$ and the  indices $\{1,2\}$ denote the $x-$direction and $y-$direction respectively.  The first and last eigenvalues correspond to genuinely non-linear characteristic fields in the sense of Lax~\cite{Godlewski1996}, while the remaining eigenvalues correspond to linearly degenerate characteristic fields~\cite{Gavrilyuk2018}.
}
In \rev{the} $x$-direction, the matrix of right eigenvectors in terms of primitive variables $\prim$ is given by
\begin{align*}
	{R}_{\prim}^{x}=\begin{pmatrix}
		h (A^2-C^2)                        & 0                        & -h               & 0 & 0                       & h (A^2-C^2)                       \\
		-A(A^2-C^2) & 0                        & 0                & 0 & 0                       & A(A^2-C^2) \\
		-2A\p_{12} & -C & 0                & 0 & C & 2A\p_{12} \\
		2C^2(A^2-C^2)                      & 0                        &g h+ \p_{11} & 0 & 0                       & 2C^2(A^2-C^2)                     \\
		\p_{12}(A^2+C^2)         & C^2        & \p_{12} & 0 & C^2       & \p_{12}(A^2+C^2)  \\
		4\p_{12}^2                       & 2\p_{12}        &   0             & 1 & 2\p_{12}      & 4\p_{12}^2
	\end{pmatrix}.
\end{align*}
where $A = \sqrt{gh + 3 \p_{11}}$ and $C = \sqrt{\p_{11}}$. One can get the matrix of right eigenvectors in conservative variables by pre-multiplying the above matrix by the Jacobian matrix $\dfrac{\partial \mathbf{U}}{\partial \bm{W}}$ for the change of variable.
\section{Entropy analysis}
\label{sec:entropy}

Solutions of a nonlinear hyperbolic system can be discontinuous even for very smooth initial data. This leads us to the consideration of weak solutions, which, however, may not be unique. Hence, an additional criterion is considered to select the physically relevant solution among all weak solutions in terms of the entropy condition.

For the SSW model~\eqref{eq:ssw}, we follow~\cite{berthon2006numerical,biswas2021entropy,sen_entropy_2018,berthon2015entropy} to define the entropy $\eta$ and the entropy fluxes \rev{$(q^x, q^y)$} as follows 
\begin{align}\label{entropy-pair}
	\eta=\eta(\con) = -h s, \qquad q^x=-h v_1s, \qquad q^y=-hv_2s
\end{align}
where
\[
s=\log \left( \dfrac{\det\p}{h^2} \right)=\log \left( \dfrac{\p_{11}\p_{22}-\p_{12}^2}{h^2} \right)
\]
For the homogeneous case, we will now prove the entropy equation. We proceed in one dimension as the two and three-dimensional cases are similar. The proof is similar to the entropy equality proof for the Ten-Moment equations presented in \cite{berthon2015entropy,sen_entropy_2018}.
\rev{\begin{prop}
 \label{prop:entropy}
	Smooth solutions of \eqref{eq:ssw} without the source term satisfy the following entropy equality,
	\begin{equation}
		\partial_t s+v_1 \partial_x s=0. \label{s}
	\end{equation}
	As a corollary, for any smooth function H(s), we have,
	\begin{equation}
		\partial_t(h H(s))+ \partial_x (hv_1 H(s))=0. \label{h_s}
	\end{equation}
	In particular, smooth solutions will satisfy the entropy equality,
	\begin{equation}
		\partial_t\eta+ \partial_xq^x=0. \label{entropy_pair_equality}
	\end{equation}
\end{prop}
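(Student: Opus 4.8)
The plan is to reduce the claim to a pure transport equation for $s$ along the flow and then let the mass equation do the rest; as stated, I work in one space dimension and write $\tfrac{D}{Dt}:=\partial_t+v_1\partial_x$ for the material derivative. The first step is to rewrite the one-dimensional version of \eqref{eq:ssw} (with $\s=0$ and all $y$-derivatives dropped) as evolution equations for the primitive quantities. The mass equation gives at once $\tfrac{Dh}{Dt}=-h\,\partial_x v_1$. Combining the two momentum equations with the mass equation to eliminate $\partial_t h$ yields $h\,\tfrac{Dv_1}{Dt}=-\partial_x(h\p_{11})-gh\,\partial_x h$ and $h\,\tfrac{Dv_2}{Dt}=-\partial_x(h\p_{12})$. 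The crucial step is the treatment of the three energy equations: one forms the kinetic-energy balance for $\tfrac12 h v_i v_j$ out of the momentum equations and subtracts it from the $\mathcal{E}_{ij}$ equation, which isolates the evolution of $\p_{ij}$. In this subtraction the gravitational non-conservative terms $gh\,\partial_x h$, $ghv_1\,\partial_x h$, $\tfrac12 ghv_2\,\partial_x h$ present in \eqref{eq:ssw} cancel identically, precisely because each one equals a velocity component times the non-conservative term of the corresponding momentum equation — this is the concrete manifestation of the paper's guiding remark that the non-conservative terms do not feed the entropy. The outcome is the closed subsystem
\[
\frac{D\p_{11}}{Dt}=-2\p_{11}\,\partial_x v_1,\qquad
\frac{D\p_{12}}{Dt}=-\p_{12}\,\partial_x v_1-\p_{11}\,\partial_x v_2,\qquad
\frac{D\p_{22}}{Dt}=-2\p_{12}\,\partial_x v_2,
\]
which coincides with the pressure-tensor evolution of the Ten-moment model. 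I expect this bookkeeping to be the main obstacle: it is the only place where real computation and careful cancellation of the non-conservative terms are needed.

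With these relations in hand the rest is short. Differentiating $\det\p=\p_{11}\p_{22}-\p_{12}^2$ along the flow, $\tfrac{D}{Dt}(\det\p)=\p_{22}\tfrac{D\p_{11}}{Dt}+\p_{11}\tfrac{D\p_{22}}{Dt}-2\p_{12}\tfrac{D\p_{12}}{Dt}$; inserting the three evolution equations, the contributions proportional to $\partial_x v_2$ cancel and one is left with $\tfrac{D}{Dt}(\det\p)=-2(\det\p)\,\partial_x v_1$. For $\con\in\Omega$ we have $h>0$ and $\det\p>0$, so $s=\log(\det\p/h^2)=\log(\det\p)-2\log h$ is well defined and smooth, and
\[
\frac{Ds}{Dt}=\frac{1}{\det\p}\frac{D(\det\p)}{Dt}-\frac{2}{h}\frac{Dh}{Dt}=-2\,\partial_x v_1+2\,\partial_x v_1=0,
\]
which is \eqref{s}.

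The corollary follows by elementary manipulations. For smooth $H$, the chain rule gives $\tfrac{D}{Dt}H(s)=H'(s)\,\tfrac{Ds}{Dt}=0$, i.e. $\partial_t H(s)+v_1\partial_x H(s)=0$. Multiplying the mass equation by $H(s)$ and adding $h$ times this identity,
\[
\partial_t\big(hH(s)\big)+\partial_x\big(hv_1H(s)\big)=H(s)\big[\partial_t h+\partial_x(hv_1)\big]+h\big[\partial_t H(s)+v_1\partial_x H(s)\big]=0,
\]
which is \eqref{h_s}; the choice $H(s)=-s$ then gives \eqref{entropy_pair_equality} with the pair $(\eta,q^x)$ of \eqref{entropy-pair}. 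The two- and three-dimensional cases have exactly the same structure: the remaining momentum and energy equations add $\partial_y v_k$ (and $\partial_z v_k$) terms to the $\p_{ij}$ evolution so that $\tfrac{D}{Dt}(\det\p)=-2(\det\p)(\partial_x v_1+\partial_y v_2+\cdots)$ matches $\tfrac{Dh}{Dt}=-h(\partial_x v_1+\partial_y v_2+\cdots)$, and the cancellation in $Ds/Dt$ persists.
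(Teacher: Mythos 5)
Your proof is correct and follows essentially the same route as the paper: subtract the kinetic-energy balance from the energy equations to obtain the transport equations for $\p_{11},\p_{12},\p_{22}$ (noting the cancellation of the gravitational non-conservative terms), deduce $\partial_t\det\p+v_1\partial_x\det\p+2\det\p\,\partial_x v_1=0$, combine with the mass equation to get \eqref{s}, and obtain \eqref{h_s} and \eqref{entropy_pair_equality} by the chain rule together with mass conservation. The only difference is that you spell out in more detail the cancellation of the $gh\,\partial_x h$ terms and the derivation of the corollary, both of which the paper leaves implicit.
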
}
\begin{proof}
	First, we will prove the equality~\eqref{s}. Assuming $\bm{U}$ is a smooth solution of the system~\eqref{eq:ssw} for \rev{the} homogeneous case, we subtract the kinetic energy contributions from the energy equations to obtain the following equations in terms of the stress components,
	\begin{align*}
		\partial_t \p_{11}+v_1\partial_x \p_{11}+2\p_{11}\partial_x v_1 & =0, & \\
		\partial_t \p_{12}+v_1\partial_x \p_{12}+\p_{12}\partial_x v_1+\p_{11}\partial_x v_2 & =0, & \\
		\partial_t \p_{22}+v_1\partial_x \p_{22}+2\p_{12}\partial_x v_2 & =0.
	\end{align*}
	Using the definition, $\det\p=\p_{11}\p_{22}-\p_{12}^2$, we apply the chain rule and use the above set of equations to obtain,
	\begin{align}
		\partial_t \det\p+v_1\partial_x\det\p+2\det\p\partial_x v_1=0. \label{detp}
	\end{align}
	Now	using \eqref{detp} and \rev{the} water depth equation, $\partial_t h + v_1 \partial_x h + h \partial_x v_1 = 0$, we get,
	\begin{align*}
		\partial_t s +v_1\partial_x s &=\frac{1}{\det \mathcal{P}} \partial_t \detP  - \frac{2}{h}\partial_t h+ v_1\frac{1}{\det \mathcal{P}} \partial_x \detP  - v_1 \frac{2}{h}\partial_x h\\
		&=-\frac{1}{\det\p}\left(v_1\partial_x \det\p +2 \det\p \partial_x v_1 \right) + \frac{1}{\det\p}\left(v_1\partial_x \det\p \right) \\
		&+\frac{2}{h}\left( h\partial_x v_1 +v_1\partial_x h\right) -v_1\frac{2}{h}\partial_x h\\
		&=-2\partial_x v_1 + 2\partial_xv_1\\
		&=0
	\end{align*}
	The relations \eqref{h_s} and \eqref{entropy_pair_equality} can now be obtained using a simple application of the chain rule on the Eqn.~\eqref{s}.
\end{proof}
From the proof of Proposition \ref{prop:entropy}, we observe that the non-conservative terms containing the gravitational effects do not make any contribution to the entropy evolution. In fact, this also follows from the fact that $\eta'(\con) \Bx(\con) = \eta'(\con) \By(\con) = 0$. This motivates the following definition of entropy function for non-conservative systems.
\begin{defn}
	A convex function $\eta(\con)$ is said to be an entropy function for the system
	\begin{equation*}
		\df{\con}{t} + \df{\fx}{ x} + \df{\fy}{ y} + \tilde\Bx \df{\con}{ x} + \tilde\By \df{\con}{y} =  0
	\end{equation*}
	if there exist smooth functions \rev{$q^x(\con)$} and \rev{$q^y(\con)$} such that
	\begin{equation*}
		{q^x}'(\con) = \eta'(\con){\fx}'(\con), \qquad {q^y}'(\con) = \eta'(\con){\fy}'(\con)
	\end{equation*}
	and
	\[
	\eta'(\con) \tilde\Bx(\con) = \eta'(\con) \tilde\By(\con) = 0
	\]
	The functions ($\eta, q^x, q^y$) form an entropy-entropy flux pair.
\end{defn}
The SSW model can be put in the above form with the matrix $\tilde\Bx$ containing the vector $\Bx$ in its first column and similarly, the matrix $\tilde\By$ containing the vector $\By$ in its first column, and all other columns being zero. We have seen above that the SSW model has the entropy pair $(\eta, q)$ and additionally satisfies the conservation law \eqref{entropy_pair_equality} for smooth solutions in the absence of source terms, while for discontinuous solutions, we can demand the entropy inequality
\begin{equation}\label{eq:entropyineq}
	\partial_t\eta+ \partial_xq^x \le 0
\end{equation}
to hold in the sense of distributions. In the next Section, we will develop semi-discrete numerical schemes that satisfy a discrete entropy inequality~\eqref{eq:entropyineq}.  There is a close connection between the existence of an entropy pair and the symmetrization of a system of conservation laws. Since the SSW model is a  non-conservative hyperbolic system, we investigate the symmetrizability of the system in detail in Appendix~\ref{symmetrizability}. Based on the discussion in  Appendix~\ref{symmetrizability}, we conclude this section with the following remark.
\begin{remark}
	The existence of an entropy pair does not guarantee the symmetrizability of the system in the case of non-conservative hyperbolic systems. In particular, \rev{the} SSW system~\eqref{eq:ssw} is not symmetrizable.
\end{remark}
\section{Semi-discrete numerical schemes}
\label{sec:semid}
We can rewrite the SSW model~\eqref{eq:ssw} as follows,
\begin{equation}
	\label{eq:ssw1}
	\df{\con}{t} + \df{\fx}{ x} + \df{\fy}{ y} +
	\B^{nc} =\s,
\end{equation}
where $\B^{nc}= \Bx \df{h}{ x} + \By\df{h}{y}$.
In this Section, we will first develop semi-discrete schemes for the homogeneous part of the system \eqref{eq:ssw1}. The discretization of the source term is then discussed in Section~\ref{subsec:semidisscheme}. We discretize the domain $D=(x_a,x_b) \times (y_a,y_b)$ uniformly into cells $I_{ij}$ with mesh size of $\Delta x \times \Delta y$, where \rev{$\Delta x= \frac{x_b-x_a}{N_x}$ and $\Delta y= \frac{y_b-y_a}{N_y}$. We define the grid points by $x_i=x_a+i \Delta x $, $y_j=y_a+j \Delta y$, , with $0 \le i \le N_x$ and $0 \le j \le N_y$. We also define cell interfaces as $x_{i+1/2}= \frac{x_i+x_{i+1}}{2}$, $y_{j+1/2}= \frac{y_j+y_{j+1}}{2}$.} Then a general semi-discrete conservative finite difference scheme has the following form,
\begin{align}
	\frac{d }{dt}\con_{i,j}(t)
	+ &
	\frac{1}{\Delta x}\left({\fx}_{i+\frac{1}{2},j}(t)-{\fx}_{i-\frac{1}{2},j}(t)\right)\nonumber \\
	+ & \frac{1}{\Delta y}  \left({\fy}_{i,j+\frac{1}{2}}(t)-{\fy}_{i,j-\frac{1}{2}}(t)\right)+\B^{nc}_{i,j}(\con)=0, \label{scheme}
\end{align}
where ${\fx}_{i+\frac{1}{2},j}$ and ${\fy}_{i,j+\frac{1}{2}}$ are the numerical fluxes consistent with the continuous fluxes ${\fx}$ and ${\fy}$, respectively. The derivatives $\frac{\partial h}{\partial x},~\frac{\partial h}{\partial y}$ in the non-conservative term ${\B}_{i,j}^{nc}={\B^x(\con_{i,j})}\big(\frac{\partial h}{\partial x}\big)_{i,j}+{\B^y(\con_{i,j})}\big(\frac{\partial h}{\partial y}\big)_{i,j}$ are approximated by suitable order central difference approximations.\\

The semi-discrete scheme \eqref{scheme} is said to be an entropy stable scheme if the computed solution satisfies the following entropy inequality,
\begin{equation*}
	\frac{d}{dt}  \eta(\con_{i,j})  +\frac{1}{\Delta x} \left( \hat{q^x}_{i+\frac{1}{2},j} - \hat{q^x}_{i-\frac{1}{2},j}\right)+\frac{1}{\Delta y}\left( \hat{q^y}_{i,j+\frac{1}{2}} - \hat{q^y}_{i,j-\frac{1}{2}}\right) \le 0,
\end{equation*}
for some numerical entropy fluxes $\hat{q^y}$ and $\hat{q^y}$  consistent with the fluxes ${q}^x$ and $q^y$, respectively.
The procedure for construction of an entropy stable scheme involves first constructing an entropy conservative scheme.  We say the semi-discrete scheme \eqref{scheme} is an entropy conservative scheme if the computed solution satisfies the following entropy equality
\begin{equation*}
	\frac{d}{dt}  \eta(\con_{i,j})  +\frac{1}{\Delta x} \left( \tilde{q^x}_{i+\frac{1}{2},j} - \tilde{q^x}_{i-\frac{1}{2},j}\right)+\frac{1}{\Delta y}\left( \tilde{q^y}_{i,j+\frac{1}{2}} - \tilde{q^y}_{i,j-\frac{1}{2}}\right) = 0,
\end{equation*}
for some numerical entropy fluxes $\tilde{q^x}$ and $\tilde{q^y}$  consistent with the fluxes ${q}^x$ and $q^y$, respectively. Hence, first, we discuss the construction of entropy conservative scheme.
\subsection{Entropy conservative schemes}
For the construction of numerical flux that leads to an entropy conservative scheme, we define the entropy variable $\evar=\frac{\partial \eta}{\partial \con}$ and entropy potential $\psi^{k}=\evar^\top \bm{F^k}-q^{k},~k\in \{x,y\}$.
A simple calculation results in,
\begin{align}\label{entvar}
	\renewcommand{\arraystretch}{2}
	\evar:=\frac{\partial \eta}{\partial \con}=\begin{pmatrix}
		4-s- \dfrac{1}{\detP}\left(\p_{11} v_2^2+\p_{22} v_1^2-2\p_{12}v_1 v_2\right)\\
		\dfrac{2 (\p_{22}v_1-\p_{12}v_2)}{\detP}\\
		\dfrac{2  (\p_{11}v_2-\p_{12}v_1)}{\detP}\\
		-\dfrac{2  \p_{22}}{\detP}\\
		\dfrac{4  \p_{12}}{\detP}\\
		-\dfrac{2  \p_{11}}{\detP}
	\end{pmatrix}
\end{align}
The entropy potentials are given by,
\begin{align*}
	\psi^x=\evar^\top \fx-q^x= 2 h v_1, \qquad \psi^y=\evar^\top \fy-q^y= 2 h v_2.
\end{align*}
We now recall the following theorem, which provides us a procedure for the construction of entropy conservative fluxes, ${\tilde{\fx}}$ and ${\tilde{\fy}}$. For \rev{a given} variable $a$, we introduce the notations $[\![{\cdot}]\!]$ for the jump and $\bar{\cdot}$ for the arithmetic average in the following way,
\begin{equation*}
	[\![a]\!]_{i+\frac{1}{2},j}=a_{i+1,j}-a_{i,j},  \qquad \bar{a}_{i+\frac{1}{2},j}=\frac{1}{2}(a_{i+1,j}+a_{i,j}),
\end{equation*}
\begin{equation*}
	[\![a]\!]_{i,j+\frac{1}{2}}=a_{i,j+1}-a_{i,j}, \qquad
	\bar{a}_{i,j+\frac{1}{2}}=\frac{1}{2}(a_{i,j+1}+a_{i,j}).
\end{equation*}
%

\begin{theorem}[{Tadmor} \cite{tadmor1987numerical}]\label{thm:entconcond}
	Let ${\tilde{\fx}}$ and ${\tilde{\fy}}$ be the consistent numerical fluxes, which satisfy
	\begin{equation}
		[\![\evar ]\!]^{\top}_{i+\frac{1}{2},j}\,{\tilde{\fx}}_{i+\frac{1}{2},j}=[\![\psi^x]\!]_{i+\frac{1}{2},j}, \ \quad \
		[\![\evar ]\!]^{\top}_{i,j+\frac{1}{2}}\,{\tilde{\fy}}_{i,j+\frac{1}{2}}=[\![\psi^y]\!]_{i,j+\frac{1}{2}},
		\label{tadmor_thm}
	\end{equation}
	then the scheme~\eqref{scheme} with the numerical fluxes ${\tilde{\fx}}$ and ${\tilde{\fy}}$ is second-order accurate and entropy conservative, i.e., the computed solutions satisfy the discrete entropy equality
	\begin{equation*}
		\frac{d}{dt}  \eta(\con_{i,j})  +\frac{1}{\Delta x} \left( \tilde{q^x}_{i+\frac{1}{2},j} - \tilde{q^x}_{i-\frac{1}{2},j}\right)+\frac{1}{\Delta y}\left( \tilde{q^y}_{i,j+\frac{1}{2}} - \tilde{q^y}_{i,j-\frac{1}{2}}\right) = 0,
	\end{equation*}
	corresponding to the numerical entropy fluxes,
	\begin{equation*}
		\tilde{q^x}_{i+\frac{1}{2},j}=\bar{\evar}_{i+\frac{1}{2},j}^{\top}{\tilde{\fx}}_{i+\frac{1}{2},j}-\bar{\psi^x}_{i+\frac{1}{2},j} \qquad
		\text{and} \qquad
		\tilde{q^y}_{i,j+\frac{1}{2}}=\bar{\evar}_{i,j+\frac{1}{2}}^{\top}{\tilde{\fy}}_{i,j+\frac{1}{2}}-\bar{\psi^y}_{i,j+\frac{1}{2}}.
	\end{equation*}
\end{theorem}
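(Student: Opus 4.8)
\noindent The plan is to contract the semi-discrete scheme~\eqref{scheme} with the entropy variable $\evar_{i,j}=\evar(\con_{i,j})$ at the cell $(i,j)$ and show that each term collapses into the asserted discrete entropy balance. Since $\evar=\partial\eta/\partial\con$, the chain rule gives $\evar_{i,j}^{\top}\tfrac{d}{dt}\con_{i,j}=\tfrac{d}{dt}\eta(\con_{i,j})$, which produces the time-derivative term. For the non-conservative contribution $\B^{nc}_{i,j}=\Bx(\con_{i,j})(\partial_x h)_{i,j}+\By(\con_{i,j})(\partial_y h)_{i,j}$ I would invoke the structural identity $\eta'(\con)\Bx(\con)=\eta'(\con)\By(\con)=0$ noted after Proposition~\ref{prop:entropy}; evaluated at $\con=\con_{i,j}$ it gives $\evar_{i,j}^{\top}\B^{nc}_{i,j}=0$, so the non-conservative term drops out of the entropy estimate entirely. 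It then remains only to handle the two numerical flux differences.

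The core of the argument is a one-interface algebraic identity. Writing $\bar{\evar}_{\iph,j}=\evar_{i,j}+\tfrac12\jump{\evar}_{\iph,j}$ and substituting this into the definition $\tilde{q^x}_{\iph,j}=\bar{\evar}_{\iph,j}^{\top}\tilde{\fx}_{\iph,j}-\bar{\psi^x}_{\iph,j}$ together with the jump relation~\eqref{tadmor_thm}, one obtains $\evar_{i,j}^{\top}\tilde{\fx}_{\iph,j}=\tilde{q^x}_{\iph,j}+\psi^x_{i,j}$. Carrying out the mirror computation at the interface $\imh$, where now $\bar{\evar}_{\imh,j}=\evar_{i,j}-\tfrac12\jump{\evar}_{\imh,j}$, yields $\evar_{i,j}^{\top}\tilde{\fx}_{\imh,j}=\tilde{q^x}_{\imh,j}+\psi^x_{i,j}$ with the \emph{same} potential term $\psi^x_{i,j}$. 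Subtracting, the potential cancels and $\evar_{i,j}^{\top}(\tilde{\fx}_{\iph,j}-\tilde{\fx}_{\imh,j})=\tilde{q^x}_{\iph,j}-\tilde{q^x}_{\imh,j}$; the identical manipulation in $y$ gives $\evar_{i,j}^{\top}(\tilde{\fy}_{i,j+\frac12}-\tilde{\fy}_{i,j-\frac12})=\tilde{q^y}_{i,j+\frac12}-\tilde{q^y}_{i,j-\frac12}$. Dividing by $\dx$ and $\dy$ and assembling the contracted scheme produces exactly the claimed discrete entropy equality.

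It then remains to check the accompanying claims. Consistency of $\tilde{q^x}$ is immediate: setting $\con_{i,j}=\con_{i+1,j}=\con$ makes $\bar{\evar}=\evar(\con)$, $\tilde{\fx}=\fx(\con)$ and $\bar{\psi^x}=\psi^x(\con)$, so $\tilde{q^x}=\evar(\con)^{\top}\fx(\con)-\psi^x(\con)=q^x(\con)$ by the definition of the entropy potential, and similarly for $\tilde{q^y}$. Second-order accuracy follows from the standard truncation analysis of Tadmor~\cite{tadmor1987numerical,tadmor2003entropy}: the entropy-conservative flux is a consistent two-point flux, and a Taylor expansion about the interface midpoint for smooth data shows $\tilde{\fx}_{\iph,j}=\fx(\con_{\iph,j})+O(\dx^2)$ (and likewise in $y$), whence the spatial truncation error is $O(\dx^2)+O(\dy^2)$. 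I expect the work to be bookkeeping rather than hard analysis; the two places that need genuine care are (i) verifying that the non-conservative term is indeed annihilated by $\evar_{i,j}^{\top}$, which rests entirely on the property $\eta'\Bx=\eta'\By=0$ of the SSW entropy, and (ii) keeping the signs of the half-jumps $\pm\tfrac12\jump{\evar}$ straight at the two interfaces so that the entropy potential $\psi^x_{i,j}$ (resp.\ $\psi^y_{i,j}$) cancels cleanly in the telescoping.
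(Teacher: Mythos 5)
Your proof is correct and is exactly the standard Tadmor argument that the paper itself relies on by citation to \cite{tadmor1987numerical} without reproducing it: contracting the scheme with $\evar_{i,j}$, using $\bar{\evar}_{i\pm\frac12,j}=\evar_{i,j}\pm\frac12\jump{\evar}_{i\pm\frac12,j}$ together with the jump condition \eqref{tadmor_thm} so that the cell potential $\psi^x_{i,j}$ cancels in the telescoping, and killing the non-conservative term via $\evar_{i,j}^{\top}\Bx(\con_{i,j})=\evar_{i,j}^{\top}\By(\con_{i,j})=0$. The only part left at the level of a citation is the second-order accuracy of the two-point entropy-conservative flux, which is likewise only asserted (via Tadmor) in the paper, so nothing is missing relative to the source.
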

First, we consider the $x$-directional identity \eqref{tadmor_thm} to get the conservative flux in \rev{the} $x$-direction. Note that we have \rev{a} single algebraic equation with $6$ unknowns ${\tilde{\fx}}=[\tilde{f}_1,\,\tilde{f}_2,\,\tilde{f}_3,\,\tilde{f}_4,\,\tilde{f}_5,\,\tilde{f}_6]^\top$. Therefore, we cannot have \rev{a} unique solution for the algebraic equation~\eqref{tadmor_thm}. In \cite{ismail2009affordable,chandrashekar2013kinetic}, \rev{the} authors have presented a procedure to find an inexpensive entropy conservative flux. For the SSW model \eqref{eq:ssw1}, we follow the approach presented in \cite{chandrashekar2013kinetic} to construct an entropy conservative flux in the next sub-section~\eqref{entropy_con_flux}.
%
%
\subsubsection{Entropy conservative flux} \label{entropy_con_flux}
We first consider the $x-$directional case. Following \cite{tadmor1987numerical}, we need to find an entropy conservative flux ${\tilde{\fx}}=[\tilde{f}_1,\,\tilde{f}_2,\,\tilde{f}_3,\,\tilde{f}_4,\,\tilde{f}_5,\,\tilde{f}_6]^\top$ satisfying the identity:
\begin{align}
	[\![ \evar]\!]^\top \cdot {\tilde{\fx}}=[\![ \psi^x]\!].\label{confluxx}
\end{align}
For simplicity, we ignore the indices $i$ and define
\begin{equation*}
	D = {\det(\p)}=\p_{11}\p_{22}-\p_{12}^2, \quad
	\beta_{11}=\dfrac{\p_{11}}{D}, \quad
	\beta_{12}=\dfrac{\p_{12}}{D},
\end{equation*}
\begin{equation*}
	\beta_{22}=\dfrac{\p_{22}}{D}, \quad
	D_{\beta}=\beta_{11}\beta_{22}-\beta_{12}^2
\end{equation*}
We also define the logarithmic average,  $a^{\ln}=\dfrac{[\![a]\!]}{[\![\ln a]\!]}$. As \rev{the} conservative flux is the same as the flux of Ten-Moment equations (where \rev{the} water depth is replaced by density), we use the entropy conservative flux derived in \cite{sen_entropy_2018} for Ten-Moment equations. The expression of the numerical flux is,

		\begin{equation*}
			{\tilde{\fx}}=
			\begin{pmatrix}
				h^{ln} \bar{v}_1  \\
				\bar{v}_1\tilde{f}_1+\frac{\bar{\beta}_{11} \bar{h}}{\bar{\beta}_{11}\bar{\beta}_{22}-\left(\bar{\beta}_{12}\right)^2}                             \\
				\bar{v}_2\tilde{f}_1+\frac{\bar{\beta}_{12} \bar{h}}{\bar{\beta}_{11}\bar{\beta}_{22}-\left(\bar{\beta}_{12}\right)^2}                             \\
				\frac{1}{2}\left(\dfrac{\bar{\beta}_{11}}{D_\beta^{ln}}-\overline{\left(v_1\right)^2}\right)\tilde{f}_1+\bar{v}_1\tilde{f}_2                       \\
				\frac{1}{2}\bigg(\left(\dfrac{\bar{\beta}_{12}}{D_\beta^{ln}}-\overline{v_1 v_2}\right)\tilde{f}_1+\bar{v}_1\tilde{f}_3+\bar{v}_2\tilde{f}_2\bigg) \\
				\frac{1}{2}\left(\dfrac{\bar{\beta}_{22}}{D_\beta^{ln}}-\overline{\left(v_2\right)^2}\right)\tilde{f}_1+\bar{v}_2\tilde{f}_3
			\end{pmatrix}.
		\end{equation*}
		The $y-$directional entropy conservative flux is given by, $\bm{\tilde{\bm{F}}^y}=[\tilde{g}_1,\,\tilde{g}_2,\,\tilde{g}_3,\,\tilde{g}_4,\,\tilde{g}_5,\,\tilde{g}_6]^\top$ as
		\begin{equation*}
			{\tilde{\fy}}=
			\begin{pmatrix}
				h^{ln} \bar{v}_2        \\
				\bar{v}_1\tilde{g}_1+\frac{\bar{\beta}_{12} \bar{h}}{\bar{\beta}_{11}\bar{\beta}_{22}-\left(\bar{\beta}_{12}\right)^2}                             \\
				\bar{v}_2\tilde{g}_1+\frac{\bar{\beta}_{22} \bar{h}}{\bar{\beta}_{11}\bar{\beta}_{22}-\left(\bar{\beta}_{12}\right)^2}                             \\
				\frac{1}{2}\left(\dfrac{\bar{\beta}_{11}}{D_\beta^{ln}}-\overline{\left(v_1\right)^2}\right)\tilde{g}_1+\bar{v}_1\tilde{g}_2                       \\
				\frac{1}{2}\bigg(\left(\dfrac{\bar{\beta}_{12}}{D_\beta^{ln}}-\overline{v_1 v_2}\right)\tilde{g}_1+\bar{v}_1\tilde{g}_3+\bar{v}_2\tilde{g}_2\bigg) \\
				\frac{1}{2}\left(\dfrac{\bar{\beta}_{22}}{D_\beta^{ln}}-\overline{\left(v_2\right)^2}\right)\tilde{g}_1+\bar{v}_2\tilde{g}_3
			\end{pmatrix}.
		\end{equation*}
		Note that these are two-point fluxes, i.e., they depend on two states. One can easily observe that the above fluxes ${\tilde{\fx}}$ and ${\tilde{\fy}}$ are consistent with the exact fluxes $\fx$ and $\fy$, respectively, when the two states are equal.
		
		
		\subsection{Higher order entropy conservative schemes}
		The entropy conservative fluxes presented above are only second-order accurate. To get higher-order accurate conservative fluxes, we follow the approach of \cite{leFloch2002}. They have constructed $2p^{th}$, \rev{$p\in\mathbb{N}$}, order accurate entropy conservative flux by choosing specific linear combinations of the second-order accurate entropy conservative fluxes. In particular, the $x$-directional entropy conservative flux for the $4^{th}$-order ($p=2$) scheme is given by
		\begin{eqnarray}
			{\tilde{\fx}}_{i+\frac{1}{2},j}^4=\frac{4}{3}{\tilde{\fx}}_{i+\frac{1}{2},j}(\con_{i,j},\con_{i+1,j})\nonumber\\-\frac{1}{6} \bigg( {\tilde{\fx}}_{i+\frac{1}{2},j} (\con_{i-1,j},\con_{i+1,j})+
			{\tilde{\fx}}_{i+\frac{1}{2},j}(\con_{i,j},\con_{i+2,j}) \bigg).\label{4thorder_x}
		\end{eqnarray}
		\rev{A} similar expression can be derived for the $y$-directional $4^{th}$-order flux
		\begin{eqnarray}
			{\tilde{\fy}}_{i,j+\frac{1}{2}}^4=\frac{4}{3}{\tilde{\fy}}_{i,j+\frac{1}{2}}(\con_{i,j},\con_{i,j+1})\nonumber\\-\frac{1}{6} \bigg( {\tilde{\fy}}_{i,j+\frac{1}{2}} (\con_{i,j-1},\con_{i,j+1})+
			{\tilde{\fy}}_{i,j+\frac{1}{2}}(\con_{i,j},\con_{i,j+2}) \bigg).\label{4thorder_y}
		\end{eqnarray}
		The scheme~\eqref{scheme} with the numerical fluxes ${\tilde{\fx}}^4$ and ${\tilde{\fy}}^4$ is fourth order accurate and entropy conservative.
		
		
\subsection{Entropy stable schemes}
		As the entropy needs to decay at shocks, the entropy conservative schemes designed above will produce oscillations at the shock. Hence, we need an appropriate entropy dissipation process, resulting in the entropy inequality. We follow \cite{tadmor1987numerical} to define the modified fluxes ${\hat{\fx}},~{\hat{\fy}}$ as follows:
		\begin{equation}
			\begin{aligned}
				{\hat{\fx}}_{i+\frac{1}{2},j} ={\tilde{\fx}}_{i+\frac{1}{2},j} - \frac{1}{2} \textbf{D}^{x}_{i+\frac{1}{2},j}[\![ \evar]\!]_{i+\frac{1}{2},j},
				\\
				{\hat{\fy}}_{i,j+\frac{1}{2}} = {\tilde{\fy}}_{i,j+\frac{1}{2}} - \frac{1}{2} \textbf{D}^{y}_{i,j+\frac{1}{2}}[\![ \evar]\!]_{i,j+\frac{1}{2}},
				\label{es_numflux}
			\end{aligned}
		\end{equation}
		where $\textbf{D}^x_{i+\frac{1}{2},j}$ and $\textbf{D}^y_{i,j+\frac{1}{2}}$ are symmetric positive definite matrices. Then we have the following Lemma:
		\begin{lemma}[Tadmor \cite{tadmor1987numerical}] The numerical scheme \eqref{scheme} with the modified numerical fluxes \eqref{es_numflux} is entropy stable, i.e., the computed solution satisfies,
			\begin{equation*}
				\frac{d}{dt}  \eta(\con_{i,j})  +\frac{1}{\Delta x} \left( \hat{q^x}_{i+\frac{1}{2},j} - \hat{q^x}_{i-\frac{1}{2},j}\right)+\frac{1}{\Delta y}\left( \hat{q^y}_{i,j+\frac{1}{2}} - \hat{q^y}_{i,j-\frac{1}{2}}\right) \le 0,
			\end{equation*}
			with consistent numerical entropy flux functions,
			\begin{equation*}
				\begin{aligned}
					\hat{q^x}_{i+\frac{1}{2},j}=  \tilde{q^x}_{i+\frac{1}{2},j} - \frac{1}{2}\bar{\evar}_{i+\frac{1}{2},j}^{\top}  \textbf{D}_{i+\frac{1}{2},j}^{x}[\![ \evar]\!]_{i+\frac{1}{2},j}
				\end{aligned} \end{equation*}
			
					and \begin{equation*}
						\begin{aligned}
							\qquad \hat{q^y}_{i,j+\frac{1}{2}}=    \tilde{q^y}_{i,j+\frac{1}{2}} -  \frac{1}{2}\bar{\evar}_{i,j+\frac{1}{2}}^{\top}  \textbf{D}_{i,j+\frac{1}{2}}^{y}[\![ \evar]\!]_{i,j+\frac{1}{2}}.\end{aligned} \end{equation*}
		\end{lemma}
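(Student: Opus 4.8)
The plan is to adapt Tadmor's classical contraction argument, accounting for the one new feature compared with the conservative setting, namely the non-conservative term. First I would take the inner product of the semi-discrete scheme~\eqref{scheme}, with the modified fluxes~\eqref{es_numflux}, against the nodal entropy variable $\evar_{i,j}=\evar(\con_{i,j})$. By the chain rule $\evar_{i,j}^\top\frac{d}{dt}\con_{i,j}=\frac{d}{dt}\eta(\con_{i,j})$, so the time derivative becomes the discrete entropy rate. The point specific to the SSW model is that the non-conservative contribution is annihilated exactly: since $\B^{nc}_{i,j}=\Bx(\con_{i,j})(\partial_x h)_{i,j}+\By(\con_{i,j})(\partial_y h)_{i,j}$ and $\evar(\con)^\top\Bx(\con)=\evar(\con)^\top\By(\con)=0$ (Section~\ref{sec:entropy}), we have $\evar_{i,j}^\top\B^{nc}_{i,j}=0$ at every node, irrespective of which central difference is used for $\partial_x h$ and $\partial_y h$.

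Next I would split $\hat{\fx}=\tilde{\fx}-\tfrac12\textbf{D}^x[\![\evar]\!]$ (and likewise in $y$) and treat the two pieces separately. For the entropy-conservative piece, Theorem~\ref{thm:entconcond} applies: combining the jump relation~\eqref{tadmor_thm} with the elementary identities $\bar\evar_{i\pm\frac12,j}-\evar_{i,j}=\pm\tfrac12[\![\evar]\!]_{i\pm\frac12,j}$ shows that $\evar_{i,j}^\top\big(\tilde{\fx}_{i+\frac12,j}-\tilde{\fx}_{i-\frac12,j}\big)$ telescopes to $\tilde{q^x}_{i+\frac12,j}-\tilde{q^x}_{i-\frac12,j}$, with $\tilde{q^x}$ as in Theorem~\ref{thm:entconcond}. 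For the dissipative piece, set $X_{i+\frac12,j}:=\textbf{D}^x_{i+\frac12,j}[\![\evar]\!]_{i+\frac12,j}$; using $\evar_{i,j}=\bar\evar_{i+\frac12,j}-\tfrac12[\![\evar]\!]_{i+\frac12,j}=\bar\evar_{i-\frac12,j}+\tfrac12[\![\evar]\!]_{i-\frac12,j}$ a short computation gives
\begin{align*}
	-\tfrac12\,\evar_{i,j}^\top\big(X_{i+\frac12,j}-X_{i-\frac12,j}\big)
	&= -\tfrac12\big(\bar\evar_{i+\frac12,j}^\top X_{i+\frac12,j}-\bar\evar_{i-\frac12,j}^\top X_{i-\frac12,j}\big)\\
	&\quad+\tfrac14\Big([\![\evar]\!]_{i+\frac12,j}^\top\textbf{D}^x_{i+\frac12,j}[\![\evar]\!]_{i+\frac12,j}+[\![\evar]\!]_{i-\frac12,j}^\top\textbf{D}^x_{i-\frac12,j}[\![\evar]\!]_{i-\frac12,j}\Big).
\end{align*}
The first bracket is exactly the telescoping difference of $-\tfrac12\bar\evar^\top\textbf{D}^x[\![\evar]\!]$, which, added to $\tilde{q^x}$, reproduces the claimed numerical entropy flux $\hat{q^x}$; the remaining quadratic terms are nonnegative because each $\textbf{D}^x_{i+\frac12,j}$ and $\textbf{D}^y_{i,j+\frac12}$ is symmetric positive definite.

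Collecting the three contributions, the discrete entropy balance reads
\begin{align*}
	\frac{d}{dt}\eta(\con_{i,j})&+\frac{1}{\Delta x}\big(\hat{q^x}_{i+\frac12,j}-\hat{q^x}_{i-\frac12,j}\big)+\frac{1}{\Delta y}\big(\hat{q^y}_{i,j+\frac12}-\hat{q^y}_{i,j-\frac12}\big)\\
	&= -\frac{1}{4\Delta x}\Big([\![\evar]\!]_{i+\frac12,j}^\top\textbf{D}^x_{i+\frac12,j}[\![\evar]\!]_{i+\frac12,j}+[\![\evar]\!]_{i-\frac12,j}^\top\textbf{D}^x_{i-\frac12,j}[\![\evar]\!]_{i-\frac12,j}\Big)\\
	&\quad-\frac{1}{4\Delta y}\Big([\![\evar]\!]_{i,j+\frac12}^\top\textbf{D}^y_{i,j+\frac12}[\![\evar]\!]_{i,j+\frac12}+[\![\evar]\!]_{i,j-\frac12}^\top\textbf{D}^y_{i,j-\frac12}[\![\evar]\!]_{i,j-\frac12}\Big)\le 0,
\end{align*}
which is the asserted inequality; consistency of $\hat{q^x},\hat{q^y}$ with $q^x,q^y$ follows from the consistency of $\tilde{q^x},\tilde{q^y}$ together with the vanishing of $[\![\evar]\!]$ at constant states.

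I do not expect a genuine obstacle: the computation is essentially Tadmor's, and the only SSW-specific ingredient is the pointwise identity $\evar_{i,j}^\top\B^{nc}_{i,j}=0$ established in the first step. The points needing care are the bookkeeping of averages versus jumps — in particular the factor $\tfrac14$ multiplying the dissipation — and, for the high-order version, the observation that the same contraction applies unchanged to the fourth-order fluxes~\eqref{4thorder_x}--\eqref{4thorder_y}, which are themselves entropy conservative (they admit numerical entropy fluxes $\tilde{q^x},\tilde{q^y}$ for which the contraction telescopes) while the dissipation still uses the two-point jump $[\![\evar]\!]$.
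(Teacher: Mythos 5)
Your proof is correct and follows exactly the standard Tadmor contraction argument that the paper relies on (it states this lemma by citation and reuses the same telescoping-plus-quadratic-dissipation structure in its proof of the subsequent semi-discrete entropy stability theorem): multiply by $\evar_{i,j}$, kill the non-conservative term via $\evar^\top\B^x=\evar^\top\B^y=0$, telescope the entropy-conservative part via the jump condition, and absorb the average part of the dissipation into $\hat{q}$ while the remaining quadratic forms are nonpositive by symmetric positive definiteness of $\textbf{D}$. The bookkeeping, including the factor $\tfrac14$ on the dissipation terms, is right.
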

		Here, we use \textit{Rusanov's type} diffusion operators for the matrix $\textbf{D}$, given by,
		\begin{equation}  \label{diffusiontype}
			\textbf{D}_{i+\frac{1}{2},j}^{x} = \tilde{R}^x_{i+\frac{1}{2},j} \Lambda_{i+\frac{1}{2},j}^x \tilde{R}_{i+\frac{1}{2},j}^{x \top} \qquad \text{and} \qquad \textbf{D}_{i,j+\frac{1}{2}}^y = \tilde{R}_{i,j+\frac{1}{2}}^y \Lambda_{i,j+\frac{1}{2}}^y \tilde{R}_{i,j+\frac{1}{2}}^{y \top},
		\end{equation}
		where ${\tilde{R}^d},\, d \in \{x,y\},$ are matrices of the scaled entropy right eigenvectors of the jacobian $\dfrac{\partial \f^d}{\partial \con}$, and ${\Lambda^d},\, d \in \{x,y\},$ are $6\times 6$ diagonal matrices of the form
		$$
  {\Lambda^d}=  \left( \max_{1 \leq k \leq 6} |\lambda_k^d|\right) \mathbf{I}_{6 \times 6},  \ \ \, d \in \{x,y\}.
  $$
		Here $\{\lambda_k^d: 1 \leq k \leq 6 \}$ is the set of eigenvalues of the jacobian $\frac{\partial\f^d}{\partial \con}$. The procedure to obtain the scaled right eigenvector matrices $\tilde{R}$ is given in~\cite{barth1999numerical}. We follow \cite{barth1999numerical},\cite{sen_entropy_2018} to derive expressions for the scaling matrices in Appendix~\eqref{scaledrev}.
		
		Now, with the choice of diffusion operator \eqref{diffusiontype}, the numerical scheme \eqref{scheme} with the numerical flux \eqref{es_numflux} is entropy stable.
		
		
		\subsection{Higher order entropy stable schemes}
		The entropy stable scheme \eqref{scheme} discussed above with the numerical flux \eqref{es_numflux} contains the jump terms $[\![\evar]\!]_{i+\frac{1}{2},j}$ and $[\![\evar]\!]_{i,j+\frac{1}{2}}$ which are of first order accuracy. Therefore, the resultant scheme cannot be expected to be more than first-order accurate. The natural idea to increase the order of accuracy is to approximate the jump terms using higher-order polynomial reconstructions. However, straightforward reconstruction cannot be shown to preserve entropy stability. Therefore, instead of reconstructing \rev{the} entropy variable $\evar_{i,j}$ we follow \rev{the} reconstruction procedure of \cite{fjordholm2012arbitrarily} to reconstruct the scaled entropy variables $\bm{\mathcal{{V}}}_{i,j}$, defined as
		\[\bm{\mathcal{{V}}}_{i,j}^{x,\pm}\,=\, {R}^{x^{\top}}_{i\pm\frac{1}{2},j}\evar_{i,j}.\]
		If $\bm{\mathcal{\tilde{V}}}_{i,j}^{x,\pm}$ denotes the $k$-th order reconstructed values of $\bm{\mathcal{V}}_{i,j}^{x,\pm}$ in the $x$-direction, then,
		$$\bm{\tilde{V}}_{i,j}^{x,\pm}\,=\, \left\lbrace \tilde{ R}^{x^{\top}}_{i\pm\frac{1}{2},j}\right\rbrace ^{(-1)}\bm{\mathcal{\tilde{{V}}}}_{i,j}^{x,\pm},$$
		are the corresponding $k$-th order reconstructed values for $\bm{V}_{ij}$. Hence, the modified numerical flux is given by,
		\begin{equation}
			{\hat{\fx}}^{k}_{i+\frac{1}{2},j}\,=\,{\tilde{\fx}}^{2p}_{i+\frac{1}{2},j}\,-\,\frac{1}{2}\,\textbf{D}_{i+\frac{1}{2},j}^x[\![ \bm{\tilde{V}}^x]\!]_{i+\frac{1}{2},j}^k
			\label{eq:entropy_stable_flux}
		\end{equation}
		where $[\![ \bm{\tilde{V}}^x]\!]_{i+\frac{1}{2},j}^k$ stands for,
		$$
		[\![ \bm{\tilde{V}}^x]\!]_{i+\frac{1}{2},j}^k = \bm{\tilde{V}}^{x,-}_{i+1,j}\,-\,\bm{\tilde{V}}^{x,+}_{i,j}
		$$
		and $p \in \mathbb{N}$ is chosen as
		\begin{itemize}
			\item $p=k/2$ if $k$ is even,
			\item $p=(k+1)/2$ if $k$ is odd,
		\end{itemize}
		where $k$ is the accuracy of the time integration scheme.  As in \cite{fjordholm2012arbitrarily}, the sufficient condition for the numerical flux \eqref{eq:entropy_stable_flux} to be entropy stable is that the reconstruction process for ${\bm{\mathcal{V}}}$ must satisfy the sign preserving property, i.e., the sign of the reconstructed jumps at any face must be same as the sign of the original jumps; for example, for \rev{a} reconstruction along \rev{the} $x$-direction, we need the following
		\begin{equation}\label{eq:recsign}
			\textrm{sign} \left( \bm{\mathcal{V}}^{x,-}_{i+1,j} - \bm{\mathcal{V}}^{x,+}_{i,j} \right) = \textrm{sign} \left(  \bm{\mathcal{V}}_{i+1,j} - \bm{\mathcal{V}}_{i,j} \right)
		\end{equation}
		to hold for each component.  Consequently, we use \textit{minmod} reconstruction for the second order scheme, which satisfies this property and denotes it by O2$\_$ES. Following \cite{fjordholm2013eno}, for the higher order schemes, we use the \textit{ENO} based reconstruction. In particular, for the third-order scheme, we use \rev{the} fourth-order entropy conservative flux \eqref{4thorder_x} and \rev{the} third-order $\text{ENO}$ reconstruction to obtain the expression for the $x-$directional flux as
		\begin{equation*}
			{\hat{\fx}}^{3}_{i+\frac{1}{2},j}\,=\,{\tilde{\fx}}^4_{i+\frac{1}{2},j}\,-\,\frac{1}{2}\,\textbf{D}_{i+\frac{1}{2},j}^x[\![ \bm{\tilde{V}}^x]\!]_{i+\frac{1}{2},j}^3
		\end{equation*}
		and denote it by O3$\_$ES. Similarly, for the fourth-order scheme, we use the fourth-order entropy conservative flux \eqref{4thorder_x} and a fourth-order ENO reconstruction and denote the scheme by O4$\_$ES. Note that the extension to two dimensions is \rev{straightforward}.
		
		
		\subsection{Semi-discrete entropy stability}\label{subsec:semidisscheme}
		We now proceed to show that the scheme \eqref{scheme} with \rev{the} numerical flux \eqref{eq:entropy_stable_flux} is entropy stable.
		\begin{theorem}
			The semi-discrete schemes O2\_ES, O3\_ES, and O4\_ES designed above are entropy stable, i.e., they satisfy,
			\begin{equation*}
				\frac{d}{dt}  \eta(\con_{i,j})  +\frac{1}{\Delta x} \left( \hat{q^x}_{i+\frac{1}{2},j} - \hat{q^x}_{i-\frac{1}{2},j}\right)+\frac{1}{\Delta y}\left( \hat{q^y}_{i,j+\frac{1}{2}} - \hat{q^y}_{i,j-\frac{1}{2}}\right) \le 0,
				\label{eq:semi_disc_ent_inq}
			\end{equation*}
			where $ \hat{q^x}$ and $ \hat{q^y}$ are the consistent numerical entropy fluxes.
		\end{theorem}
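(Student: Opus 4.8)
The plan is to differentiate the discrete entropy along the scheme and exhibit the total entropy production in cell $(i,j)$ as a telescoping flux difference plus a sum of interface terms, each of which is non-positive. Write $\evar_{i,j}=\eta'(\con_{i,j})$ and use $\frac{d}{dt}\eta(\con_{i,j})=\evar_{i,j}^\top\frac{d}{dt}\con_{i,j}$. Substituting the semi-discrete scheme \eqref{scheme} with the numerical flux \eqref{eq:entropy_stable_flux} (and its $y$-analogue), the entropy balance splits into three groups of terms: the non-conservative contribution $\evar_{i,j}^\top\B^{nc}_{i,j}$; the contribution of the $2p$-th order entropy conservative fluxes ${\tilde{\fx}}^{2p}$, ${\tilde{\fy}}^{2p}$; and the contribution of the dissipative corrections $-\frac12\textbf{D}^x[\![ \tilde V^x]\!]^k$, $-\frac12\textbf{D}^y[\![ \tilde V^y]\!]^k$. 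I would treat the three groups in turn, the argument being uniform in the three schemes, which differ only in the value of $p$ and in whether the reconstruction used is minmod (O2\_ES) or ENO (O3\_ES, O4\_ES).

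First I would dispose of the non-conservative term. Since $\B^{nc}_{i,j}=\Bx(\con_{i,j})\big(\partial_x h\big)_{i,j}+\By(\con_{i,j})\big(\partial_y h\big)_{i,j}$ has pointwise matrix coefficients and, as established in Section~\ref{sec:entropy}, $\eta'(\con)\Bx(\con)=\eta'(\con)\By(\con)=0$, we obtain $\evar_{i,j}^\top\B^{nc}_{i,j}=0$ regardless of which consistent central difference approximates $\partial_x h$ and $\partial_y h$. Thus the non-conservative terms drop out of the discrete entropy balance entirely; this is the structural fact that lets the remainder of the proof proceed exactly as for a conservative system. For the entropy conservative part, I would invoke Theorem~\ref{thm:entconcond}: the two-point fluxes ${\tilde{\fx}}$, ${\tilde{\fy}}$ satisfy Tadmor's jump relations \eqref{tadmor_thm}, and by LeFloch's construction the $2p$-th order linear combinations (for instance \eqref{4thorder_x}--\eqref{4thorder_y}) again satisfy the corresponding jump relations. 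Hence the part of the scheme built from ${\tilde{\fx}}^{2p}$, ${\tilde{\fy}}^{2p}$ alone satisfies a discrete entropy conservation law with numerical entropy fluxes $\tilde{q^x}=\bar\evar^\top{\tilde{\fx}}^{2p}-\bar{\psi^x}$ and its $y$-analogue, contributing no entropy production.

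It remains to control the dissipative part. After telescoping, the extra production in cell $(i,j)$ is a sum of face contributions; the $x$-face contribution at $i+\frac12$ is, up to a positive constant and the symmetric remainder absorbed into $\hat{q^x}$, equal to $-[\![ \evar]\!]_{i+\frac12,j}^\top\textbf{D}^x_{i+\frac12,j}[\![ \tilde V^x]\!]^k_{i+\frac12,j}$. Using $\textbf{D}^x_{i+\frac12,j}=\tilde R^x_{i+\frac12,j}\Lambda^x_{i+\frac12,j}\tilde R^{x\top}_{i+\frac12,j}$ together with $\tilde V^{x,\pm}_{i,j}=\{\tilde R^{x\top}_{i\pm\frac12,j}\}^{-1}\tilde{\mathcal{V}}^{x,\pm}_{i,j}$, and the fact that the two one-sided traces at the face $i+\frac12$ are formed with the \emph{same} interface scaling matrix, one gets $\textbf{D}^x[\![ \tilde V^x]\!]^k=\tilde R^x\Lambda^x[\![ \tilde{\mathcal{V}}^x]\!]^k$ and $[\![ \evar]\!]^\top\tilde R^x=[\![ \mathcal{V}^x]\!]^\top$, so the contribution equals $-\sum_{\ell=1}^{6}(\Lambda^x)_{\ell\ell}\,[\![ \mathcal{V}^x]\!]_\ell\,[\![ \tilde{\mathcal{V}}^x]\!]^k_\ell$. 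Since $\Lambda^x$ is a nonnegative diagonal matrix and the reconstruction of the scaled entropy variables is sign-preserving componentwise (minmod for O2\_ES, ENO for O3\_ES and O4\_ES), property \eqref{eq:recsign} gives $[\![ \mathcal{V}^x]\!]_\ell[\![ \tilde{\mathcal{V}}^x]\!]^k_\ell\ge 0$ for every $\ell$, so this contribution is $\le 0$; the $y$-faces are handled identically. Collecting the three groups yields the claimed inequality with $\hat{q^x}=\tilde{q^x}-\frac12\bar\evar^\top\textbf{D}^x[\![ \tilde V^x]\!]^k$ and the analogous $\hat{q^y}$, which are consistent with $q^x$, $q^y$.

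The step requiring the most care is this last one: the identities $\textbf{D}^x[\![ \tilde V^x]\!]^k=\tilde R^x\Lambda^x[\![ \tilde{\mathcal{V}}^x]\!]^k$ and $[\![ \evar]\!]^\top\tilde R^x=[\![ \mathcal{V}^x]\!]^\top$ rely on the scaled-eigenvector property $\partial\con/\partial\evar=\tilde R^x\tilde R^{x\top}$ derived in Appendix~\ref{scaledrev} and on using a single interface scaling matrix for both one-sided reconstructions, after which the sign-preserving property of the reconstruction closes the estimate. The two-dimensional case introduces nothing new beyond applying the one-dimensional argument in each coordinate direction.
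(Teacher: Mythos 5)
Your proposal is correct and follows essentially the same route as the paper's proof: the discrete entropy balance is split into the entropy-conservative part (which telescopes by Tadmor's/LeFloch's jump conditions), the dissipative part (non-positive face-by-face via $\textbf{D}=\tilde R\Lambda\tilde R^\top$, the single interface scaling matrix, and the sign property \eqref{eq:recsign}), and the non-conservative part (which vanishes since $\evar^\top\Bx=\evar^\top\By=0$). You in fact supply more of the intermediate algebra for the scaled-entropy-variable identities than the paper, which simply cites \cite{fjordholm2012arbitrarily,tadmor1987numerical} for the corresponding equality.
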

		\begin{proof}
			Following \cite{fjordholm2012arbitrarily,tadmor1987numerical}, we have
			\begin{align*}
				& \frac{d}{dt}  \eta(\con_{i,j})  +\frac{1}{\Delta x} \left( \hat{q^x}_{i+\frac{1}{2},j} - \hat{q^x}_{i-\frac{1}{2},j}\right)+\frac{1}{\Delta y}\left( \hat{q^y}_{i,j+\frac{1}{2}} - \hat{q^y}_{i,j-\frac{1}{2}}\right) \\
				= & - \half \left(  \bm{\mathcal{V}}_{i,j} - \bm{\mathcal{V}}_{i-1,j} \right)^\top \Lambda^x_{i-1/2,j} \left( \bm{\mathcal{V}}^{x,-}_{i,j} - \bm{\mathcal{V}}^{x,+}_{i-1,j} \right) \\
				& - \half \left(  \bm{\mathcal{V}}_{i+1,j} - \bm{\mathcal{V}}_{i,j} \right)^\top \Lambda^x_{i+1/2,j} \left( \bm{\mathcal{V}}^{x,-}_{i+1,j} - \bm{\mathcal{V}}^{x,+}_{i,j} \right) \\
				& - \half \left(  \bm{\mathcal{V}}_{i,j} - \bm{\mathcal{V}}_{i,j-1} \right)^\top \Lambda^y_{i,j-1/2} \left( \bm{\mathcal{V}}^{y,-}_{i,j} - \bm{\mathcal{V}}^{y,+}_{i,j-1} \right) \\
				& - \half \left(  \bm{\mathcal{V}}_{i,j+1} - \bm{\mathcal{V}}_{i,j} \right)^\top \Lambda^y_{i,j+1/2} \left( \bm{\mathcal{V}}^{y,-}_{i,j+1} - \bm{\mathcal{V}}^{y,+}_{i,j} \right) \\
				& -\evar_{i,j}^\top\B^x_{i,j}\left(\frac{\partial h}{\partial x}\right)_{i,j}-\evar_{i,j}^\top\B^y_{i,j}\left(\frac{\partial h}{\partial y}\right)_{i,j} \\
				\rev{= & - \half \left(  \bm{\mathcal{V}}_{i,j} - \bm{\mathcal{V}}_{i-1,j} \right)^\top \Lambda^x_{i-1/2,j} \left( \bm{\mathcal{V}}^{x,-}_{i,j} - \bm{\mathcal{V}}^{x,+}_{i-1,j} \right) \\
				& - \half \left(  \bm{\mathcal{V}}_{i+1,j} - \bm{\mathcal{V}}_{i,j} \right)^\top \Lambda^x_{i+1/2,j} \left( \bm{\mathcal{V}}^{x,-}_{i+1,j} - \bm{\mathcal{V}}^{x,+}_{i,j} \right) \\
				& - \half \left(  \bm{\mathcal{V}}_{i,j} - \bm{\mathcal{V}}_{i,j-1} \right)^\top \Lambda^y_{i,j-1/2} \left( \bm{\mathcal{V}}^{y,-}_{i,j} - \bm{\mathcal{V}}^{y,+}_{i,j-1} \right) \\
				& - \half \left(  \bm{\mathcal{V}}_{i,j+1} - \bm{\mathcal{V}}_{i,j} \right)^\top \Lambda^y_{i,j+1/2} \left( \bm{\mathcal{V}}^{y,-}_{i,j+1} - \bm{\mathcal{V}}^{y,+}_{i,j} \right)\;\; (\text{Using } \evar_{i,j}^\top\B^x_{i,j}=\evar_{i,j}^\top\B^y_{i,j}=0)}
			\end{align*}
			\rev{Using the sign property~\eqref{eq:recsign} of the reconstruction process, the jumps in scaled entropy variables and their reconstructed jumps have the same signs. Also, matrices $\Lambda^x$ are $\Lambda^y$ are diagonal with positive entry. Hence, each term on the right side of the above equality is negative. This results in the inequality \eqref{eq:semi_disc_ent_inq}.}
		\end{proof}
		
		The general semi-discrete finite difference scheme for the system \eqref{eq:ssw1} has the following form,
		\begin{eqnarray}
			\frac{d }{dt}\con_{i,j}(t)
			+
			\frac{1}{\Delta x}\left({\fx}_{i+\frac{1}{2},j}(t)-{\fx}_{i-\frac{1}{2},j}(t)\right)\nonumber \\
			+
			\frac{1}{\Delta y}  \left({\fy}_{i,j+\frac{1}{2}}(t)-{\fy}_{i,j-\frac{1}{2}}(t)\right)+\B^{nc}_{i,j}(\con)=\s_{i,j}, \label{scheme1}
		\end{eqnarray}
		where $\s_{i,j}=\s(\con_{i,j})$. Then, we have the following remark:
		
		\begin{remark}
			The semi-discrete scheme \eqref{scheme1} with the numerical flux \eqref{eq:entropy_stable_flux} satisfies \rev{the} following inequality,
			\begin{equation*}
				\frac{d}{dt}  \eta(\con_{i,j})  +\frac{1}{\Delta x} \left( \hat{q^x}_{i+\frac{1}{2},j} - \hat{q^x}_{i-\frac{1}{2},j}\right)+\frac{1}{\Delta y}\left( \hat{q^y}_{i,j+\frac{1}{2}} - \hat{q^y}_{i,j-\frac{1}{2}}\right) \le 4 \alpha_{i,j}|\bm{v}_{i,j}|^3.
			\end{equation*}
			Following \cite{fjordholm2012arbitrarily,tadmor1987numerical}, we have
			\rev{\begin{align*}
				& \frac{d}{dt}  \eta(\con_{i,j})  +\frac{1}{\Delta x} \left( \hat{q^x}_{i+\frac{1}{2},j} - \hat{q^x}_{i-\frac{1}{2},j}\right)+\frac{1}{\Delta y}\left( \hat{q^y}_{i,j+\frac{1}{2}} - \hat{q^y}_{i,j-\frac{1}{2}}\right) \\
				\le & -\bm{V}_{i,j}\top\B^x_{i,j}\left(\frac{\partial h}{\partial x}\right)_{i,j}-\evar_{i,j}^\top\B^y_{i,j}\left(\frac{\partial h}{\partial y}\right)_{i,j}+\evar_{i,j}^\top\s_{i,j}(\con), & \\
			\rev{	\le & \evar_{i,j}^\top\s_{i,j}(\con)=4 \alpha_{i,j}|\bm{v}_{i,j}|^3.}
			\end{align*}}
		\end{remark}

\section{Fully discrete scheme}
		\label{sec:fulld}
		Let the initial time be $t^0$ and let $\con^n$ be the discrete solution at time $t^n$. The semi-discrete scheme~\eqref{scheme} can be expressed as
		\begin{equation}
			\frac{d }{dt}\con_{i,j}(t)
			=
			\mathcal{L}_{i,j}(\con(t)) -{\B^x(\con_{i,j}(t))}\left(\frac{\partial h}{\partial x}\right)_{i,j}-{\B^y(\con_{i,j}(t))}\left(\frac{\partial h}{\partial y}\right)_{i,j}+\s(\con_{i,j}(t))
			\label{fullydiscrete}
		\end{equation}
		where,
		\begin{equation*}
			\mathcal{L}_{i,j}(\con(t))
			=
			- \frac{1}{\Delta x}  \left(\mathbf{F}_{i+\frac{1}{2},j}^x(t)-\mathbf{F}_{i-\frac{1}{2},j}^x(t)\right)
			-
			\frac{1}{\Delta y}  \left(\mathbf{F}_{i,j+\frac{1}{2}}^y(t)-\mathbf{F}_{i,j-\frac{1}{2}}^y(t)\right).
		\end{equation*}
		The spatial derivatives are approximated using central differencing of suitable order (see section \eqref{sec:res}). The system of ODE~\eqref{fullydiscrete} can be integrated in time in several ways and we use explicit time discretization.
		%
		
		\subsection{Explicit schemes}
		We use explicit strong stability preserving Runge Kutta (SSP-RK) methods explained in \cite{gottlieb2001strong} for the time discretization of the SSW model. The second and third-order accurate SSP-RK schemes have the following structure for one time step.
		
		\begin{enumerate}
			\item Set $\con^{0} \ = \ \con^n$.
			\item For $k$ in $\{1,\dots,m+1\}$, compute
			{\small
			\begin{eqnarray*}
				\con_{i,j}^{(k)} \
				= \
				\sum_{l=0}^{k-1}\gamma_{kl}\con_{i,j}^{(l)}
				+
				\delta_{kl}\Delta t \Big(\mathcal{L}_{i,j}(\con^{(l)})-{\B^x}(\con_{i,j}^{(l)})\left(\frac{\partial h}{\partial x}\right)_{i,j}-{\B^y(\con_{i,j}^{(l)})}\left(\frac{\partial h}{\partial y}\right)_{i,j} + \s(\con_{i,j}^{(l)}) \Big),
			\end{eqnarray*}}
			where $\gamma_{kl}$ and $\delta_{kl}$ are given in Table~\eqref{table:ssp}.
			\item Finally, $\con_{i,j}^{n+1} \ =  \ \con_{i,j}^{(m+1)}$.
		\end{enumerate}
		
		\begin{table}[h]
			\caption[h]{Coefficients for Explicit SSP-Runge-Kutta time stepping:}
			\centering
			\begin{tabular}{ l|ccc|ccc }
				\hline
				Order              & \multicolumn{3}{|c|}{$\gamma_{il}$} & \multicolumn{3}{|c}{$\delta_{il}$}                       \\
				\hline
				\multirow{2}{*}{2} & 1                                   &                                   &     & 1 &     &     \\
				& 1/2                                 & 1/2                               &     & 0 & 1/2 &     \\
				\hline
				\multirow{3}{*}{3} & 1                                   &                                   &     & 1 &     &     \\
				& 3/4                                 & 1/4                               &     & 0 & 1/4 &     \\
				& 1/3                                 & 0                                 & 2/3 & 0 & 0   & 2/3 \\
				\hline
			\end{tabular}
			\label{table:ssp}
		\end{table}
		The fourth order RK-SSP scheme \cite{gottlieb2001strong} has the following structure:
		\begin{subequations}
			\begin{align*}
				\con^{(1)} &= \textbf{U}^n + 0.39175222700392 \Delta t \big(\mathcal{M}(\con^n) \big) \\
				\con^{(2)} &= 0.44437049406734 \con^n + 0.55562950593266 \con^{(1)}+0.36841059262959  \Delta t \big( \mathcal{M}(\con^1) \big) \\
				\con^{(3)} &= 0.62010185138540 \con^n + 0.37989814861460 \con^{(2)} +0.25189177424738  \Delta t \big( \mathcal{M}(\con^2) \big) \\
				\con^{(4)} &= 0.17807995410773 \con^n + 0.82192004589227 \con^{(3)} + 0.54497475021237  \Delta t \big( \mathcal{M}(\con^3) \big) \\
				\con^{n+1} &= 0.00683325884039 \con^n + 0.51723167208978 \con^{(2)} + 0.12759831133288 \con^{(3)}\\
				&+ 0.34833675773694 \con^{(4)}+ 0.08460416338212  \Delta t \big( \mathcal{M}(\con^3) \big)\\
				&+ 0.22600748319395  \Delta t \big( \mathcal{M}(\con^4) \big).
			\end{align*}
		\end{subequations}
		where $\mathcal{M}(\con^n)=\mathcal{L}(\con^n)-{\B^x}(\con^n)\left(\frac{\partial h}{\partial x}\right)^n - {\B^y(\con^n)}\left(\frac{\partial h}{\partial y}\right)^n + \s(\con^n)$. Here, we have ignored the subscripts $\{i,j\}$.

		\section{Numerical results}\label{sec:res}
		We test the fully discrete schemes on some 1-D and 2-D test cases and present the results for O1\_ES, O2\_ES, O3\_ES, and O4\_ES schemes. Here,
		\begin{enumerate}
			\item O1\_ES: the Euler time-stepping with first-order entropy stable flux and second-order central difference approximation for the derivatives in the non-conservative terms.
			\item O2\_ES: the explicit second-order scheme with second-order entropy stable flux and second-order central difference approximation for the derivatives in the non-conservative terms.
			\item O3\_ES: the third-order explicit scheme with third-order entropy stable flux and fourth-order central difference approximation for the derivatives in the non-conservative terms.
			\item O4\_ES: the fourth-order explicit SSP RK scheme with fourth-order entropy stable flux and fourth-order central difference approximation for the derivatives in the non-conservative terms.
		\end{enumerate}
		\rev{We take the acceleration due to gravity as $g = 9.81 \ m/s^2$. To compute the time step, we use 
  $$
 \Delta t =\text{CFL}\frac{1}{\max_{ij} \left( \frac{|\lambda^x(\con^n_{i,j})|}{\Delta x} + \frac{|\lambda^y(\con^n_{i,j})|}{\Delta y} \right)},
  $$
from \cite{Chandrashekar2020}. Here $\lambda^x$ and $\lambda^y$ are the maximum eigenvalues in $x$ and $y$ directions, respectively. We take CFL to be $0.45$.}

\rev{For the Riemann problem test, we consider the Neumann boundary conditions at both boundaries. In effect, we copy the value in the last cell to the ghost cells. The final time is chosen in all the Riemann problem test cases so the waves do not reach the boundary.}  

		We set the source term $\s$ to be zero for all the test cases except for the 1-D roll wave test in Section~\ref{test11} and the 2-D roll wave test in Section~\ref{test13}.

		
		\subsection{One-dimensional test problems}
		\subsubsection{Accuracy test}\label{test:accuracy}
		We consider the shear shallow water model without source term $(\s=0)$ but instead, add an artificial source term $\mathcal{S}$ so that we can manufacture an exact solution. Following~\cite{biswas2021entropy}, we add the forcing term $\mathcal{S}(x,t)$ in the right-hand side of the SSW model as follows,
		\begin{equation*}
			\frac{\partial \con}{\partial t}+\frac{\partial\fx}{\partial x} +\B^x\frac{\partial h}{\partial x}= \mathcal{S}(x,t),
		\end{equation*}
		where,
		\begin{eqnarray*}
			\mathcal{S}(x,t)=\left(0,2\pi\cos(2\pi(x-t))(1+2 g+g\sin(2 \pi (x-t))), 0, \right.\\ \left. 2\pi\cos(2\pi(x-t))(1+2 g+g\sin(2 \pi (x-t))),0,0 \right)^\top.
		\end{eqnarray*} The computational domain is $[-0.5,\,0.5]$ with periodic boundary conditions. The exact solution is given by
		\begin{equation*}
			h(x,t)=2+\sin(2\pi (x-t)),\qquad  v_1(x,t)=1, \qquad v_2(x,t)=0,
		\end{equation*}
		\begin{equation*}
			\p_{11}(x,t)=\p_{22}(x,t)=1, \qquad \p_{12}(x,t)=0.
		\end{equation*}
		The computations are performed up to the final time $T=0.5$.
		\begin{table}[ht]
			\centering
			\begin{tabular}{c|c|c|c|c|c|c|}
				\hline Number of cells  & \multicolumn{2}{|c}{{O2\_ES}} &  \multicolumn{2}{|c}{O3\_ES} & \multicolumn{2}{|c}{O4\_ES}  \\
				\hline   & $L^1$ error  &  Order &  $L^1$ error      & Order & $L^1$ error      & Order \\
				\hline 50 & 4.58e-03 & -- & 2.26e-04 & -- & 1.92e-05 & -- \\
				100 & 1.39e-03 & 1.72 & 2.92e-05 & 2.94 &1.56e-06 & 3.62 \\
				200 & 4.67e-04 & 1.57 & 3.70e-06 & 2.98 & 1.14e-07 & 3.77 \\
				400 & 1.35e-04 & 1.79 & 4.63e-07 & 2.99 & 7.83e-09 & 3.86 \\
				800 & 3.67e-05 & 1.88 & 5.80e-08 & 2.99 & 5.32e-10 & 3.88 \\
				1600 & 9.71e-06 & 1.92 & 7.25e-09 & 2.99 & 4.17e-11 & 3.68 \\
				\hline
			\end{tabular}
			\caption{Accuracy test: $L^1$ errors and order of accuracy for the water depth $h$.}
			\label{tab:acc2}
		\end{table}
		We present the $L^1$ errors and order of accuracy for the water depth $h$ in Table~\ref{tab:acc2}  using the schemes O2\_ES, O3\_ES, and O4\_ES. We observe that the schemes have reached the designed order of accuracy.

		
		\subsubsection{Dam break problem}\label{test2}
		This is a Riemann problem from \cite{Nkonga2022}, which models a dam break problem. The domain is taken to be $[-0.5,0.5]$ with Neumann boundary conditions. The initial discontinuity is placed at $x=0$, and the initial conditions are given by
		{\small
		\begin{equation*}
			(h,\ v_1, v_2, \ \p_{11}, \p_{12}, \p_{22}) = \begin{cases}
				\big(0.02 ,\  0,\ 0, \ 4.0\times10^{-2}, \ 0, \ 4.0\times10^{-2} \big)  & \text{if } x < 0.0, \\
				\big(0.01,\ 0, \ 0, \ 4.0\times10^{-2}, \ 0, \ 4.0\times10^{-2} \big) & \text{if } x > 0.0.   \end{cases}
		\end{equation*}}
		The computations are performed up to the final time $T=0.5.$ The numerical solutions for the schemes O1\_ES, O2\_ES, O3\_ES, and O4\_ES at 500 and 2000 cells are presented in Fig.~\ref{fig:test2a}.
		We have plotted the water depth $h$, velocity $v_1$ and stress component $\p_{11}$. The numerical solution has been compared with the exact solution given in~\cite{Nkonga2022}. We can observe the convergence of the schemes. The result in Fig.~\ref{fig:test2b} \rev{shows} the entropy decay of the proposed numerical scheme.
		\begin{figure}
			\centering
			\begin{subfigure}[b]{0.45\textwidth}
				\includegraphics[width=\textwidth]{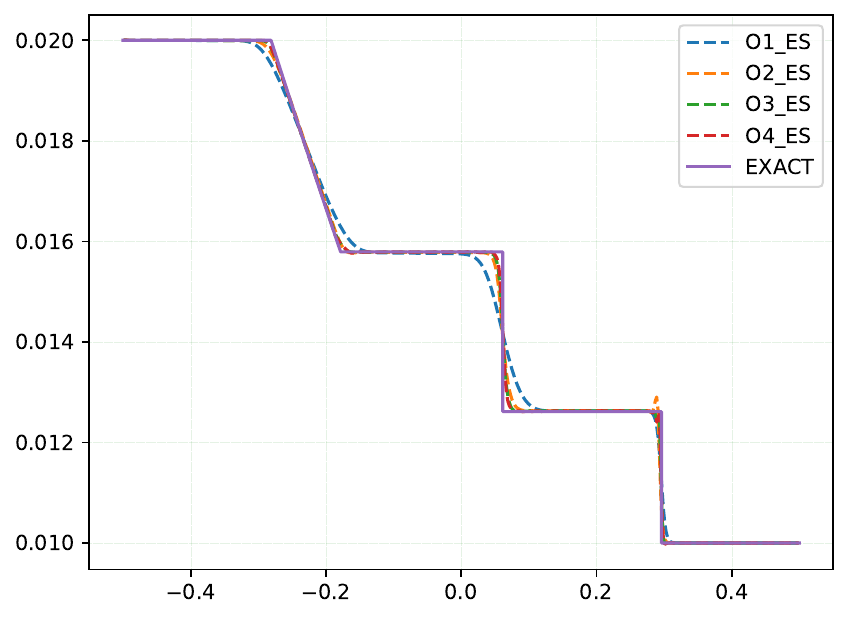}
				\caption{$h,~\text{500 cells}$}
			\end{subfigure}
			\begin{subfigure}[b]{0.45\textwidth}
				\includegraphics[width=\textwidth]{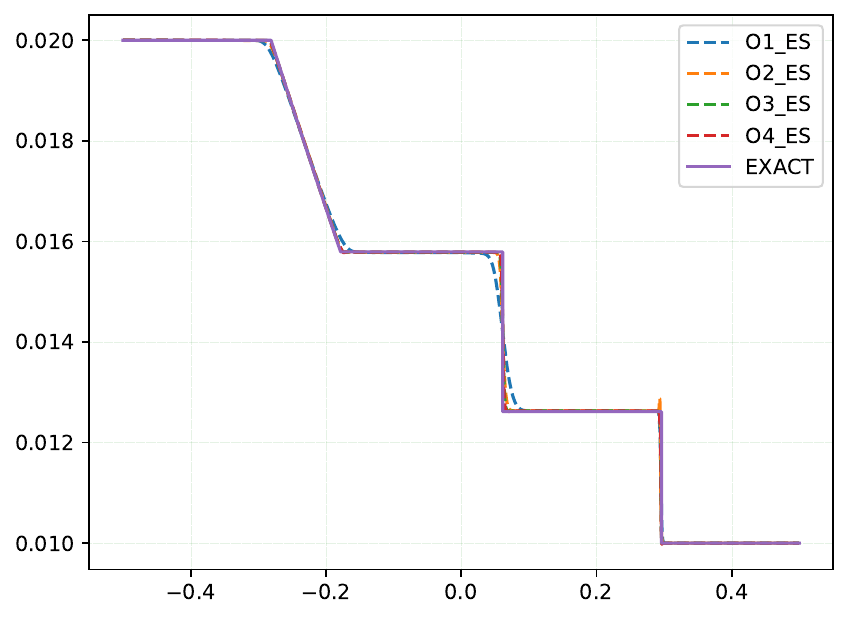}
				\caption{$h,~\text{2000 cells}$}
			\end{subfigure}
			\begin{subfigure}[b]{0.45\textwidth}
				\includegraphics[width=\textwidth]{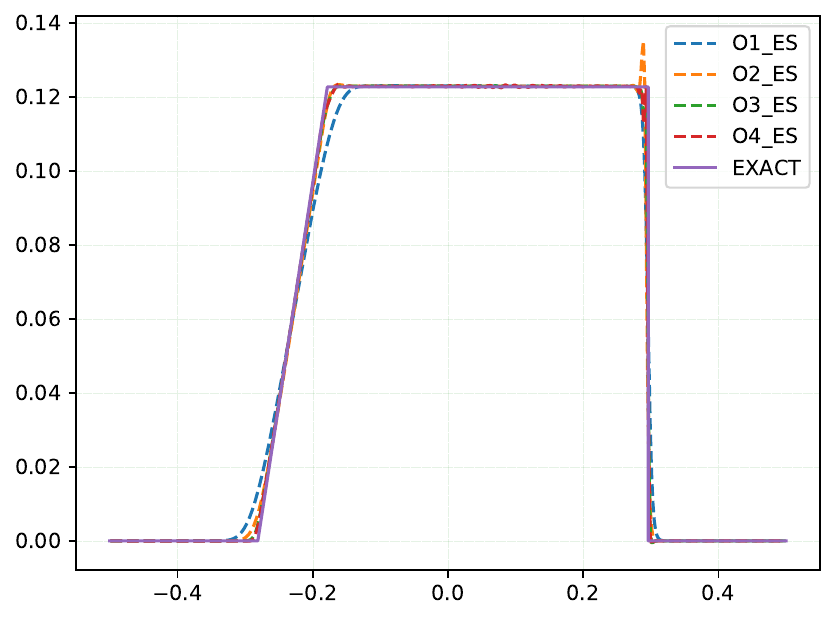}
				\caption{$v_1,~\text{500 cells}$}
			\end{subfigure}
			\begin{subfigure}[b]{0.45\textwidth}
				\includegraphics[width=\textwidth]{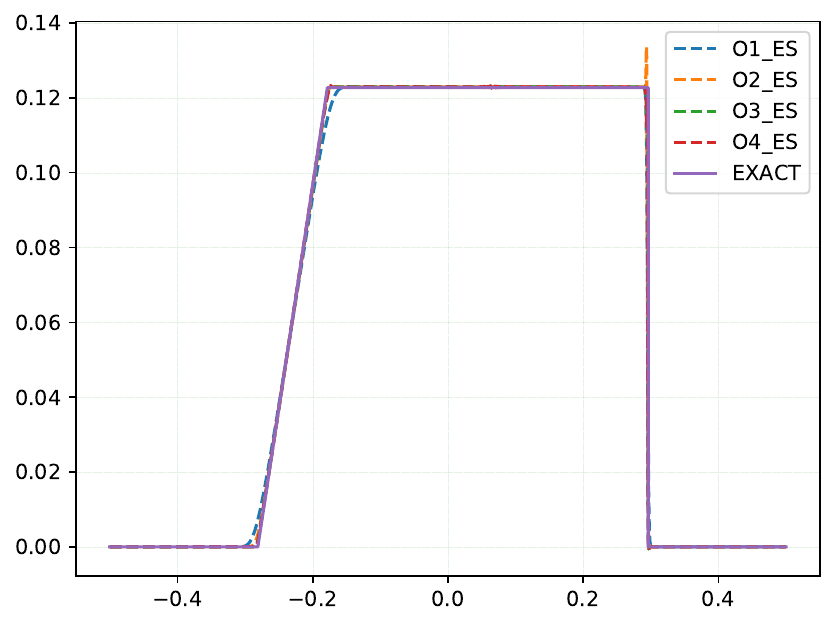}
				\caption{$v_1,~\text{2000 cells}$}
			\end{subfigure}
			\begin{subfigure}[b]{0.45\textwidth}
				\includegraphics[width=\textwidth]{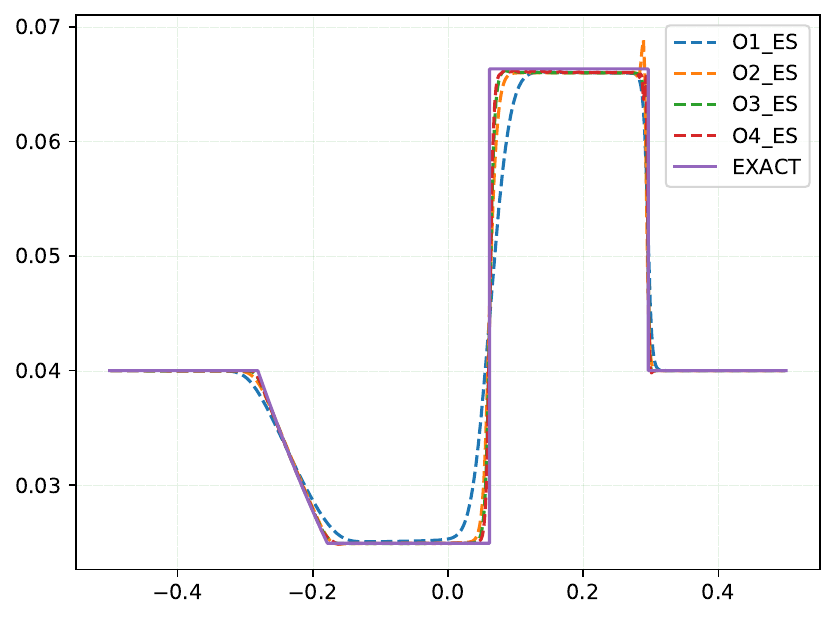}
				\caption{$\p_{11},~\text{500 cells}$}
			\end{subfigure}
			\begin{subfigure}[b]{0.45\textwidth}
				\includegraphics[width=\textwidth]{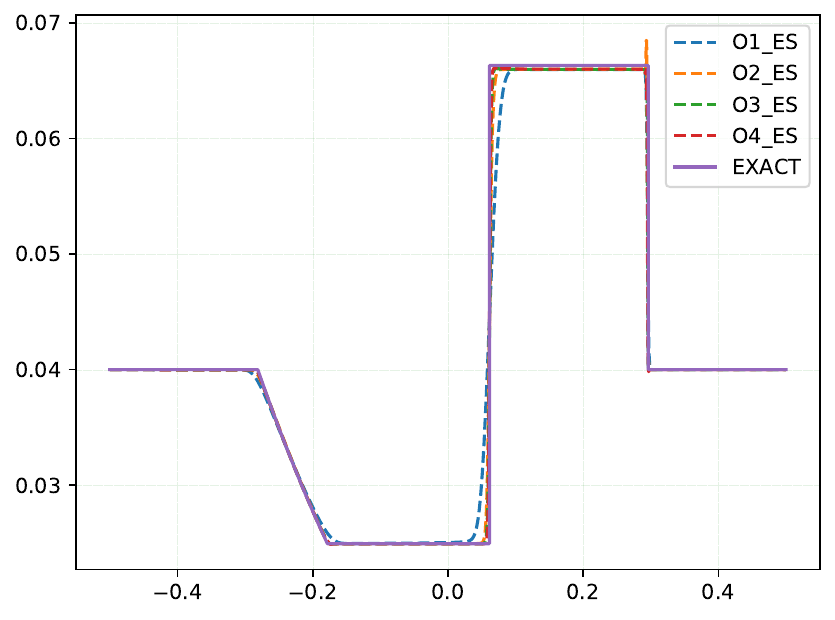}
				\caption{$\p_{11},~\text{2000 cells}$}
			\end{subfigure}
			\caption{\nameref{test2}: Plot of water depth $h$, velocity $v_1$ and stress tesnor component $\p_{11}$ using 500 and 2000 cells.}
			\label{fig:test2a}
		\end{figure}
		\begin{figure}
			\centering
			\begin{subfigure}[b]{0.45\textwidth}
				\includegraphics[width=\textwidth]{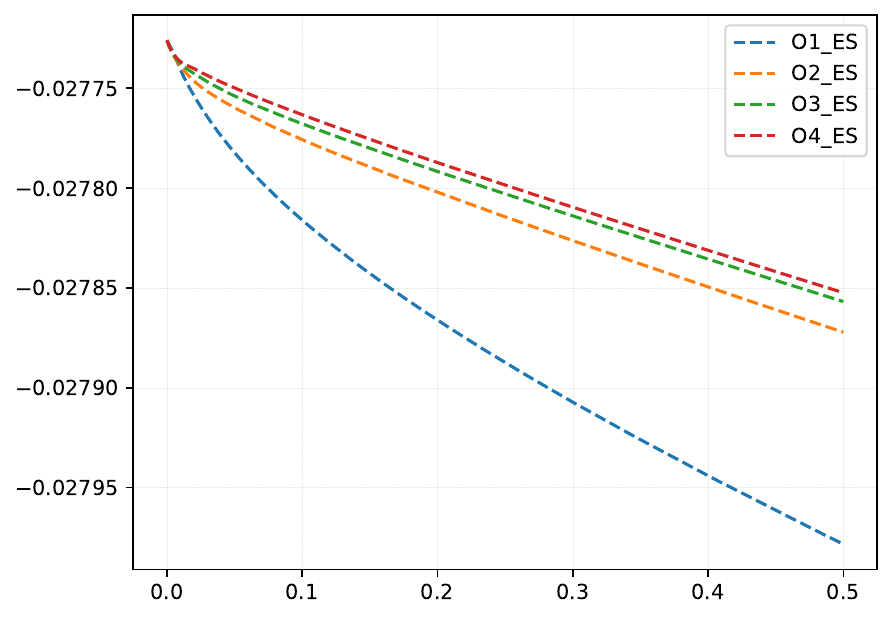}
				\caption{Entropy decay,~$\text{500 cells}$}
			\end{subfigure}
			\caption{\nameref{test2}: Plot of entropy decay using 500 cells.}
			\label{fig:test2b}
		\end{figure}
		

	Next, we test another dam break problem~\cite{Nkonga2022}, where $\p_{12}$ is set to be $10^{-8}$, and the other initial conditions are kept \rev{the} same. The numerical solutions are presented in Fig.~\ref{fig:test3a} and Fig.~\ref{fig:test3b} using 500 and 2000 cells. In this test problem, along with the water depth $h$, velocity $v_1$, stress component $\p_{11}$, we have also plotted the stress component $\p_{12}$. The $\p_{12}$ profile is able to capture all the five waves of the SSW model. The numerical solution has been compared with the exact solution from~\cite{Nkonga2022}, and we note that all the schemes converge towards the exact solution. The result in Fig.~\ref{fig:test3c} shows the entropy decay for the different numerical schemes using 500 cells; all schemes show monotonic decay of total entropy, with higher-order schemes showing smaller decay.
	\begin{figure}
		\centering
		\begin{subfigure}[b]{0.45\textwidth}
			\includegraphics[width=\textwidth]{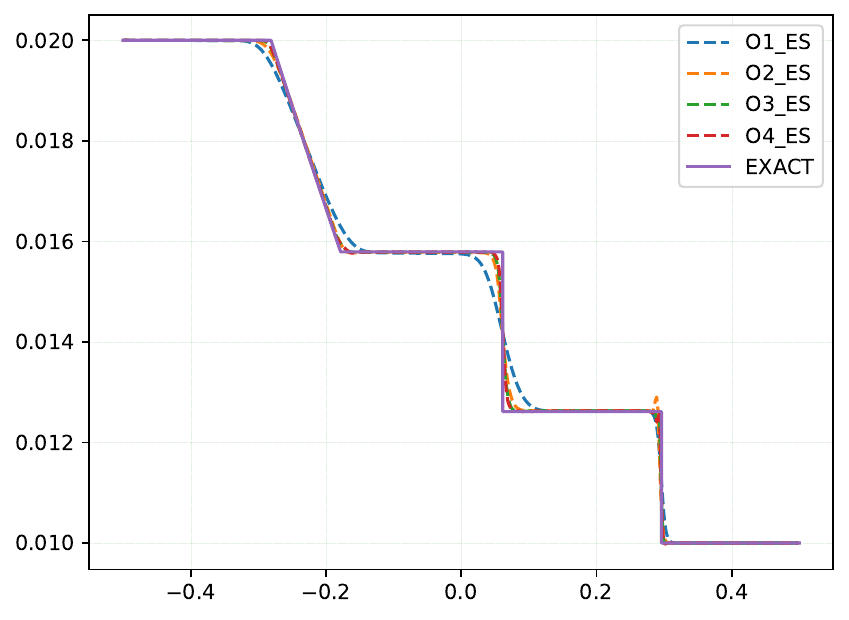}
			\caption{$h,~\text{500 cells}$}
		\end{subfigure}
		\begin{subfigure}[b]{0.45\textwidth}
			\includegraphics[width=\textwidth]{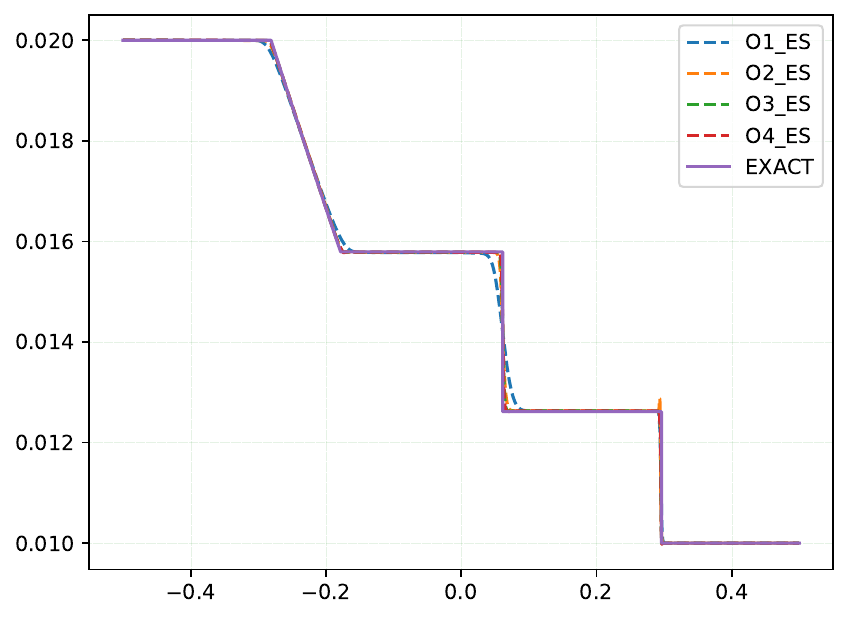}
			\caption{$h,~\text{2000 cells}$}
		\end{subfigure}
		\begin{subfigure}[b]{0.45\textwidth}
			\includegraphics[width=\textwidth]{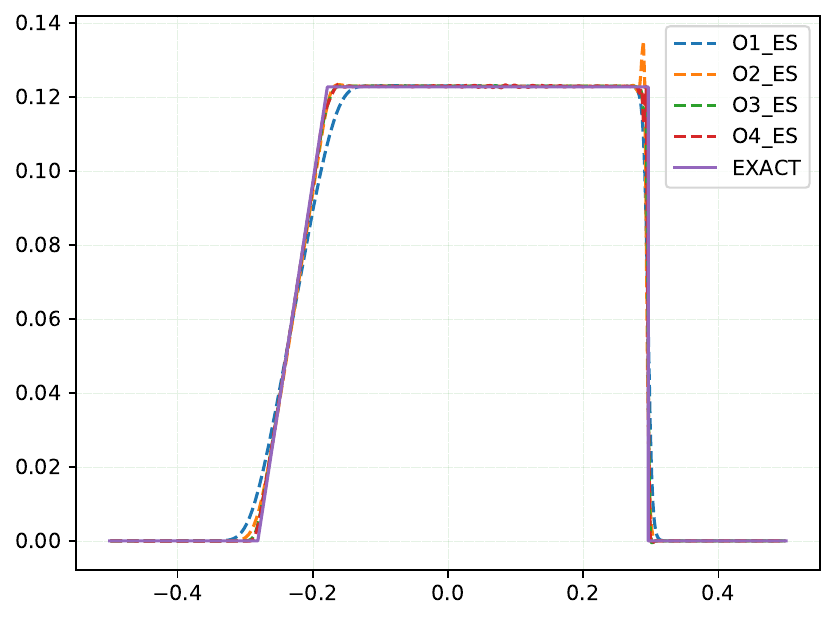}
			\caption{$v_1,~\text{500 cells}$}
		\end{subfigure}
		\begin{subfigure}[b]{0.45\textwidth}
			\includegraphics[width=\textwidth]{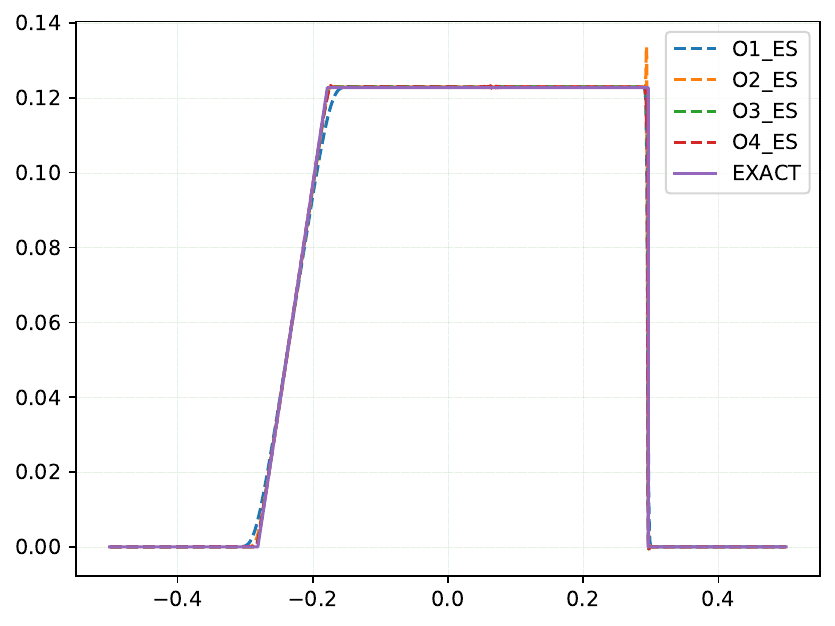}
			\caption{$v_1,~\text{2000 cells}$}
		\end{subfigure}
	\begin{subfigure}[b]{0.45\textwidth}
		\includegraphics[width=\textwidth]{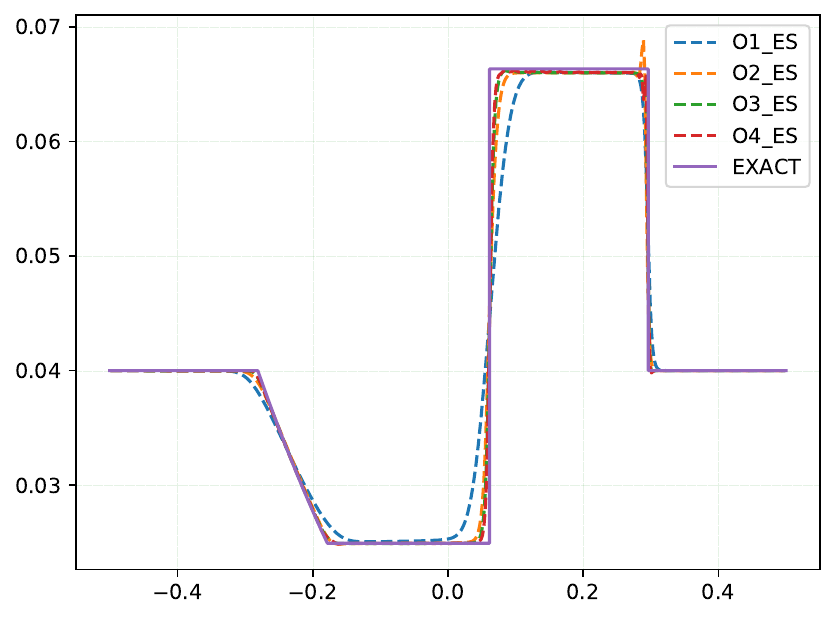}
		\caption{$\p_{11},~\text{500 cells}$}
	\end{subfigure}
	\begin{subfigure}[b]{0.45\textwidth}
		\includegraphics[width=\textwidth]{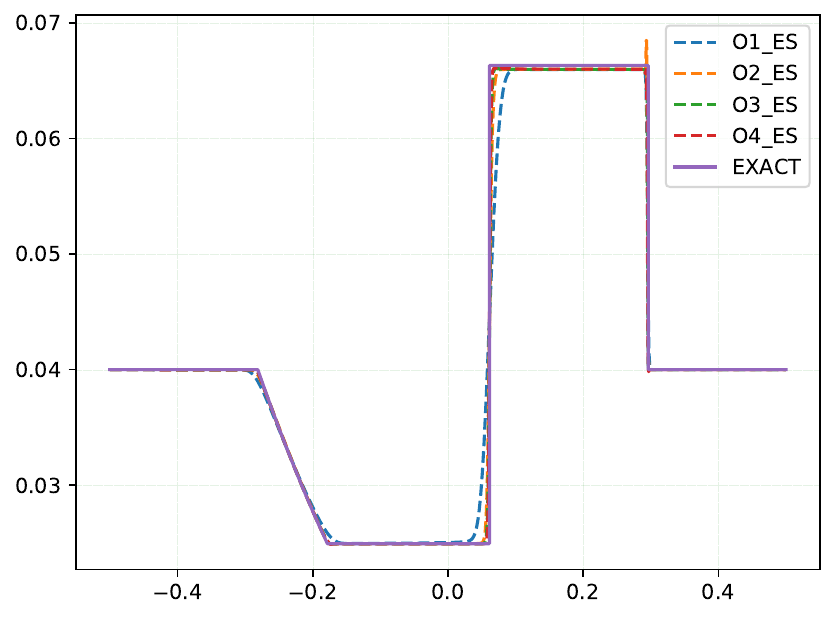}
		\caption{$\p_{11},~\text{2000 cells}$}
	\end{subfigure}
	\caption{\nameref{test2}: Plot of water depth $h$, velocity components $v_1$,  stress components $\p_{11}$ using 500 and 2000 cells.}
	\label{fig:test3a}
\end{figure}
\begin{figure}
	\centering
	\begin{subfigure}[b]{0.45\textwidth}
		\includegraphics[width=\textwidth]{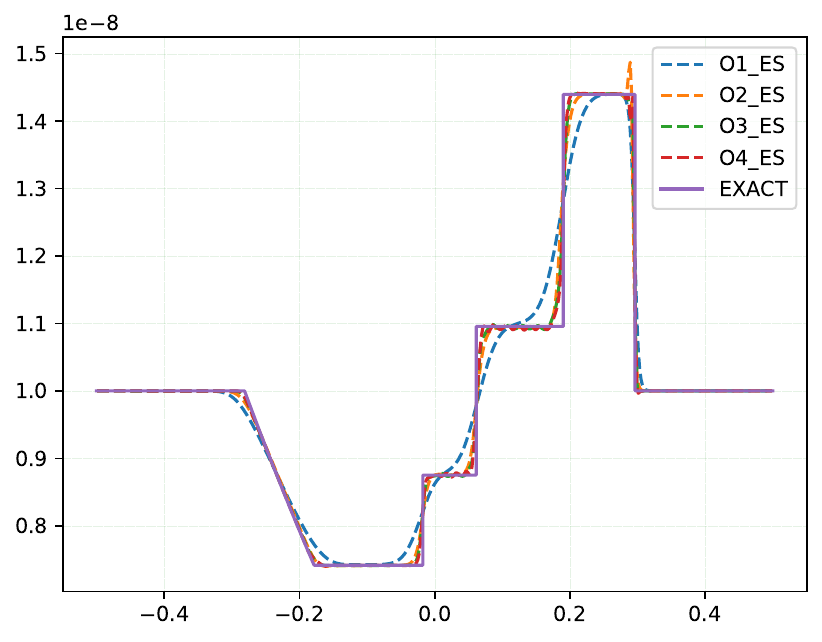}
		\caption{$\p_{12},~\text{500 cells}$}
	\end{subfigure}
	\begin{subfigure}[b]{0.45\textwidth}
		\includegraphics[width=\textwidth]{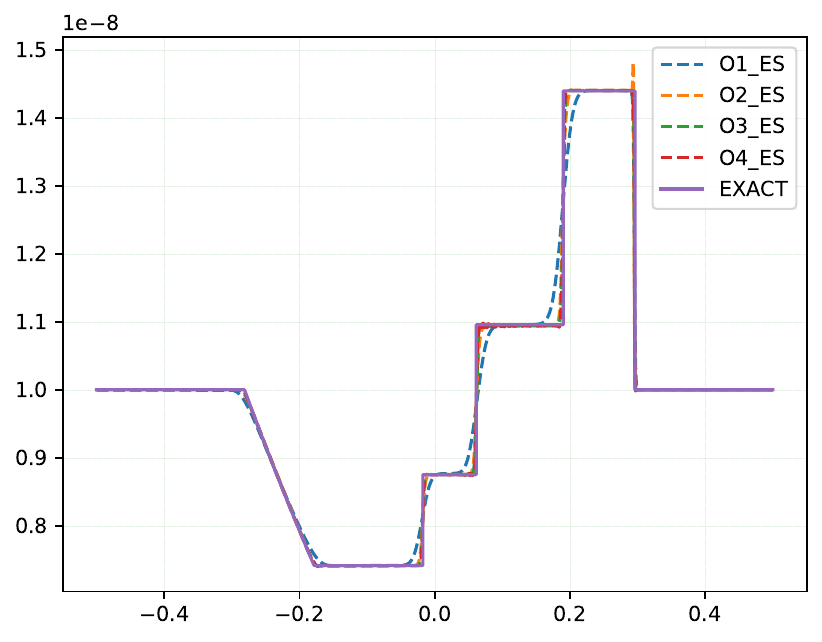}
		\caption{$\p_{12},~\text{2000 cells}$}
	\end{subfigure}
	\caption{\nameref{test2}: Plot of stress components $\p_{12}$ using 500 and 2000 cells.}
	\label{fig:test3b}
\end{figure}
\begin{figure}
	\centering
	\begin{subfigure}[b]{0.45\textwidth}
		\includegraphics[width=\textwidth]{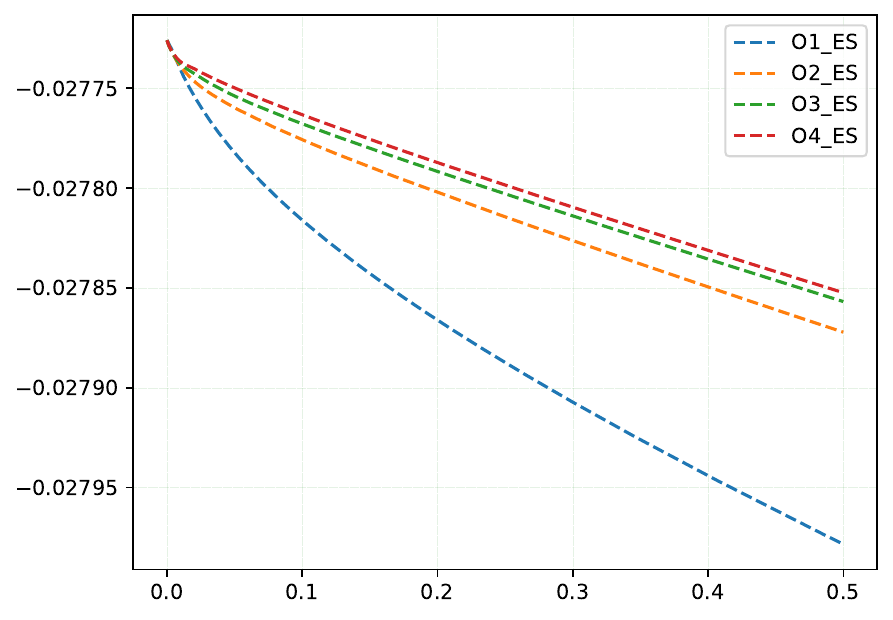}
		\caption{Entropy evolution, 500 cells}
	\end{subfigure}
	\caption{\nameref{test2}: Plot of entropy decay using 500 cells.}
	\label{fig:test3c}
\end{figure}

\subsubsection{Five wave dam break problem}\label{test6}
This is a Riemann problem~\cite{Chandrashekar2020,Nkonga2022}, which gives rise to all five waves in the solution. The computational domain is $[-0.5,0.5]$ with Neumann boundary conditions. The initial discontinuity is placed at $x=0$, and initial conditions are given by
{\small
\begin{equation*}
	(h,\ v_1, v_2, \ \p_{11}, \p_{12}, \p_{22}) = \begin{cases}
		\big(0.01 ,\  0.1,\ 0.2, \ 4.0\times10^{-2}, \ 10^{-8}, \ 4.0\times10^{-2} \big)  & \text{if } x < 0.0, \\
		\big(0.02,\ 0.1, \ -0.2, \ 4.0\times10^{-2}, \ 10^{-8}, \ 4.0\times10^{-2} \big) & \text{if } x > 0.0.   \end{cases}
\end{equation*}}
The numerical solutions are computed up to the final time $T=0.5$.
\begin{figure}
	\centering
	\begin{subfigure}[b]{0.45\textwidth}
		\includegraphics[width=\textwidth]{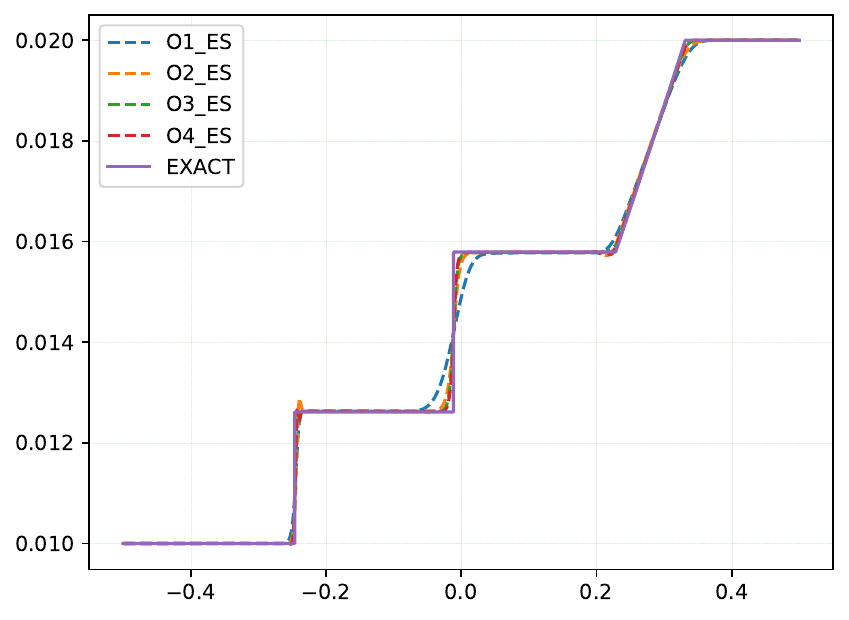}
		\caption{$h,~\text{200 cells}$}
	\end{subfigure}
	\begin{subfigure}[b]{0.45\textwidth}
		\includegraphics[width=\textwidth]{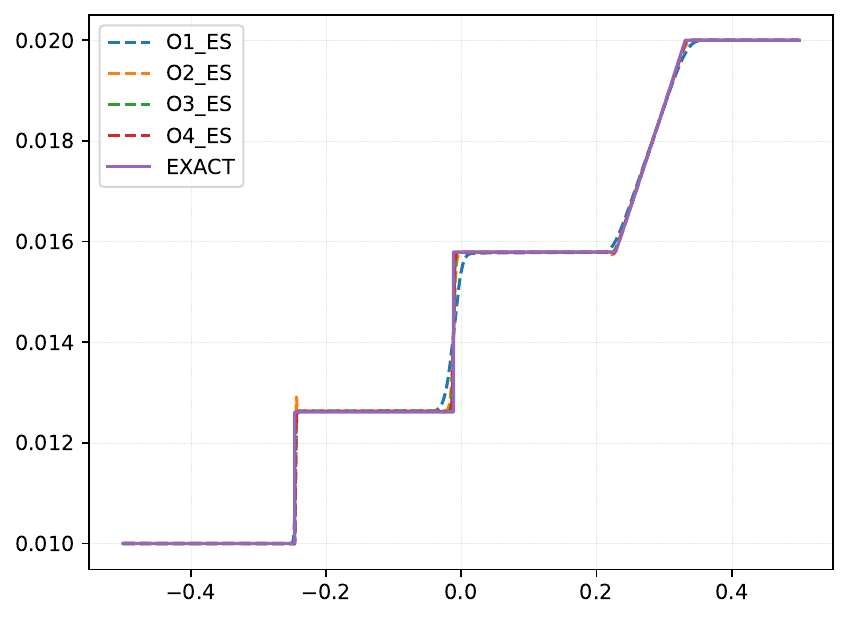}
		\caption{$h,~\text{2000 cells}$}
	\end{subfigure}
	\begin{subfigure}[b]{0.45\textwidth}
		\includegraphics[width=\textwidth]{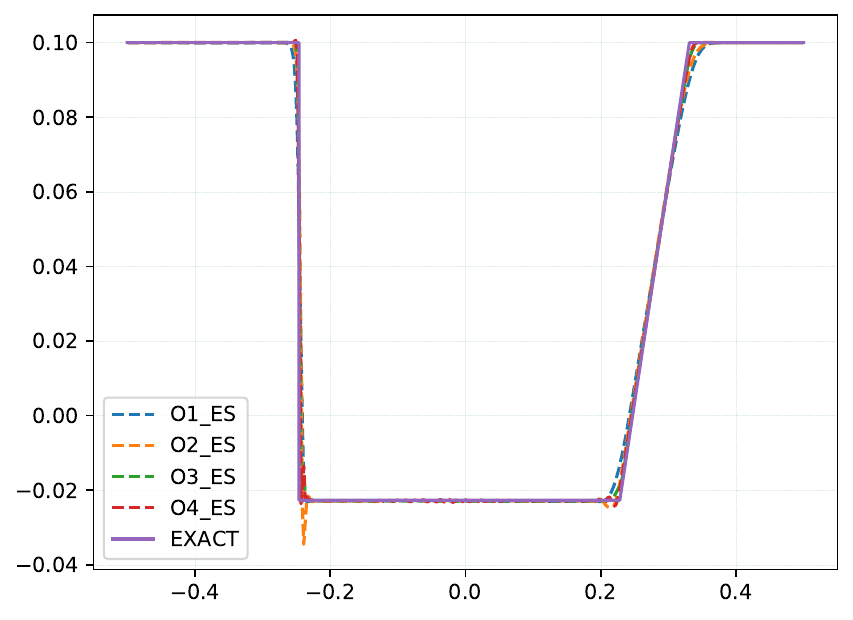}
		\caption{$v_1,~\text{200 cells}$}
	\end{subfigure}
	\begin{subfigure}[b]{0.45\textwidth}
		\includegraphics[width=\textwidth]{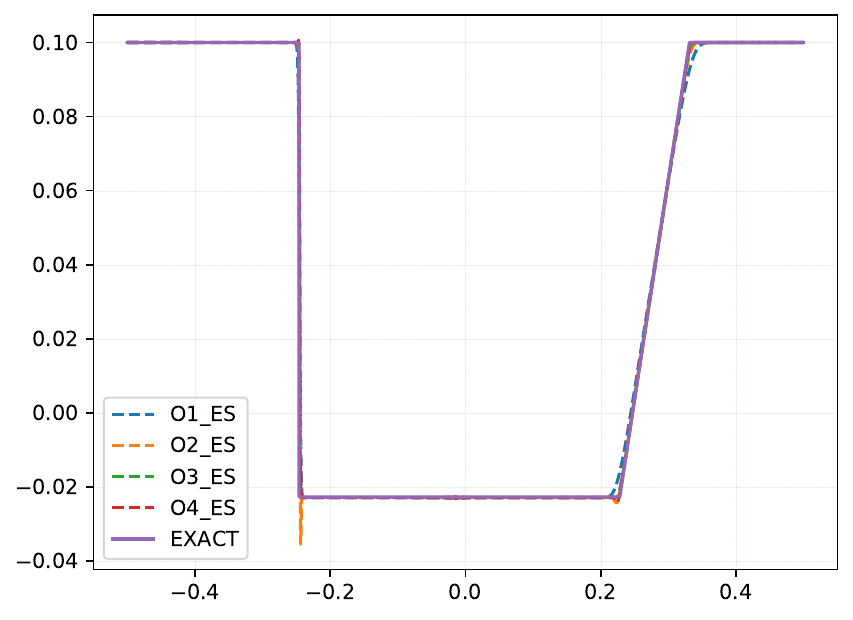}
		\caption{$v_1,~\text{2000 cells}$}
	\end{subfigure}
	\begin{subfigure}[b]{0.45\textwidth}
		\includegraphics[width=\textwidth]{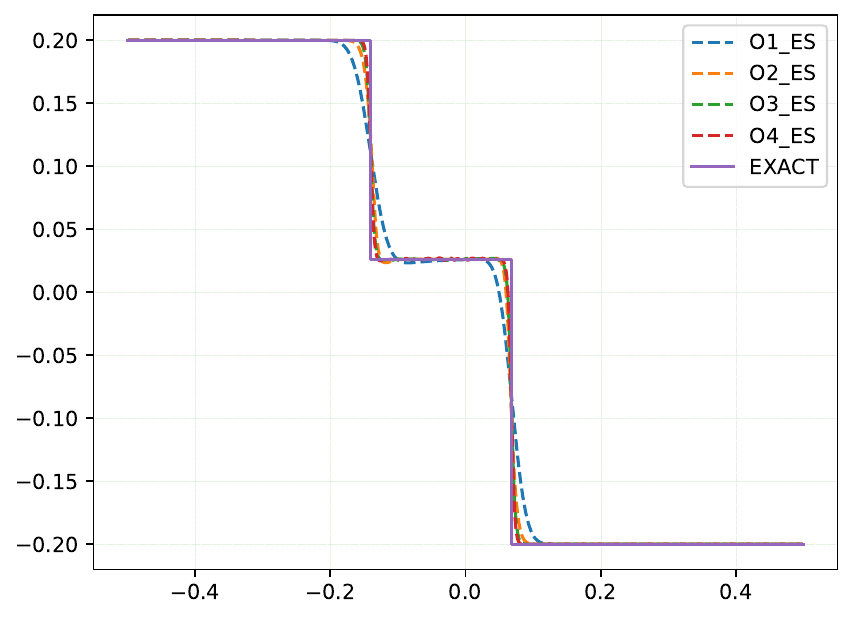}
		\caption{$v_2,~\text{200 cells}$}
	\end{subfigure}
	\begin{subfigure}[b]{0.45\textwidth}
		\includegraphics[width=\textwidth]{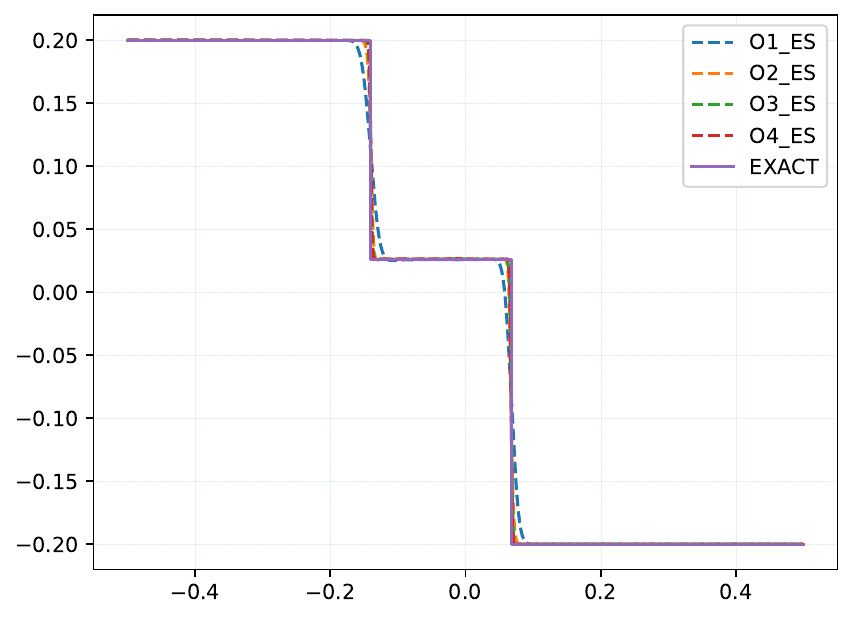}
		\caption{$v_2,~\text{2000 cells}$}
	\end{subfigure}
	\caption{\nameref{test6}: Plot of water depth $h$ and velocity components $v_1, ~v_2$ using 200 cells and 2000 cells.}
	\label{fig:test6a}
\end{figure}
We have plotted all the primitive variables in Fig.~\ref{fig:test6a} and Fig.~\ref{fig:test6b} obtained using 200 and 2000 cells. The numerical solutions have been compared with the exact solution~\cite{Nkonga2022}. We observe that the schemes O1\_ES, O2\_ES, O3\_ES, and O4\_ES converge toward \rev{the} exact solution. The result in Fig.~\ref{fig:test6c} shows the entropy decay for the different numerical schemes at 500 cells, which shows monotonic decay with time.
\begin{figure}
	\centering
	\begin{subfigure}[b]{0.45\textwidth}
		\includegraphics[width=\textwidth]{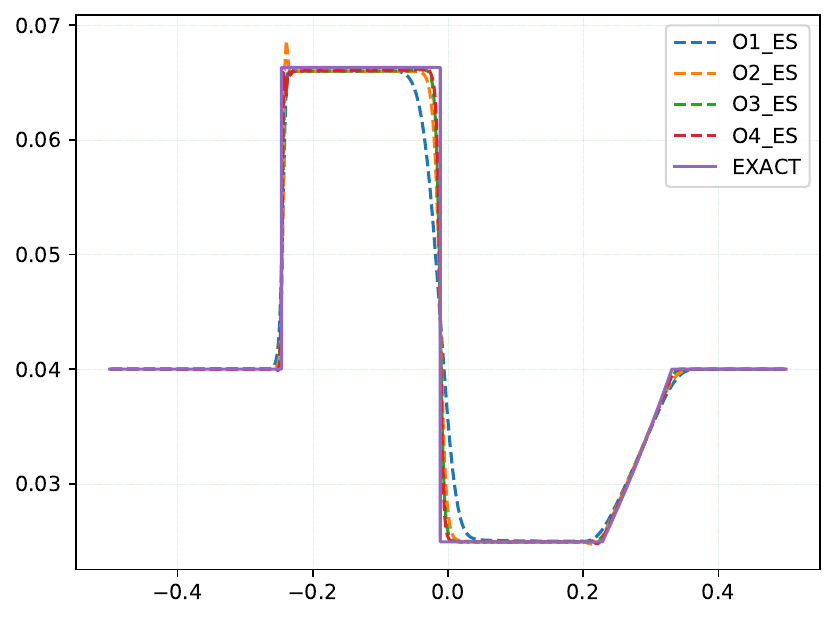}
		\caption{$\p_{11},~\text{200 cells}$}
	\end{subfigure}
	\begin{subfigure}[b]{0.45\textwidth}
		\includegraphics[width=\textwidth]{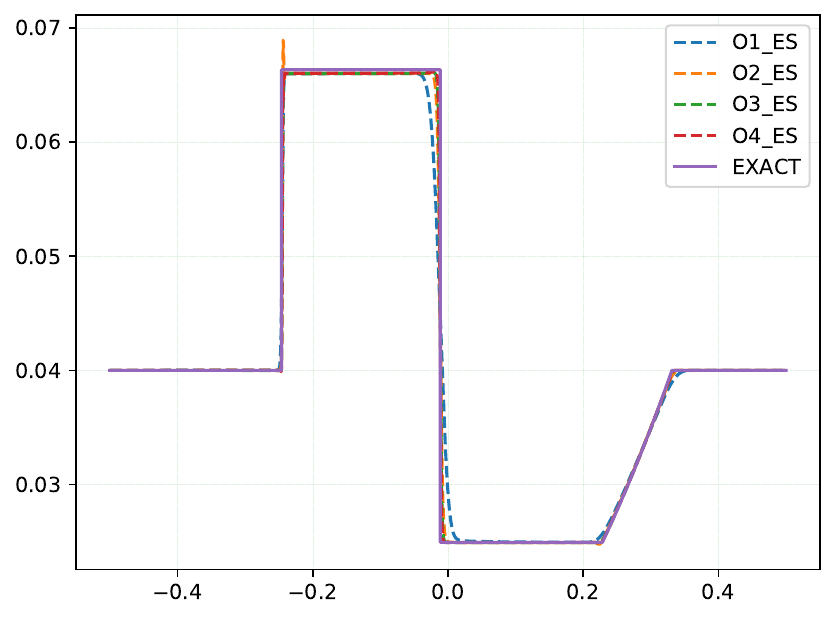}
		\caption{$\p_{11},~\text{2000 cells}$}
	\end{subfigure}
	\begin{subfigure}[b]{0.45\textwidth}
		\includegraphics[width=\textwidth]{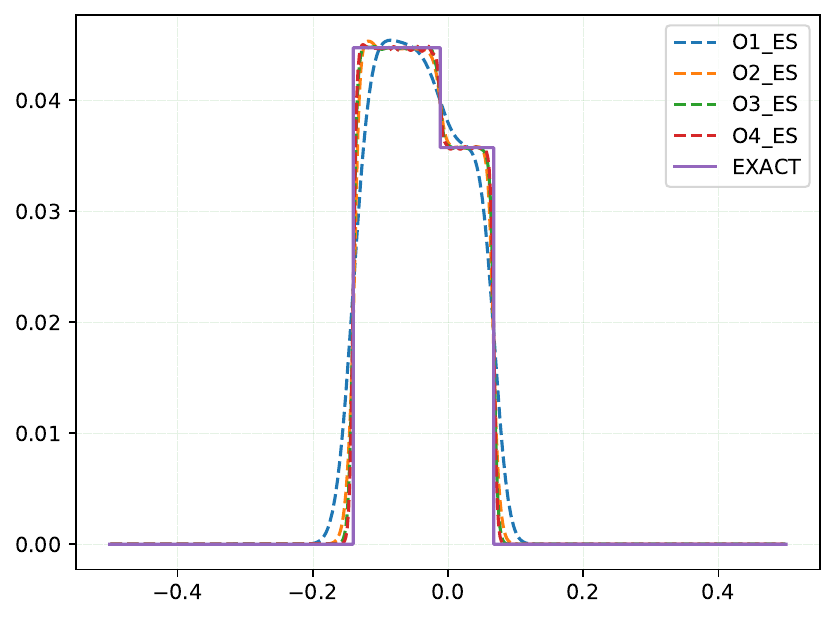}
		\caption{$\p_{12},~\text{200 cells}$}
	\end{subfigure}
	\begin{subfigure}[b]{0.45\textwidth}
		\includegraphics[width=\textwidth]{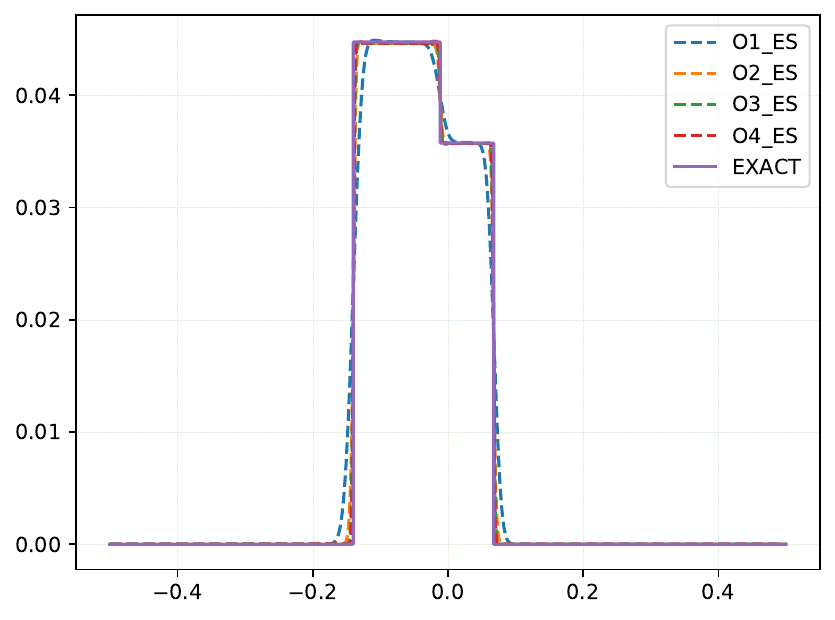}
		\caption{$\p_{12},~\text{2000 cells}$}
	\end{subfigure}
	\begin{subfigure}[b]{0.45\textwidth}
		\includegraphics[width=\textwidth]{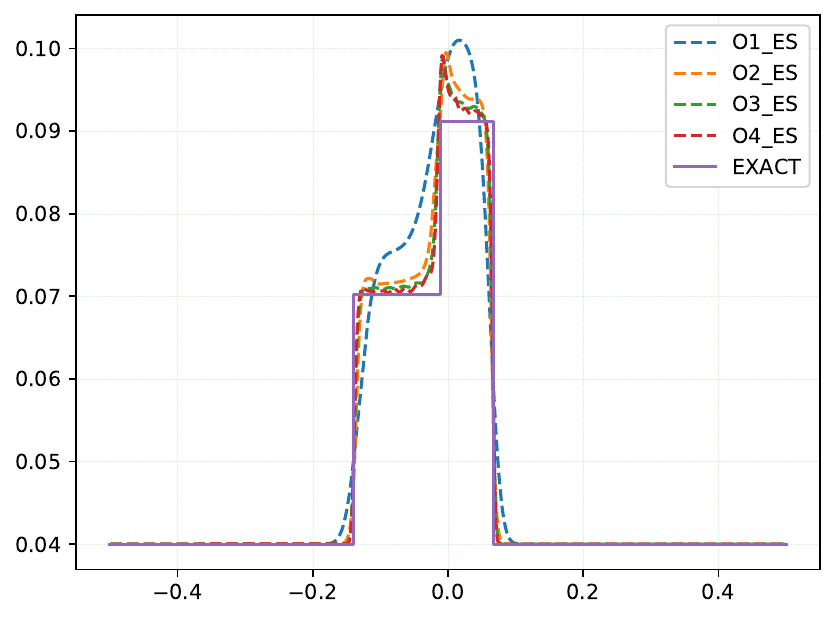}
		\caption{$\p_{22},~\text{200 cells}$}
	\end{subfigure}
	\begin{subfigure}[b]{0.45\textwidth}
		\includegraphics[width=\textwidth]{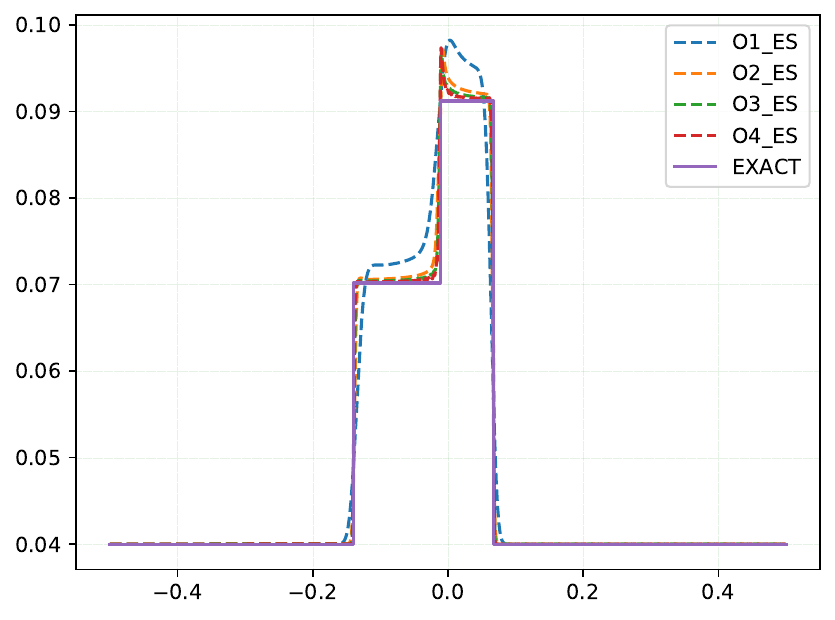}
		\caption{$\p_{22},~\text{2000 cells}$}
	\end{subfigure}
	\caption{\nameref{test6}: Plot of stress components $\p_{11},~\p_{12},~\p_{22}$ using 200 cells and 2000 cells.}
	\label{fig:test6b}
\end{figure}
\begin{figure}
	\centering
	\begin{subfigure}[b]{0.45\textwidth}
		\includegraphics[width=\textwidth]{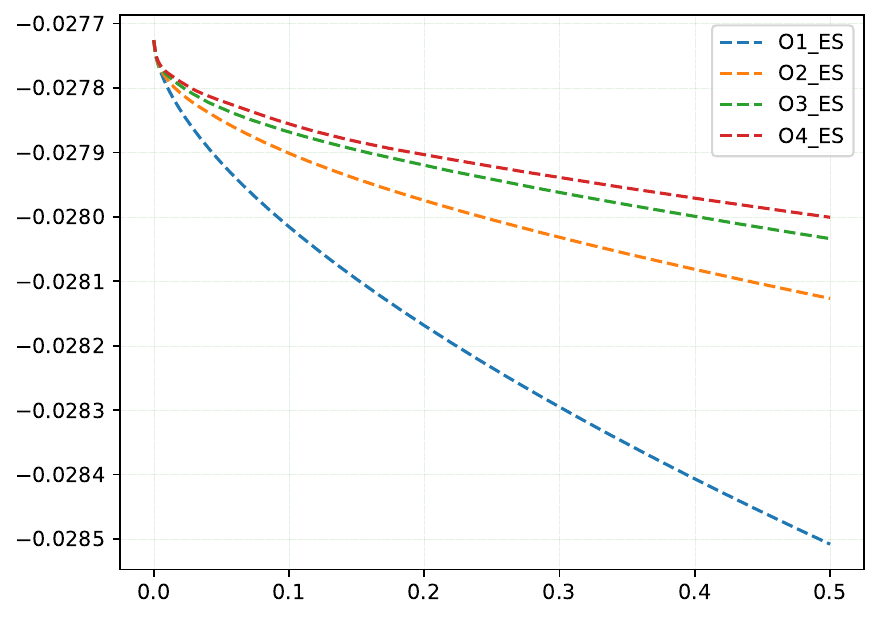}
		\caption{Entropy evolution,~$\text{200 cells}$}
	\end{subfigure}
	\caption{\nameref{test6}: Plot of entropy decay using 200 cells.}
	\label{fig:test6c}
\end{figure}

\subsubsection{1-D shear test problem}\label{test1}
This is a Riemann problem from \cite{Gavrilyuk2018,bhole2019fluctuation,Chandrashekar2020,Nkonga2022}, which gives rise to two shear waves. The domain is $[-0.5,0.5]$ with Neumann boundary conditions. The initial discontinuity is placed at $x=0$, and the initial conditions are given by
{\small \begin{equation*}
	(h,\ v_1, v_2, \ \p_{11}, \p_{12}, \p_{22}) = \begin{cases}
		\big(0.01 ,\  0,\ 0.2, \ 1.0\times10^{-4}, \ 0, \ 1.0\times10^{-4} \big)  & \text{if } x < 0.0, \\
		\big(0.01,\ 0, \ -0.2, \ 1.0\times10^{-4}, \ 0, \ 1.0\times10^{-4} \big) & \text{if } x > 0.0.   \end{cases}
\end{equation*}}
The computations are performed up to the final time $T=10.0.$
\begin{figure}
	\centering
	\begin{subfigure}[b]{0.45\textwidth}
		\includegraphics[width=\textwidth]{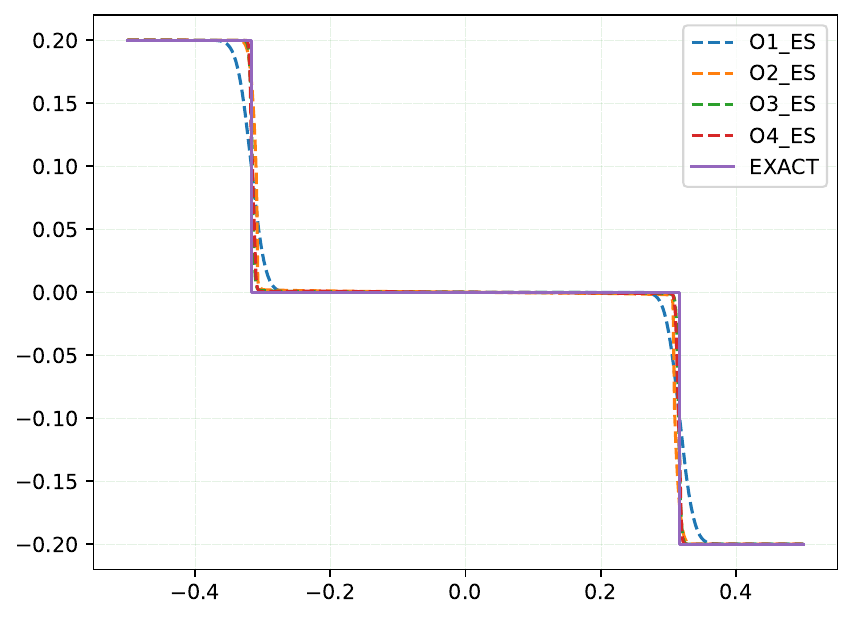}
		\caption{$v_2,~\text{500 cells}$}
	\end{subfigure}
	\begin{subfigure}[b]{0.45\textwidth}
		\includegraphics[width=\textwidth]{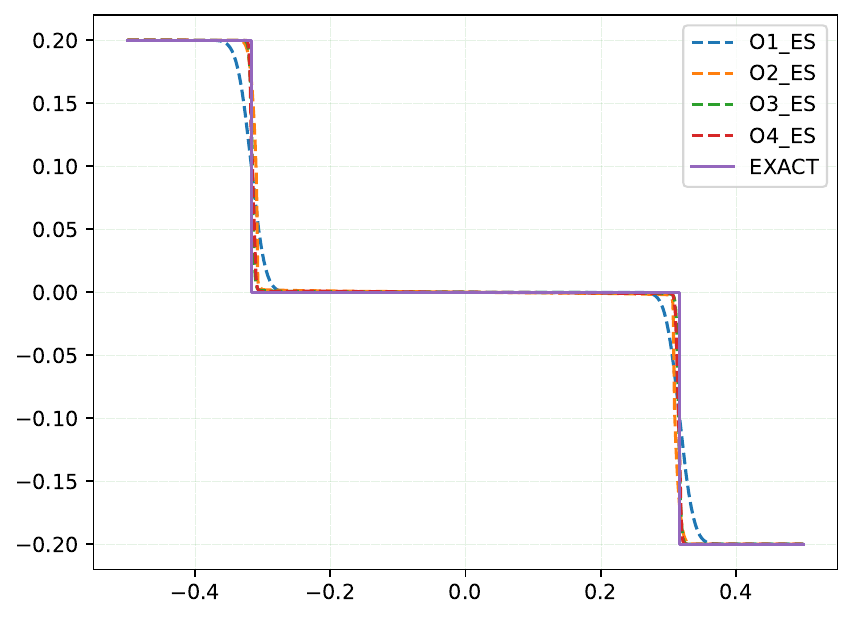}
		\caption{$v_2,~\text{2000 cells}$}
	\end{subfigure}
	\begin{subfigure}[b]{0.45\textwidth}
		\includegraphics[width=\textwidth]{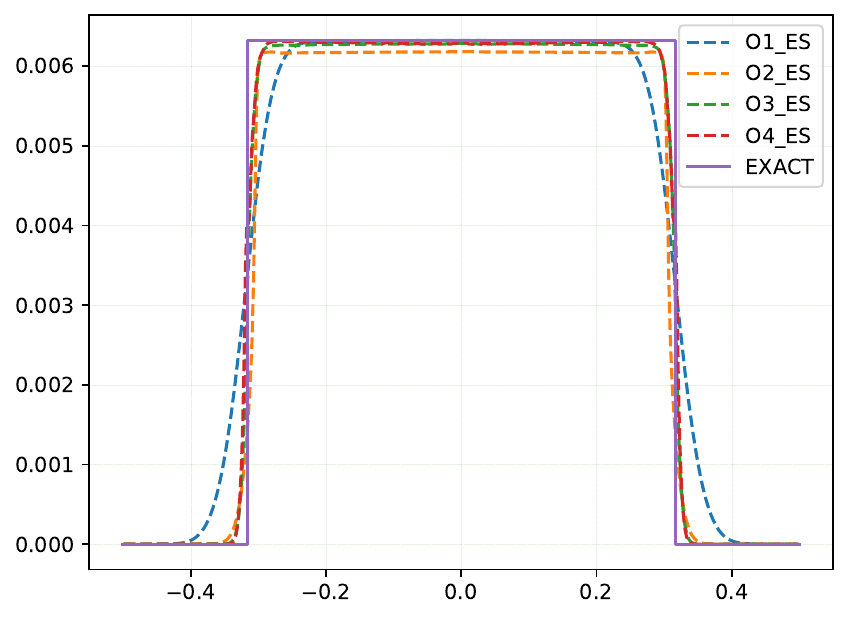}
		\caption{$\p_{12},~\text{500 cells}$}
	\end{subfigure}
	\begin{subfigure}[b]{0.45\textwidth}
		\includegraphics[width=\textwidth]{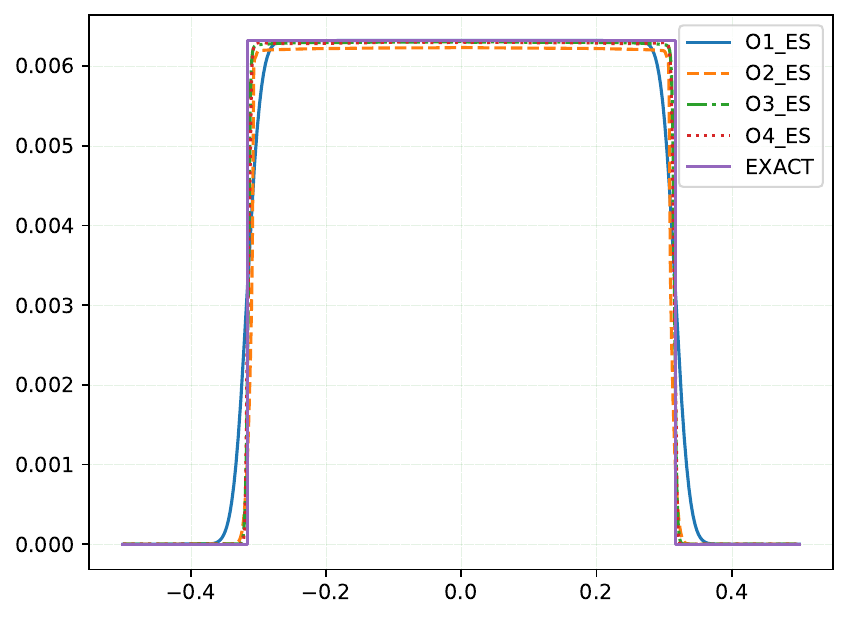}
		\caption{$\p_{12},~\text{2000 cells}$}
	\end{subfigure}
	\begin{subfigure}[b]{0.45\textwidth}
		\includegraphics[width=\textwidth]{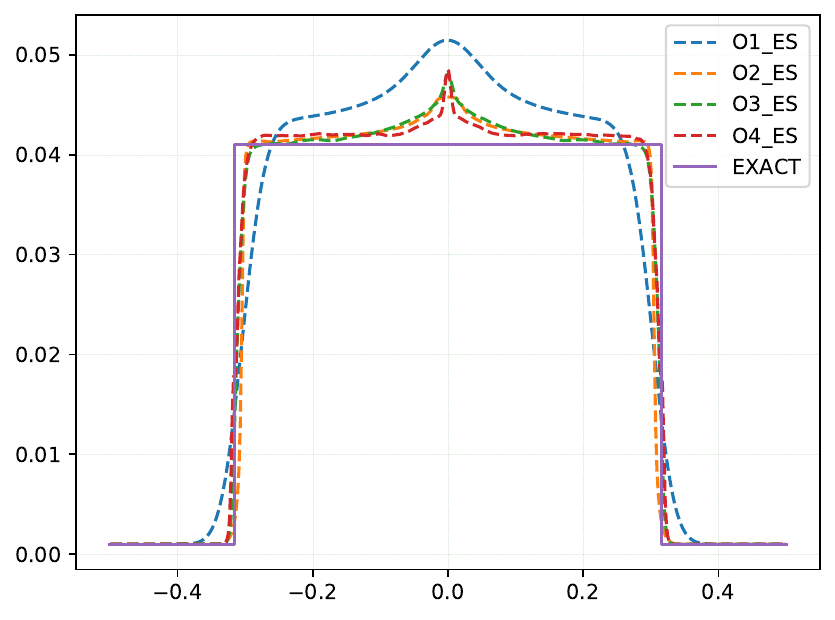}
		\caption{$\p_{22},~\text{500 cells}$}
	\end{subfigure}
	\begin{subfigure}[b]{0.45\textwidth}
		\includegraphics[width=\textwidth]{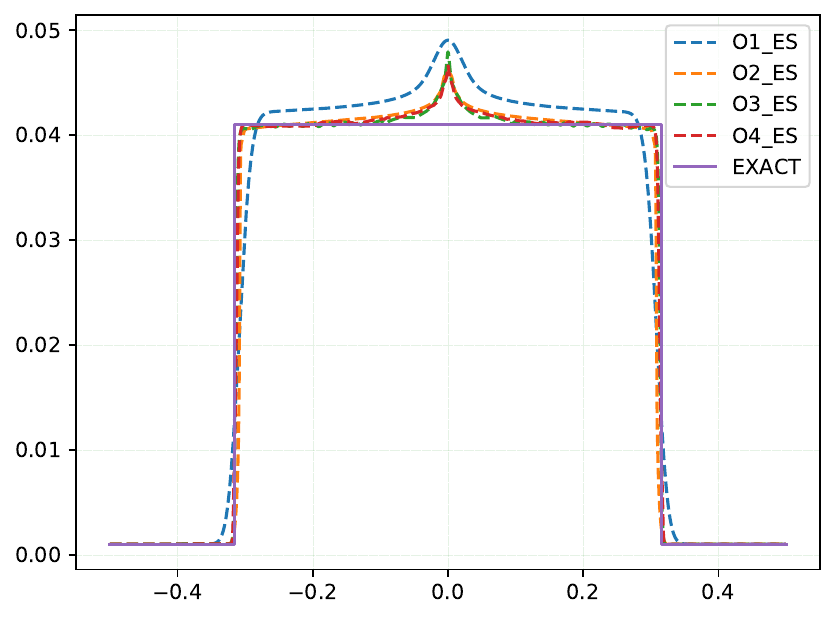}
		\caption{$\p_{22},~\text{2000 cells}$}
	\end{subfigure}
	\caption{\nameref{test1}: Plot of velocity $v_2$, stress components $\p_{12},~\p_{22}$ using 500 and 2000 cells.}
	\label{fig:test1b}
\end{figure}
The numerical solutions for the schemes O1\_ES, O2\_ES, O3\_ES, and O4\_ES using 200 and 2000 cells are presented in Fig.~\ref{fig:test1b}. We have plotted the transverse velocity $v_2$, $\p_{12}$ and $\p_{22}$ component of the stress tensor. The numerical solution has been compared with the exact solution from~\cite{Nkonga2022}. The exact solution of this Riemann problem consists of two shear waves. We observe that all the schemes are able to capture shear waves, and as expected, O4\_ES, O3\_ES, and O2\_ES are more accurate than O1\_ES. However, there are spurious spikes found at the center in $\p_{22}$, and this behavior is similar to what is observed with other numerical methods~\cite{Gavrilyuk2018,bhole2019fluctuation,Chandrashekar2020,Nkonga2022}. The result in Fig.~\ref{fig:test1c} shows the entropy decay behavior of the numerical scheme, which confirms the entropy stability of the scheme. 
\begin{figure}
	\centering
	\begin{subfigure}[b]{0.55\textwidth}
		\includegraphics[width=\textwidth]{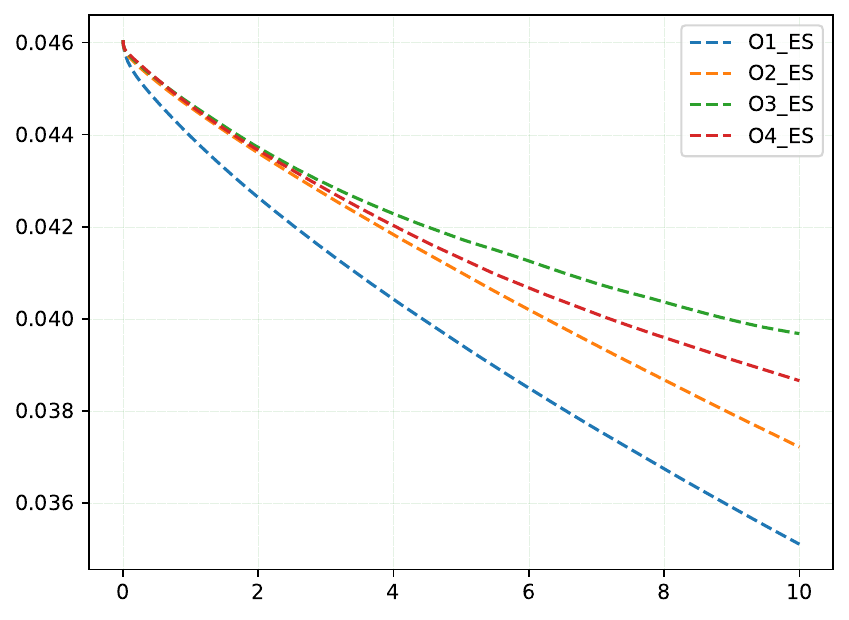}
		\caption{Entropy decay,~$\text{500 cells}$}
	\end{subfigure}
	\caption{\nameref{test1}: Plot of entropy decay using 500 cells.}
	\label{fig:test1c}
\end{figure}


\subsubsection{Single shock wave problem}\label{test4}
This Riemann problem from~\cite{Nkonga2022} should have a single shock wave according to the exact solution derived there. The computational domain is $[-0.5,0.5]$ with the Neumann boundary conditions. The initial discontinuity is placed at $x=0$, and the initial conditions are given by
{\small
\begin{equation*}
	(h,\ v_1, v_2, \ \p_{11}, \p_{12}, \p_{22}) = \begin{cases}
		\big(0.02 ,\  0,\ 0, \ 1.0\times10^{-1}, \ 0, \ 1.0\times10^{-1} \big) , \\
		\big(0.03,\ -7.010706099, \ 0, \ 16.616666666666658, \ 0, \ 1.0\times10^{-1} \big)   \end{cases}
\end{equation*}}
The numerical solutions are computed up to the final time $T=0.015811388$ with gravitational constant $g=9.81\times10^{3}$. The numerical solutions for the schemes O1\_ES, O2\_ES, O3\_ES, and O4\_ES are presented in Fig.~\ref{fig:test4a} using 500 and 2000 cells.
We have plotted the water depth $h$, velocity $v_1$, $\p_{11}$ components of the stress tensor and compare the numerical results with the exact solution provided in~\cite{Nkonga2022}. The exact solution of this Riemann problem consists of a single shock wave but we have observed that the computed numerical solutions exhibit an extra contact wave that is not present in the exact solution, and this is seen even with mesh refinement. Similar results were observed for the HLL-type schemes in~\cite{Nkonga2022}, which is a consequence of the sensitivity of solutions of non-conservative systems to numerical dissipation. The result in Fig.~\ref{fig:test4b} shows the entropy decay for the different numerical schemes using 500 cells.
\begin{figure}
	\centering
	\begin{subfigure}[b]{0.45\textwidth}
		\includegraphics[width=\textwidth]{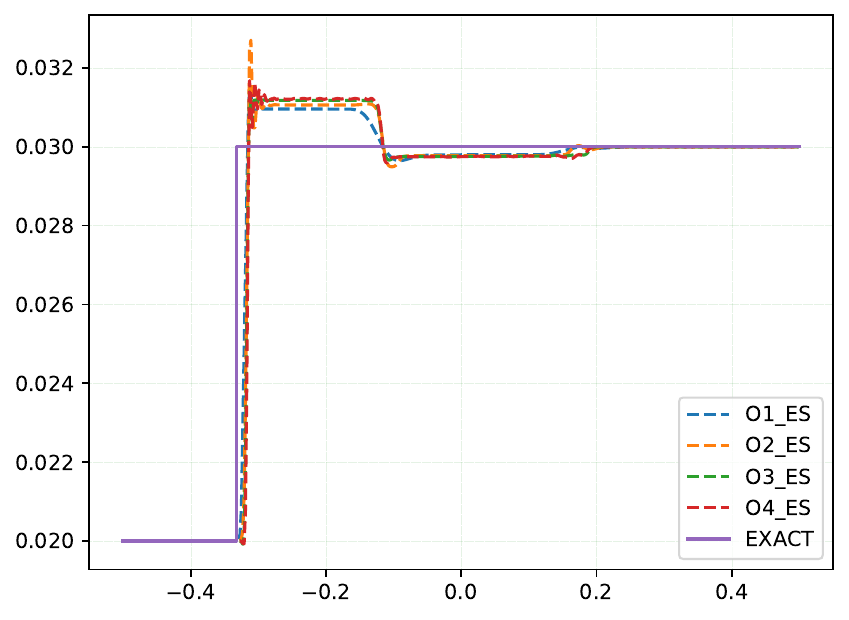}
		\caption{$h,~\text{500 cells}$}
	\end{subfigure}
	\begin{subfigure}[b]{0.45\textwidth}
		\includegraphics[width=\textwidth]{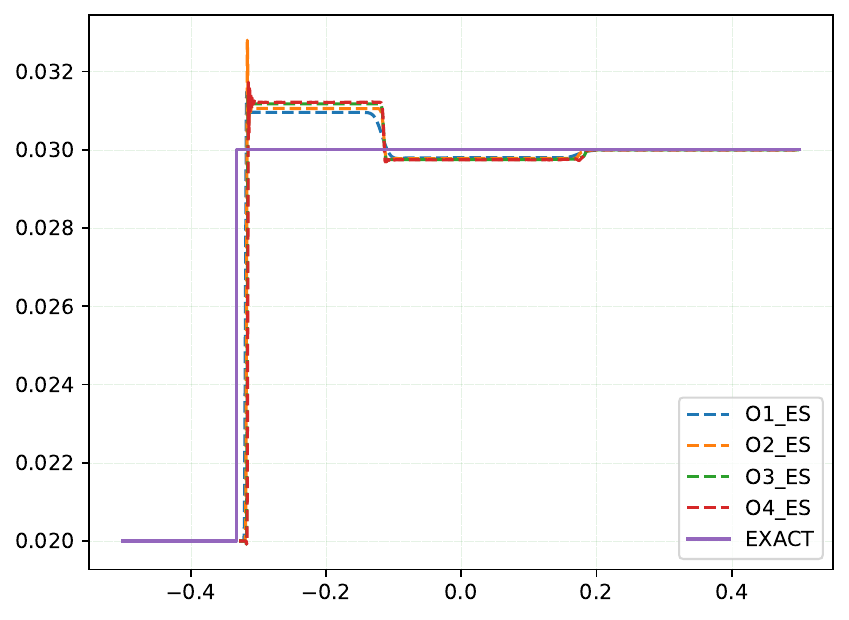}
		\caption{$h,~\text{2000 cells}$}
	\end{subfigure}
	\begin{subfigure}[b]{0.45\textwidth}
		\includegraphics[width=\textwidth]{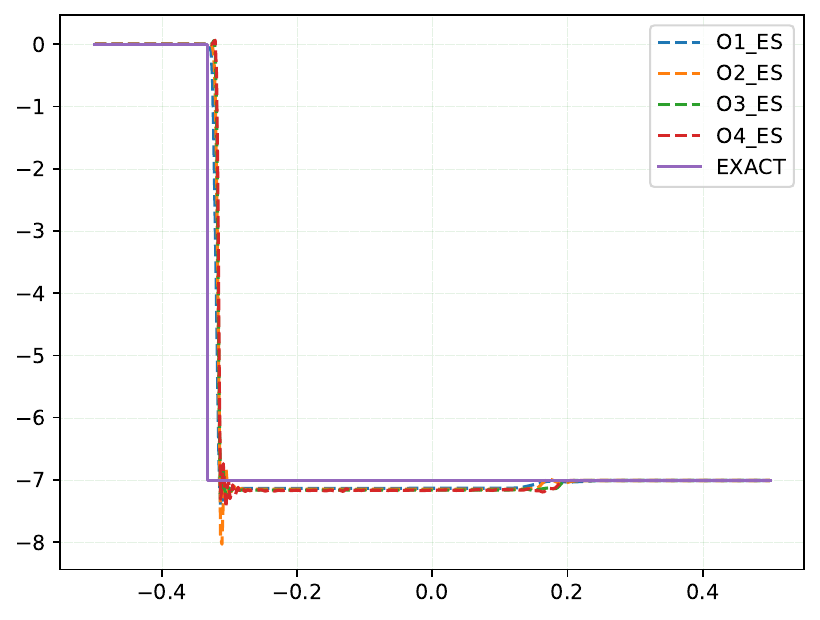}
		\caption{$v_1,~\text{500 cells}$}
	\end{subfigure}
	\begin{subfigure}[b]{0.45\textwidth}
		\includegraphics[width=\textwidth]{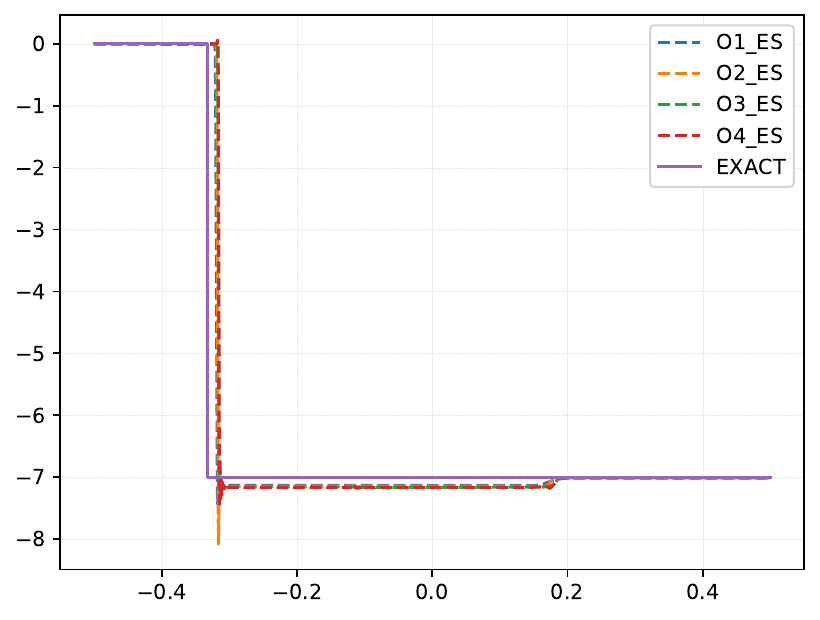}
		\caption{$v_1,~\text{2000 cells}$}
	\end{subfigure}
	\begin{subfigure}[b]{0.45\textwidth}
		\includegraphics[width=\textwidth]{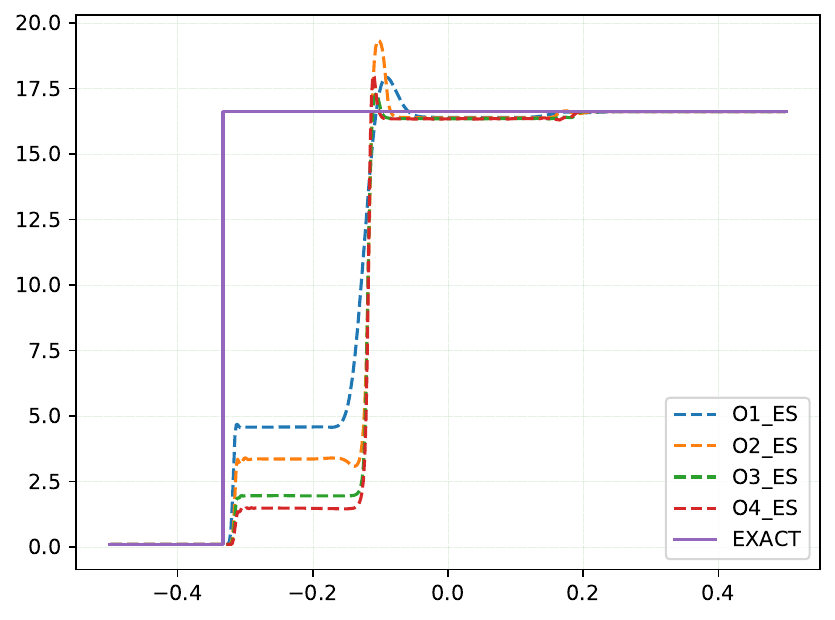}
		\caption{$\p_{11},~\text{500 cells}$}
	\end{subfigure}
	\begin{subfigure}[b]{0.45\textwidth}
		\includegraphics[width=\textwidth]{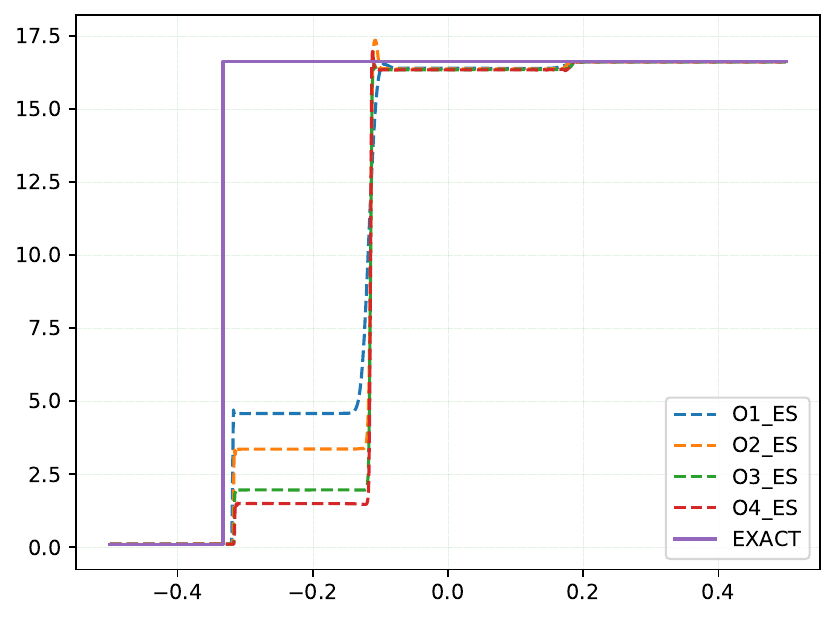}
		\caption{$\p_{11},~\text{2000 cells}$}
	\end{subfigure}
	\caption{\nameref{test4} Plot of water depth $h$, velocity $v_1$ and stress tensor components $\p_{11}$ using 500 and 2000 cells.}
	\label{fig:test4a}
\end{figure}
\begin{figure}
	\centering
	\begin{subfigure}[b]{0.45\textwidth}
		\includegraphics[width=\textwidth]{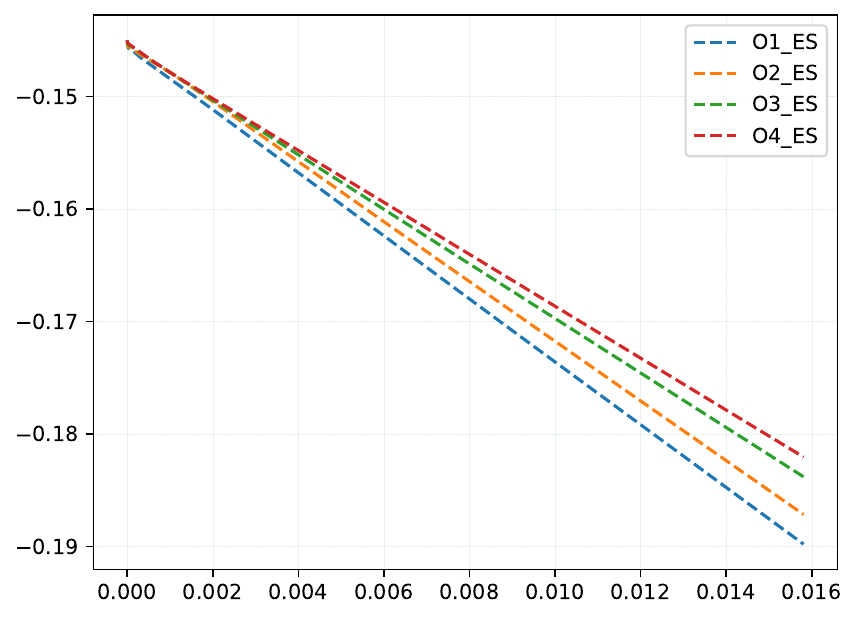}
		\caption{Entropy evolution, 500 cells}
	\end{subfigure}
	\caption{\nameref{test4} Plot of velocity $v_1$, stress tensor components $\p_{11}$ using 500 and 2000 cells and entropy evolution using 500 cells.}
	\label{fig:test4b}
\end{figure}

\subsubsection{1-D roll wave problem}\label{test11}
This problem models the flow of a thin layer of liquid flowing down an inclined bottom and results in the formation of hydraulic jump and roll waves. We use periodic boundary conditions and the initial conditions are taken from \cite{Gavrilyuk2018,bhole2019fluctuation,Chandrashekar2020} and given by
$$
h(x,0)=h_0[1+a\sin(2\pi x/L_x)],~~~~v_1(x,0)=\sqrt{g h_0 \tan{\theta}/C_f},~~~v_2(x,0)=0,
$$
$$
\p_{11}(x,0)=\p_{22}(x,0)=\frac{1}{2}\phi h^2(x,0),~~~\p_{12}(x,0)=0.
$$
The bottom topography is given by $b=-x\tan{\theta}$ and we consider two sets of parameters as given in~\cite{Ivanova2017}. In case $1$, the parameters are $\theta=0.05011 ,~C_f=0.0036,~h_0=7.98\times 10^{-3}$m, $a=0.05,~\phi=22.7s^{-2}, ~ C_r=0.00035,~L_x=1.3$m. In case $2$, the parameters are $\theta=0.11928 ,~C_f=0.0038,~h_0=5.33\times 10^{-3}$m, $a=0.05,~\phi=153.501s^{-2}, ~ C_r=0.002,~L_x=1.8$m. The computations are performed using 500 cells up to the final time $T=25$. The numerical results are presented in Fig.~\ref{fig:test11a}. We have also plotted the water depth $h$  for both the cases with Brock's experimental data \cite{brock1969development,brock1970periodic} in Fig.~\ref{fig:test11b} and observe that the numerical results are comparable with measurements. The classical shallow water model captures the hydraulic jump but is unable to predict the roll wave profile, which is captured by the SSW model.
\begin{figure}
	\centering
	\begin{subfigure}[b]{0.45\textwidth}
		\includegraphics[width=\textwidth]{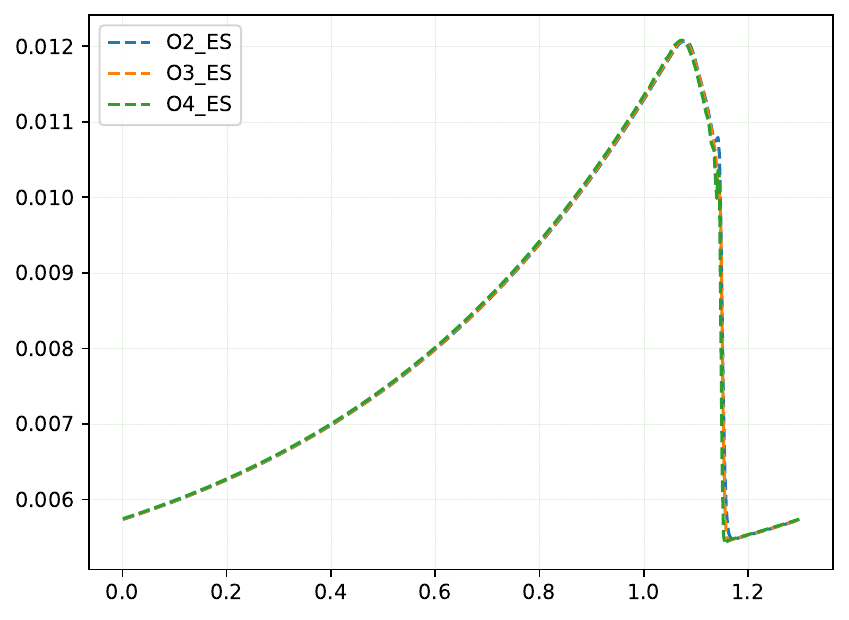}
		\caption{$h$}
	\end{subfigure}
	\begin{subfigure}[b]{0.45\textwidth}
		\includegraphics[width=\textwidth]{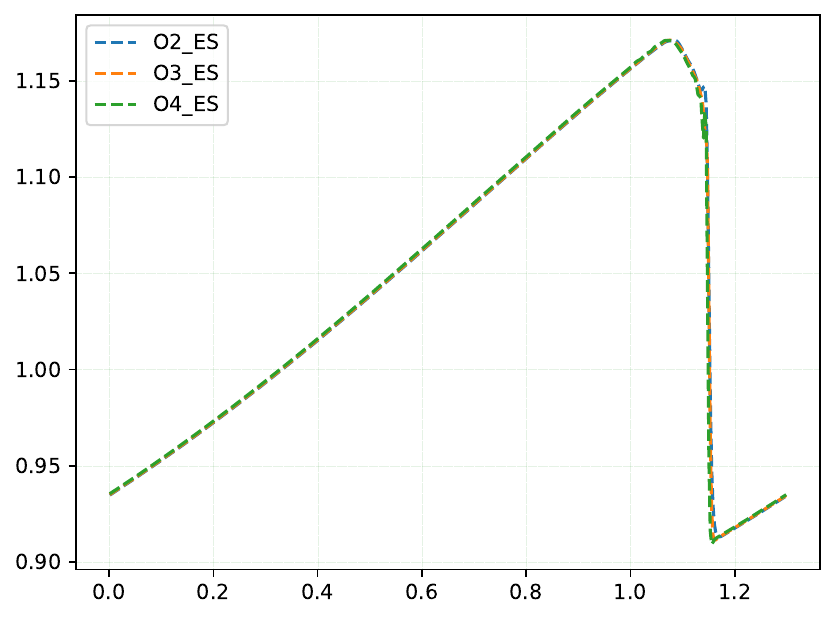}
		\caption{$v_1$}
	\end{subfigure}
	\begin{subfigure}[b]{0.45\textwidth}
		\includegraphics[width=\textwidth]{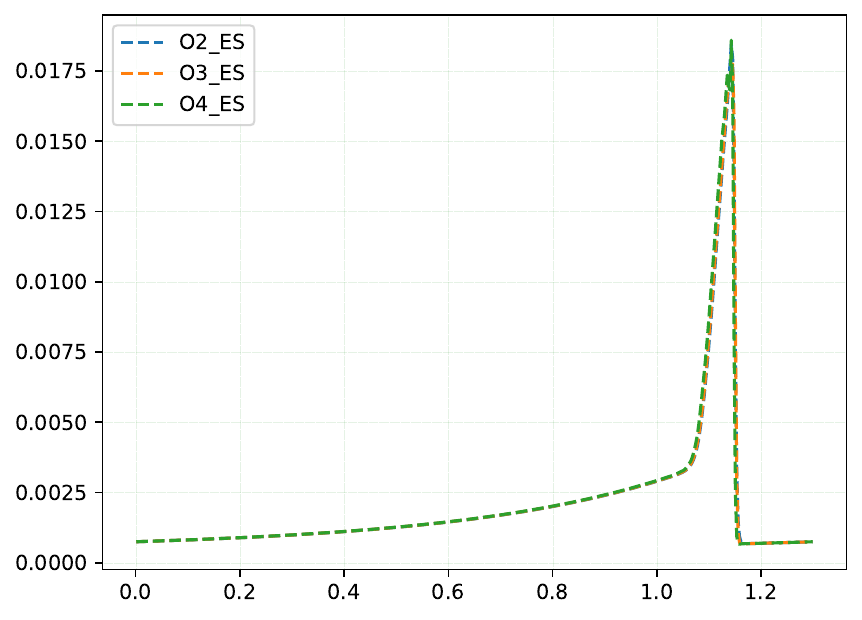}
		\caption{$\p_{11}$}
	\end{subfigure}
	\begin{subfigure}[b]{0.45\textwidth}
		\includegraphics[width=\textwidth]{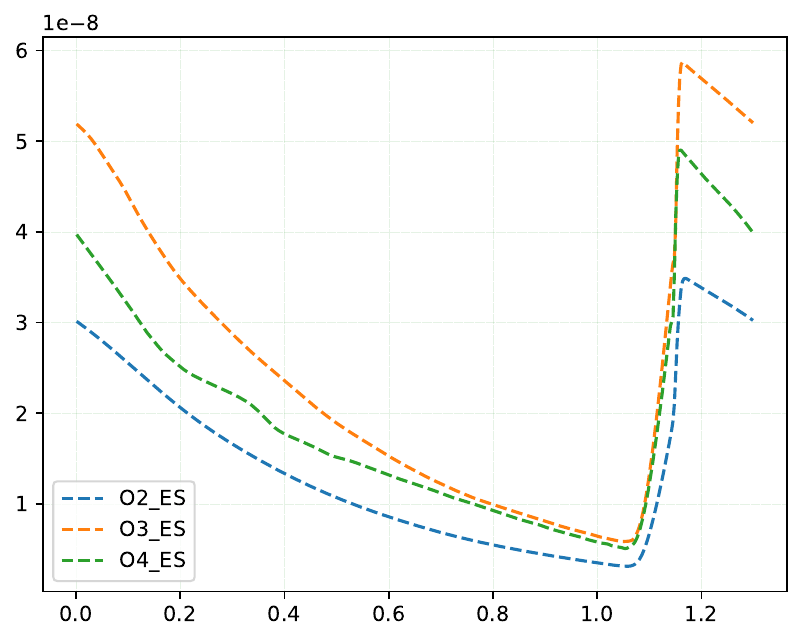}
		\caption{$\p_{22}$}
	\end{subfigure}
	\caption{\nameref{test11} Plot of water depth $h$, velocity component $v_1$ and  stress components $\p_{11},~\p_{22}$ using 500 cells.}
	\label{fig:test11a}
\end{figure}
\begin{figure}
	\centering
	\begin{subfigure}[b]{0.45\textwidth}
		\includegraphics[width=\textwidth]{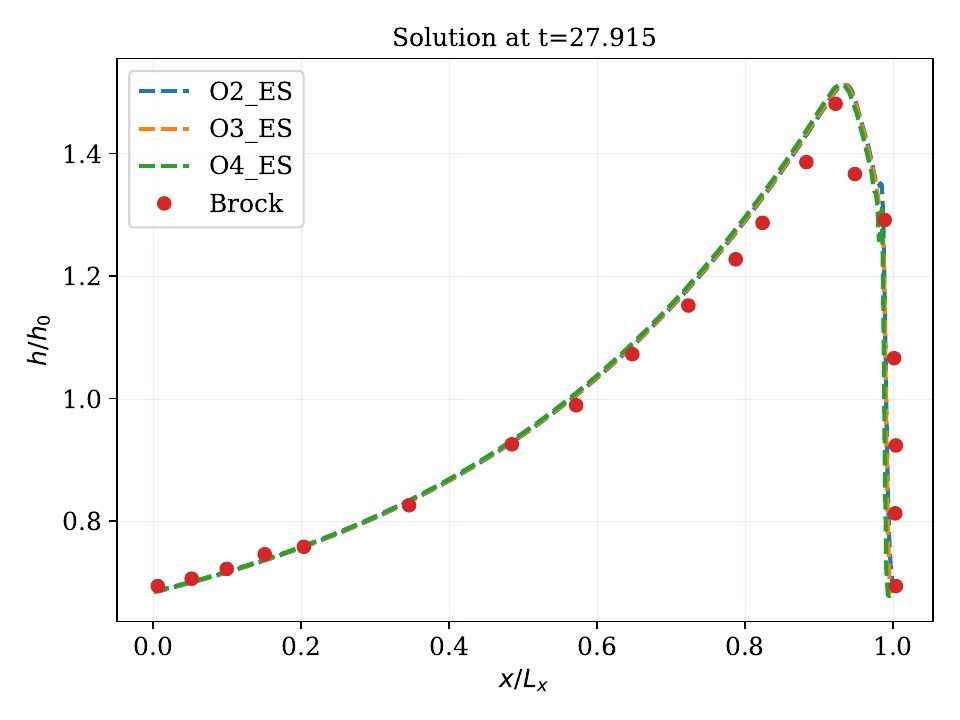}
		\caption{$h$}
	\end{subfigure}
	\begin{subfigure}[b]{0.45\textwidth}
		\includegraphics[width=\textwidth]{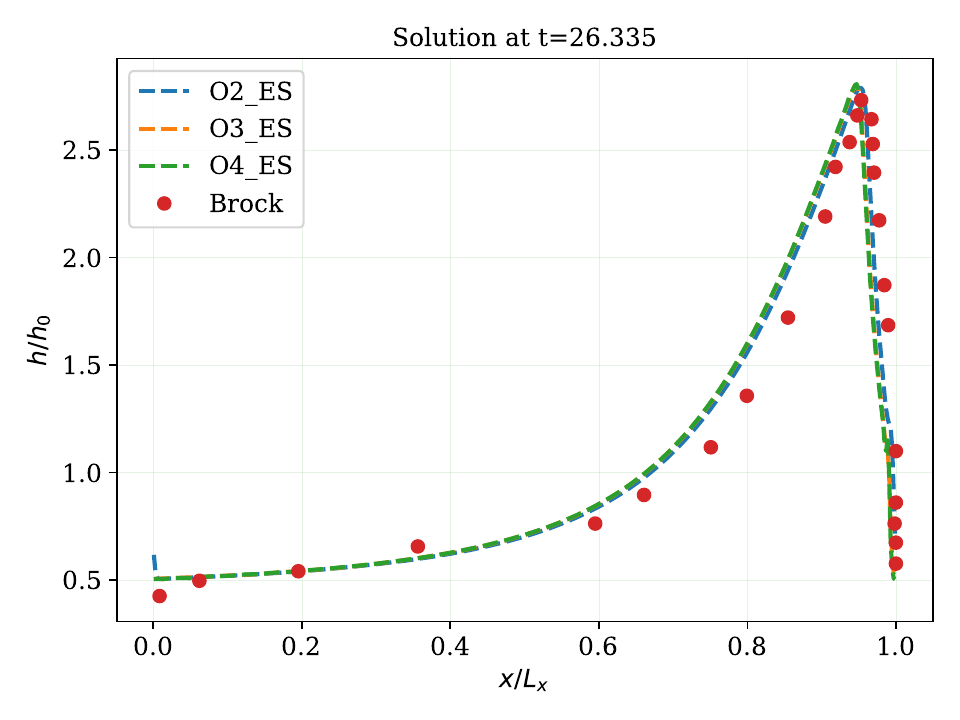}
		\caption{$h$}
	\end{subfigure}
	\caption{\nameref{test11} Comparison of water depth $h$ with Brock's experimental data for O2\_ES, O3\_ES, O4\_ES (a) Case 1 at time $t=27.915$ s (b) Case 3 at time $t=26.335$ s.}
	\label{fig:test11b}
\end{figure}
\subsection{Two-dimensional test problems}
\subsubsection{2-D accuracy test}
This is a two-dimensional extension of the smooth problem \eqref{test:accuracy}, which was solved in 1-D. The test case is used to check the formal order and accuracy of the proposed scheme in two dimensions. The forcing term $\mathcal{S}(x,y,t)$ is given by,
\begin{equation*}
	\mathcal{S}(x,y,t)=\left(0, 2\alpha, 2\alpha, \alpha, \alpha, \alpha \right)^\top,
\end{equation*}
where $\alpha=\pi\cos(2\pi(x+y-t))(1+2 g+g\sin(2 \pi (x+y-t)))$.
The exact solution with domain $[-0.5,\,0.5]\times[-0.5,0.5]$ is as follows,
\begin{align*}
	h(x,y,t) &= 2+\sin(2\pi (x+y-t)), \qquad v_1(x,y,t)=0.5, \qquad v_2(x,y,t)=0.5,&\\
	&\p_{11}(x,y,t)=\p_{22}(x,y,t)=1, \qquad \p_{12}(x,y,t)=0.
\end{align*}
Periodic boundary conditions are used for the computations, and the error is computed using the exact solution at time $T=0.5$s. We present the $L^1$ errors and order of accuracy for the water depth $h$ in Table~\ref{tab:acc3} using the schemes O2\_ES, O3\_ES, and O4\_ES. We observe that the schemes have reached the designed order of accuracy.

\begin{table}[htb!]
	\centering
	\begin{tabular}{c|c|c|c|c|c|c|}
		\hline Number of cells  & \multicolumn{2}{|c}{{O2\_ES}} &  \multicolumn{2}{|c}{O3\_ES} & \multicolumn{2}{|c}{O4\_ES}  \\
		\hline   & $L^1$ error  &  Order &  $L^1$ error      & Order & $L^1$ error      & Order \\
		\hline 40 & 1.10e-02 & -- & 6.76e-04 & -- & 4.68e-05 & -- \\
		80 & 2.42e-03 & 2.19 & 9.05e-05 & 2.90 &4.29e-06 & 3.45 \\
		160 & 8.14e-04 & 1.57 & 1.16e-05 & 2.96 & 3.31e-07 & 3.70 \\
		320 & 2.40e-04 & 1.78 & 1.46e-06 & 2.992 & 2.30e-08 & 3.85 \\
		640 & 6.63e-05 & 1.86 & 1.82e-07 & 2.998 & 1.54e-09 & 3.90 \\
		1280& 1.78e-05 & 1.90 & 2.28e-08 & 2.999 & 1.01e-10 & 3.93\\
		\hline
	\end{tabular}
	\caption{Accuracy test: $L^1$ errors and order of accuracy for \rev{the} water depth $h$.}
	\label{tab:acc3}
\end{table}

\subsubsection{2-D roll wave problem}\label{test13}
This is a two-dimensional extension of the 1-D roll wave test from  Section~\ref{test11}. The initial conditions are given by
$$h(x,y,0)=h_0[1+a\sin(2\pi x/L_x)+a\sin(2\pi y/L_y)],$$
$$ v_1(x,y,0)=\sqrt{g h_0 \tan{\theta}/C_f}, \qquad v_2(x,y,0)=0,$$
$$p_{11}(x,y,0)=\p_{22}(x,y,0)=\frac{1}{2}\phi h^2(x,0), \qquad \p_{12}(x,y,0)=0.$$
The computational domain is \rev{$[0,1.3]\times[0,0.5]$} with the periodic boundary conditions. This problem includes the source term with bottom topography given by  $b=-x\tan{\theta}$. Here, $\theta=0.05011 ,~C_f=0.0036,~h_0=7.98\times 10^{-3}$m, $a=0.05,~\phi=22.7s^{-2}, ~ C_r=0.00035,~L_x=1.3$m,\ $L_y=0.5$m as given in \cite{Gavrilyuk2018,bhole2019fluctuation,Chandrashekar2020}. The computations are performed up to the final time $T=36$s, and the numerical results are presented in Figures~\ref{fig:test13b}, \ref{fig:test13c}, \ref{fig:test13d}. The elevation of the water surface shown in Fig.~\ref{fig:test13b} indicates the formation of hydraulic jump and roll waves, but the solutions do not look smooth. This type of solution has been observed in previous studies~\cite{bhole2019fluctuation,Chandrashekar2020} using different numerical schemes. Fig.~\ref{fig:test13c} shows the projection of the $h$ profile onto the plane $y=0$, and its $y$-average is shown as a red line. While the profile varies in the $y$ direction and looks random/turbulent, the average profile shows the characteristic roll wave and hydraulic jump that is also seen in the 1-D simulations. The higher order schemes exhibit more fluctuations about the average and also give a better resolution of the roll wave than the first order scheme. Fig.~\ref{fig:test13d} shows the contour lines of the $h$ field at time $T=36$ units which show carbuncle-like structures that are seen in some compressible flow problems~\cite{Elling2009}. The first-order scheme shows a somewhat smooth solution \rev{similar to~\cite{bhole2019fluctuation,Chandrashekar2020}}, while the higher-order schemes show more small-scale structures which have been observed in previous studies also~\cite{Chandrashekar2020}. The solutions qualitatively look similar to those obtained using the five-wave HLLC solver, while the two-wave and three-wave HLL-type schemes show more smooth solutions~\cite{Chandrashekar2020}. This indicates that the present schemes are able to more accurately model the five waves in the solution, like the sophisticated multi-wave approximate Riemann solvers. The similarity of solutions obtained for this problem from different numerical schemes suggests that they may not be purely numerical artifacts.
\begin{figure}
	\centering
	\begin{subfigure}[b]{0.49\textwidth}
		\includegraphics[width=\textwidth]{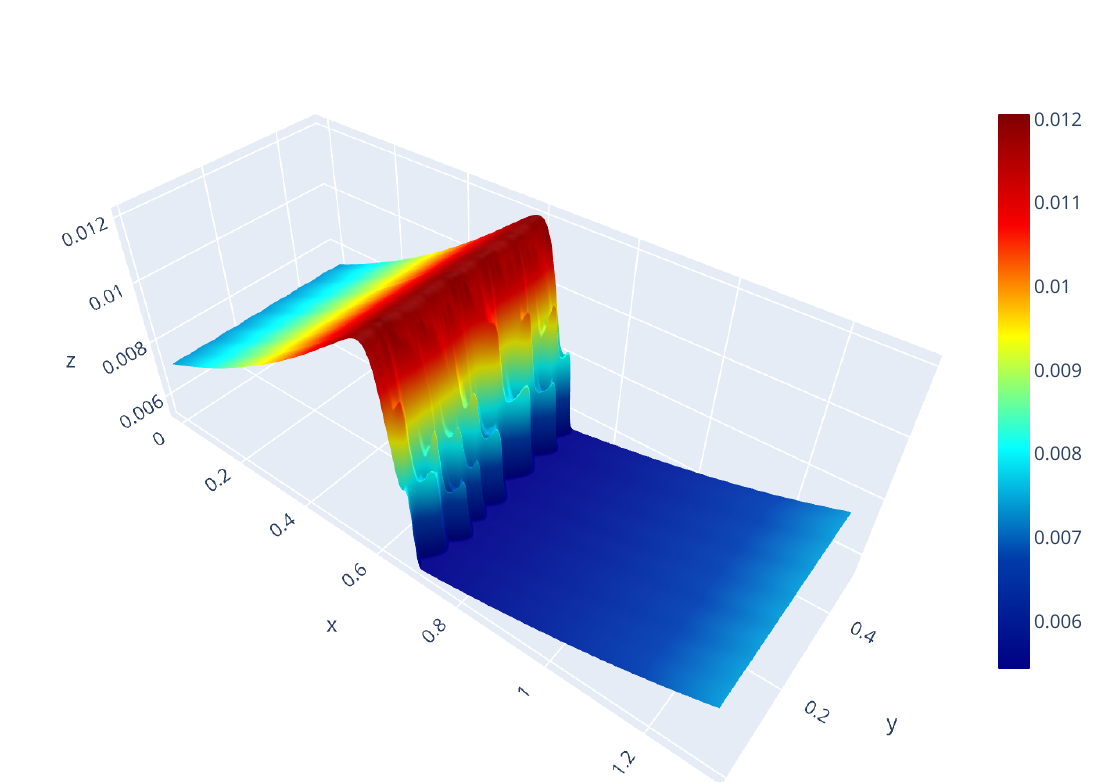}
		\caption{$h$, O1\_ES}
	\end{subfigure}
	\begin{subfigure}[b]{0.49\textwidth}
		\includegraphics[width=\textwidth]{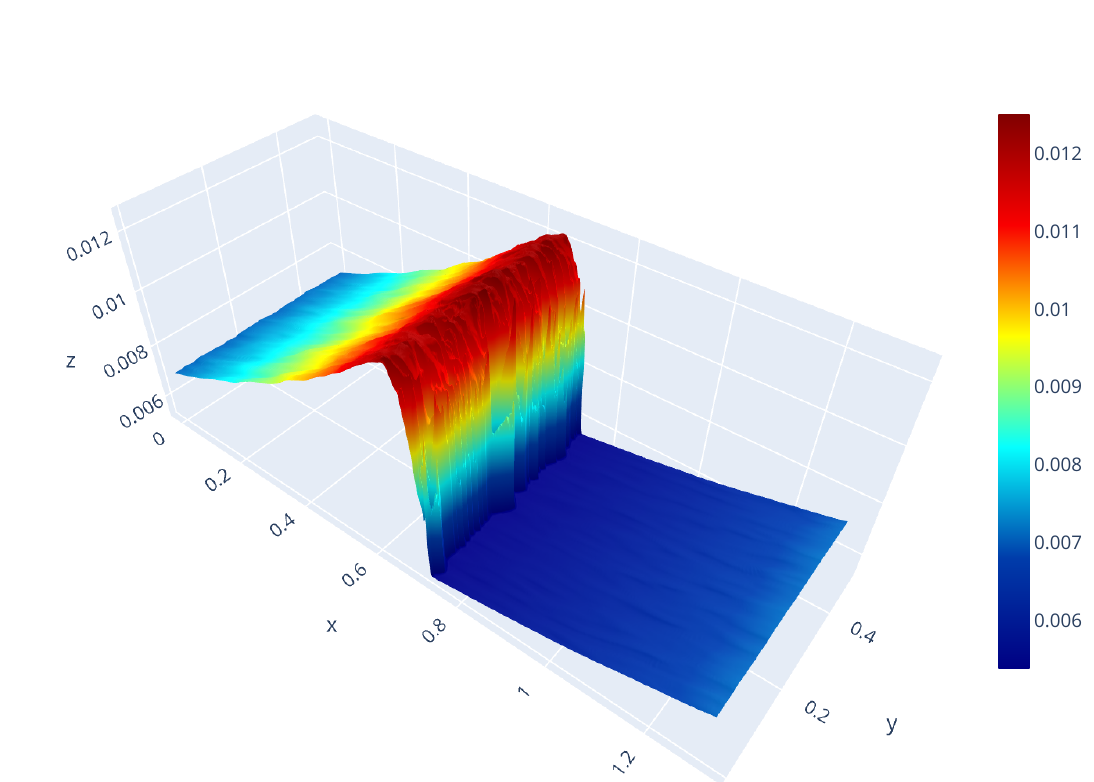}
		\caption{$h$, O2\_ES}
	\end{subfigure}
	\begin{subfigure}[b]{0.49\textwidth}
		\includegraphics[width=\textwidth]{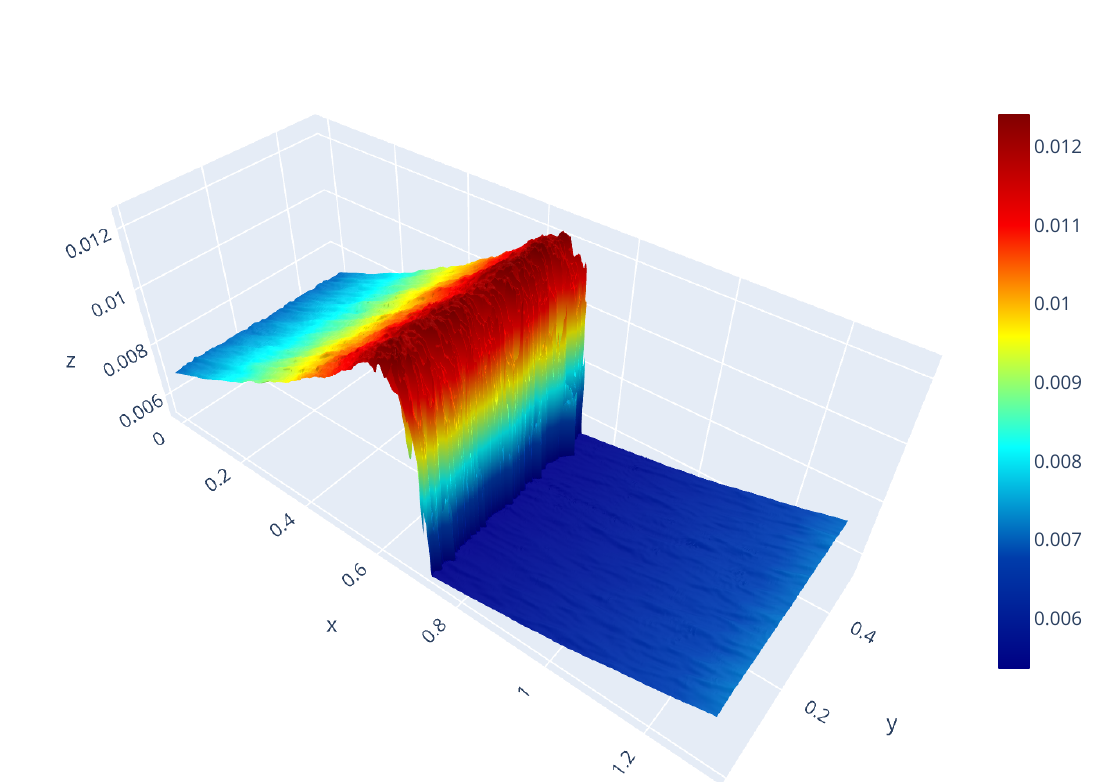}
		\caption{$h$, O3\_ES}
	\end{subfigure}
	\begin{subfigure}[b]{0.49\textwidth}
		\includegraphics[width=\textwidth]{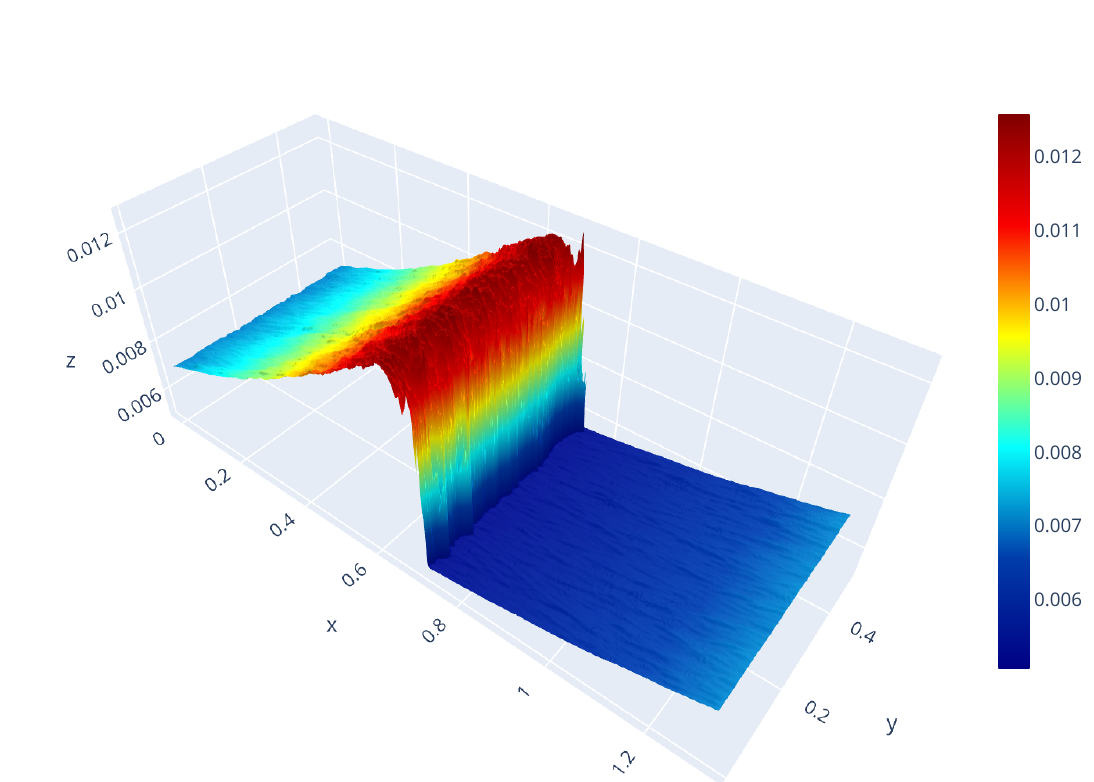}
		\caption{$h$, O4\_ES}
	\end{subfigure}
	\caption{\nameref{test13} Plot of water depth $h$ at 1040$\times$400 cells at time $T=36$ unit.}
	\label{fig:test13b}
\end{figure}

\begin{figure}
	\centering
	\begin{subfigure}[b]{0.45\textwidth}
		\includegraphics[width=\textwidth]{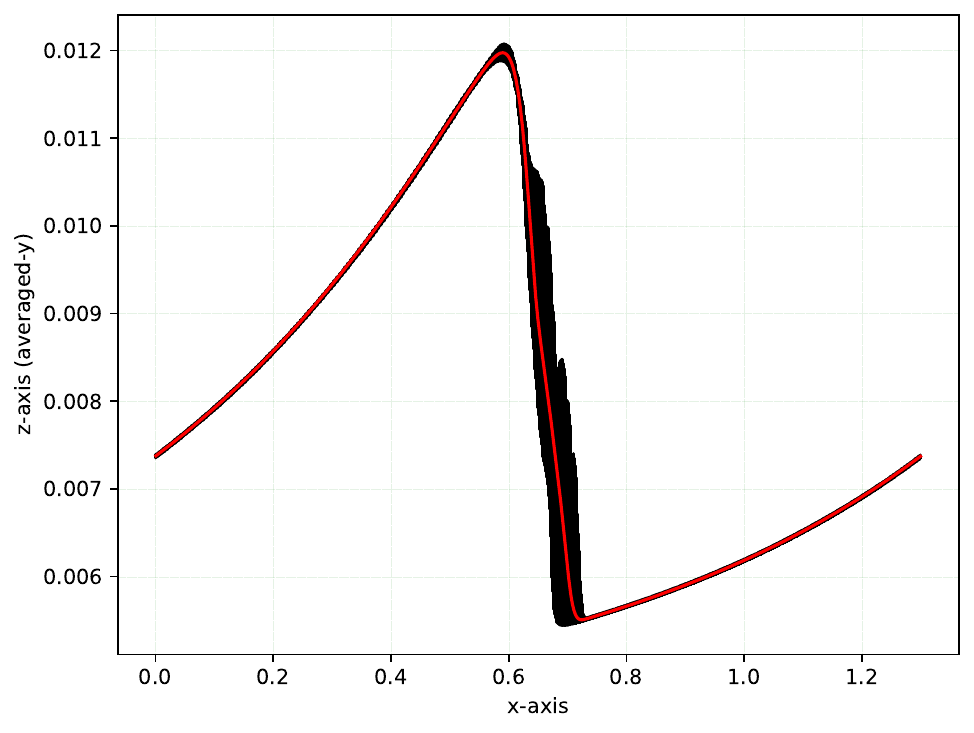}
		\caption{$h$, O1\_ES}
	\end{subfigure}
	\begin{subfigure}[b]{0.45\textwidth}
		\includegraphics[width=\textwidth]{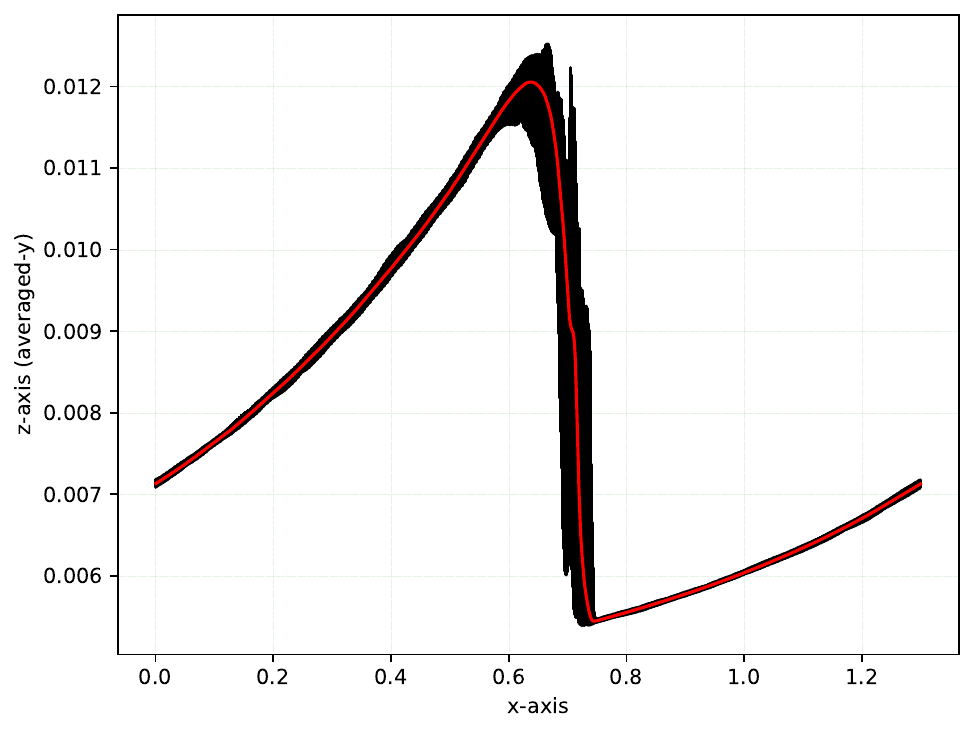}
		\caption{$h$, O2\_ES}
	\end{subfigure}
	\begin{subfigure}[b]{0.45\textwidth}
		\includegraphics[width=\textwidth]{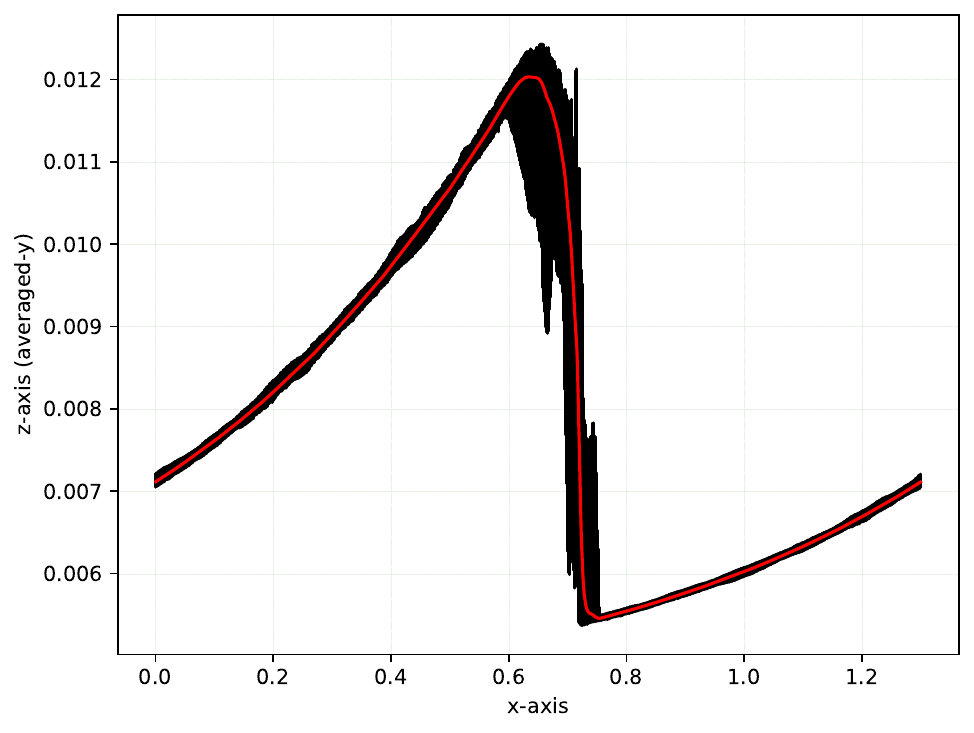}
		\caption{$h$, O3\_ES}
	\end{subfigure}
	\begin{subfigure}[b]{0.45\textwidth}
		\includegraphics[width=\textwidth]{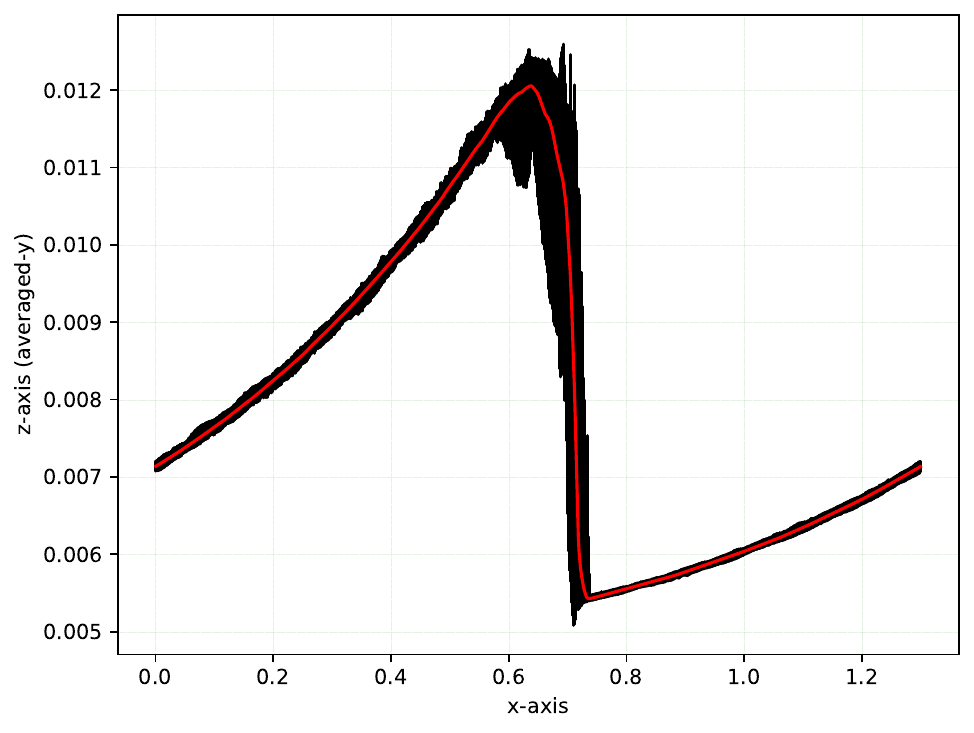}
		\caption{$h$, O4\_ES}
	\end{subfigure}
	\caption{\nameref{test13} Y-average of depth field for different schemes at time $T=36$ unit on $1040\times400$ mesh.}
	\label{fig:test13c}
\end{figure}

\begin{figure}
	\centering
	\begin{subfigure}[b]{0.45\textwidth}
		\includegraphics[width=\textwidth]{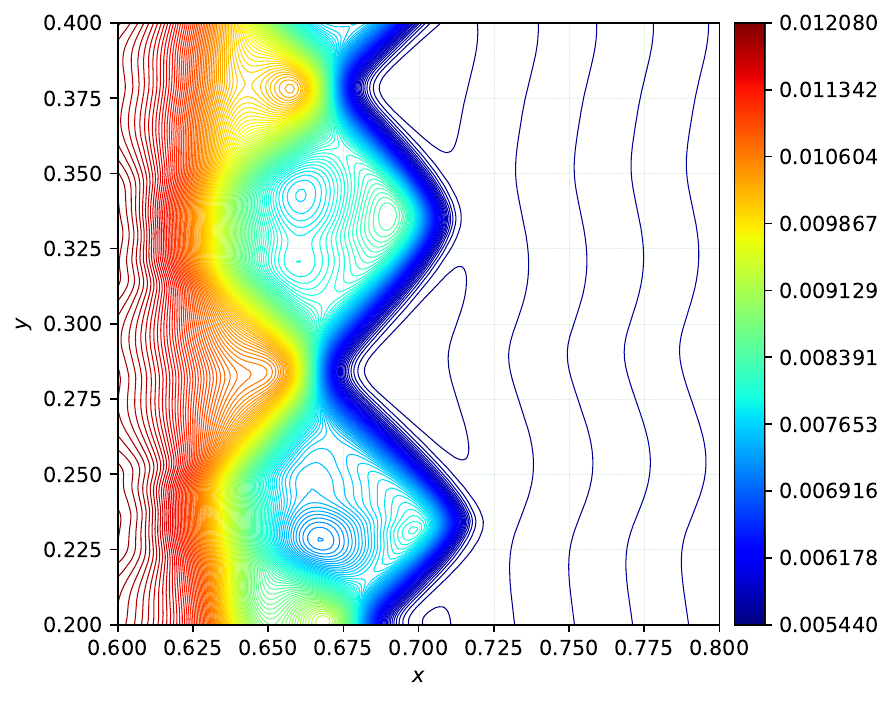}
		\caption{$h$, O1\_ES}
	\end{subfigure}
	\begin{subfigure}[b]{0.45\textwidth}
		\includegraphics[width=\textwidth]{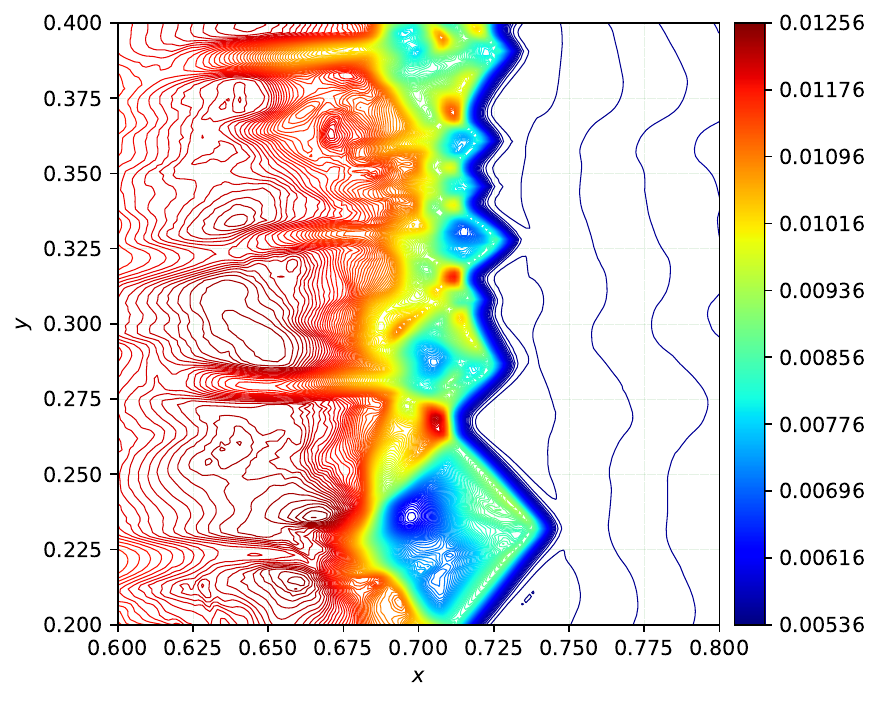}
		\caption{$h$, O2\_ES}
	\end{subfigure}
	\begin{subfigure}[b]{0.45\textwidth}
		\includegraphics[width=\textwidth]{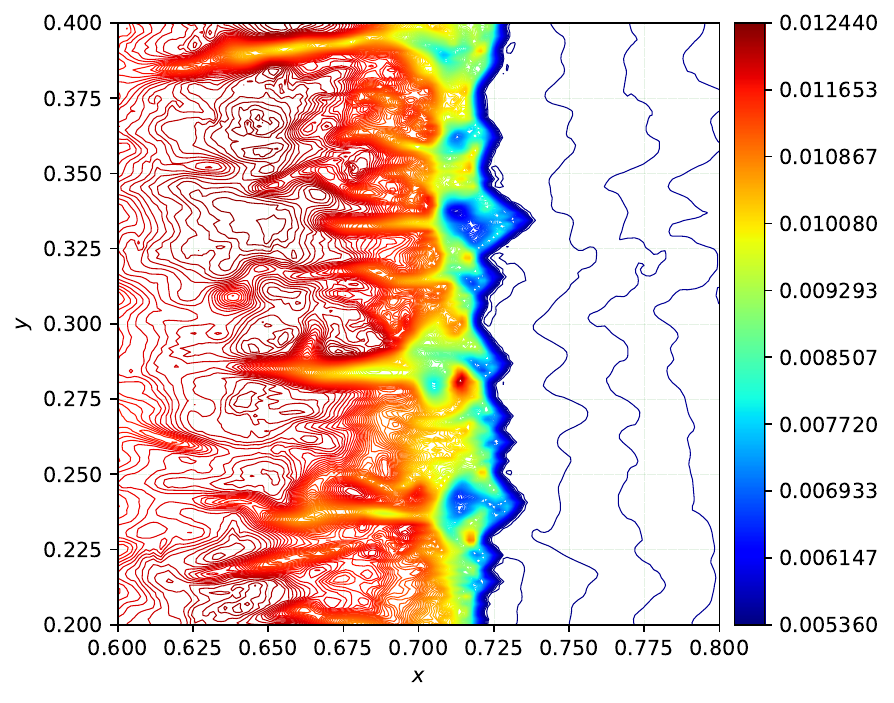}
		\caption{$h$, O3\_ES}
	\end{subfigure}
	\begin{subfigure}[b]{0.45\textwidth}
		\includegraphics[width=\textwidth]{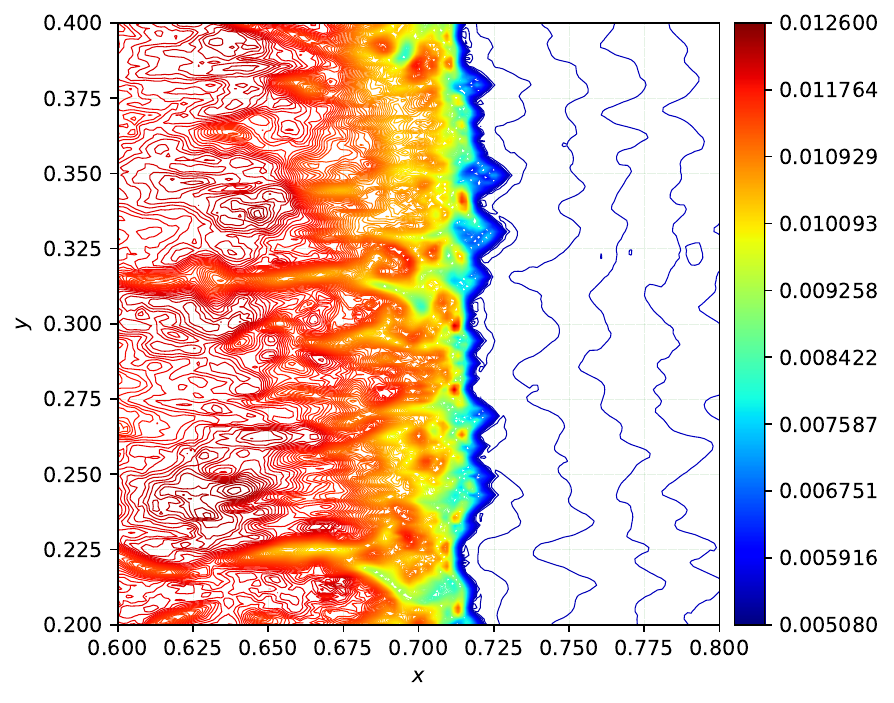}
		\caption{$h$, O4\_ES}
	\end{subfigure}
	\caption{\nameref{test13} Contour plot of depth field for different schemes at time $T=36$ unit on $1040\times400$ mesh. }
	\label{fig:test13d}
\end{figure}

\section{Summary and conclusions}\label{sec:sum}

We have developed semi-discretely entropy stable schemes for the shear shallow water model which is a non-conservative hyperbolic system modeling shallow flows but including horizontal vorticity effects. The conservative part of the model is identical to the Ten-moment model of gas dynamics, and the non-conservative terms are due to gravity. For conservative systems, the existence of an entropy condition is related to the symmetrizability of the system, but this is not sufficient for non-conservative systems. In fact, the SSW model does not become symmetric when written in terms of entropy variables. However, we can exploit the symmetrizability of the conservative part to construct entropy conservative and entropy stable schemes since the non-conservative terms do not contribute to the entropy equation.

We have constructed up to fourth-order finite difference schemes which satisfy the entropy inequality. The inequality is obtained due to the addition of carefully designed dissipative fluxes based on entropy scaled eigenvectors. The fully discrete schemes obtained with RK time stepping have been applied to several test problems like dam break and roll waves and shown to yield stable solutions that compare well with some exact solutions. The fully discrete schemes are observed to satisfy the entropy inequality in the numerical results. The roll wave solutions are able to match the experimental results of Brock. In multi-dimensions, the roll waves also generate turbulent like solutions and carbuncle like features that have been observed from other numerical techniques based on approximate Riemann solvers that include five waves in their model. Thus, the proposed schemes are expected to be similar to such accurate Riemann solver models in their wave resolution capabilities.

%

\begin{acknowledgements}
The work of Praveen Chandrashekar is supported by the Department of Atomic Energy,  Government of India, under project no.~12-R\&D-TFR-5.01-0520. The work of Harish Kumar is supported in parts by DST-SERB, MATRICS grant with file No. MTR/2019/000380.
\end{acknowledgements}

 \section*{Conflict of interest}
 The authors declare that they have no conflict of interest.
 \section*{Data Availability Declaration}
Data will be made available on reasonable request. 
\bibliographystyle{spmpsci}      
\bibliography{main}
\appendix

\section{A note on non-symmetrizability of shear shallow water model}\label{symmetrizability}
In this section, we will discuss the symmetrizability of the following SSW model in one dimension, i.e., we consider,
\begin{equation}\label{eq:ssw1d}
	\frac{\partial \con}{\partial t}+\frac{\partial \fx(\con)}{\partial x}+\tilde\Bx(\con)\frac{\partial \con}{\partial x}=0,
\end{equation}
where $\con, \fx$ and $\tilde\Bx$ are defined in Section \eqref{sec:entropy}.
Additionally, this system has the entropy pair $(\eta,q)$ \eqref{entropy-pair}, such that in addition to \eqref{eq:ssw} the following equality holds,
\begin{equation*}
	\frac{\partial \eta}{\partial t}+\frac{\partial q}{\partial x}=0.
\end{equation*}
for smooth solutions. For detailed proof, refer to Lemma \eqref{prop:entropy}.  In the standard symmetrization theory \cite{godunov1961,lax1973hyperbolic,harten1983symmetric,mock1980systems}, one seeks a change of variable $\mathbf{U} \to \mathbf{\evar}$ applied to \eqref{eq:ssw1d} so that when transformed
\begin{align*}
	\frac{\partial \mathbf{U}}{\partial \evar}\frac{\partial \evar}{\partial t}+\bigg(\frac{\partial \fx}{\partial \mathbf{U}}+\tilde\Bx\bigg)\frac{\partial \mathbf{U}}{\partial \evar}\frac{\partial \evar}{\partial x}=0,
\end{align*}
the matrix $\frac{\partial \mathbf{U}}{\partial \evar}$ is symmetric, positive definite and the matrix $\tilde A_1 = \bigg(\frac{\partial \fx}{\partial \mathbf{U}}+\tilde\Bx\bigg)\frac{\partial \mathbf{U}}{\partial \evar}$ is symmetric. For the SSW system \eqref{eq:ssw1d}, we calculate the matrix $\tilde A_1$ to check it's symmetry, which yields

\begin{align*}
	\tilde A_1 - \tilde A_1^\top = \begin{pmatrix}
		0 & -\alpha & 0 & -\alpha v_1 & -\frac{1}{2} \alpha v_2 & 0 \\
		\alpha & 0 & \alpha v_2 & -\beta_1 & \frac{1}{2} \alpha \p_{12} & \beta_2 \\
		0 & -\alpha v_2 & 0 & -\alpha v_1 v_2 & -\frac{1}{2} \alpha v_2^2 & 0 \\
		\alpha v_1 & \beta_1 & \alpha v_1 v_2 & 0 & a & v_1 \beta_2 \\
		\frac{1}{2}\alpha v_2 & -\frac{1}{2} \alpha \p_{12} & \frac{1}{2} \alpha v_2^2 & -a & 0 & \frac{1}{2}v_2 \beta_2\\
		0 & -\beta_2 & 0 & -\beta_2 & -\frac{1}{2} v_2 \beta_2 & 0
	\end{pmatrix}.
\end{align*}
where 
$$
\alpha=\frac{gh^2}{2}, \quad \beta_1=\frac{1}{4} g h^2 \left(v_1^2-\p_{11}\right),\quad \beta_2=\frac{1}{4} g h^2 \left(v_2^2+\p_{22}\right)
$$
$$
a=\frac{1}{8} g h^2 \left(2 \p_{12} v_1+\left(v_1^2-\p_{11}\right) v_2\right)
$$
Hence, $\tilde A_1$ is not a symmetric matrix unless $g=0$, in which case the non-conservative terms vanish from the SSW model. Furthermore, we recall the following result presented in \cite{godlewski1996numerical} which gives the necessary and sufficient condition for a non-linear system of conservation laws to admit a strictly convex entropy.
\begin{theorem}
	A necessary and sufficient condition for the conservative system,
	\begin{align}\label{eq:sys_conservative}
		\df{\con}{t} + \df{\fx}{ x}=0,
	\end{align} to posses a strictly convex entropy $\eta$ is that there exists a change of dependent variables $\con=\con(\evar)$ that symmetrizes~\eqref{eq:sys_conservative}.
\end{theorem}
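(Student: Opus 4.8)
The plan is to prove the two implications separately, in both directions routing the argument through the \emph{entropy variables} $\evar=(\eta'(\con))^\top$ together with a pair of Legendre transforms; this is the classical argument of Godunov, Friedrichs--Lax and Mock, applied here to the purely conservative subsystem \eqref{eq:sys_conservative}.

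\emph{Sufficiency} (a symmetrizer $\Rightarrow$ a strictly convex entropy). Suppose the change of variables $\con=\con(\evar)$ makes $A_0(\evar):=\partial\con/\partial\evar$ symmetric positive definite and $A_1(\evar):=\partial\fx/\partial\evar$ symmetric. Since $A_0$ is a symmetric Jacobian on the convex (hence simply connected) admissible set, I would invoke the Poincar\'{e} lemma to obtain a scalar potential $\Phi(\evar)$ with $\con=\nabla_\evar\Phi$; its Hessian being $A_0$, $\Phi$ is strictly convex and $\evar\mapsto\con$ is a diffeomorphism onto its image. The same argument applied to the symmetric $A_1$ yields $\Psi(\evar)$ with $\fx=\nabla_\evar\Psi$. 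I would then \emph{define}, with $\evar=\evar(\con)$ the inverse map,
\[
\eta(\con):=\evar^\top\con-\Phi(\evar),\qquad q^x(\con):=\evar^\top\fx-\Psi(\evar),
\]
and verify by the standard Legendre-duality identities that $\eta'(\con)=\evar^\top$ and $\eta''(\con)=A_0^{-1}$, so $\eta$ is strictly convex, and, using $\mathrm{d}\Psi=\fx{}^\top\mathrm{d}\evar$, that ${q^x}'(\con)=\evar^\top{\fx}'(\con)=\eta'(\con){\fx}'(\con)$, so $(\eta,q^x)$ is an entropy--entropy flux pair.

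\emph{Necessity} (a strictly convex entropy $\Rightarrow$ a symmetrizer). Conversely, given a strictly convex $\eta$ with compatible flux $q^x$, I would take $\evar:=(\eta'(\con))^\top$ as the new dependent variable; strict convexity makes $\eta''$ symmetric positive definite, so this is an admissible change of variables with $\partial\con/\partial\evar=(\eta'')^{-1}$ symmetric positive definite. The existence of the entropy flux $q^x$ is exactly the integrability condition that $\eta''(\con){\fx}'(\con)$ be symmetric (it is, modulo a piece symmetric by construction, the Hessian of $q^x$); granting this,
\[
\frac{\partial\fx}{\partial\evar}={\fx}'(\con)(\eta'')^{-1}=(\eta'')^{-1}\bigl(\eta''{\fx}'\bigr)(\eta'')^{-1},
\]
which is the conjugation of a symmetric matrix and hence symmetric, so $\evar$ symmetrizes the system. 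I expect the only delicate points to be the global existence of the potentials $\Phi,\Psi$ (which is where convexity of the admissible set is used) and the Legendre-transform bookkeeping; the integrability identity $\eta''{\fx}'=({\fx}')^\top\eta''$ invoked here is precisely the conservative-part analogue of the matrix computation carried out above for the full SSW system, where the corresponding symmetry fails only because of the gravitational non-conservative terms.
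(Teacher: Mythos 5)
Your proof is correct and follows the classical Godunov--Friedrichs--Lax--Mock route; note that the paper itself only \emph{recalls} this theorem from Godlewski--Raviart without proof, but your necessity direction is precisely the Legendre-conjugate computation the paper does write out when proving the non-conservative Proposition immediately afterwards (introducing $\eta^{*}(\evar)=\evar^\top\con(\evar)-\eta(\con(\evar))$ and $q^{*}$, and reading off that $\con'(\evar)=(\eta^{*})''$ and ${\fx}'(\con(\evar))\,\con'(\evar)=(q^{*})''$ are symmetric with $\con'(\evar)=\eta''(\con)^{-1}$ positive definite). Your sufficiency direction, recovering $\eta$ and $q^x$ from the potentials $\Phi,\Psi$ obtained via the Poincar\'e lemma and a Legendre transform, is the standard complement and is sound, the only hypotheses you correctly flag being simple connectedness of the admissible set in $\evar$-space and the usual duality bookkeeping.
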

Analogously, we extend the above result for the case of non-conservative hyperbolic systems of the form~\eqref{eq:ssw1d}.

\begin{proposition}
	If $\eta$ is a strictly convex entropy for the non-conservative system of the form~\eqref{eq:ssw1d} and $\eta{'}(\con)\tilde\Bx(\con)=0$, then the change of variable $\con\rightarrow\evar$ with $\evar^\top  = \eta{'}(\con)$ symmetrizes the non-conservative system if and only if
	$\tilde\Bx(\con)\con{'}(\evar)$ is symmetric.
\end{proposition}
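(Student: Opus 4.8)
The plan is to carry out the change of variables $\con \mapsto \evar$ explicitly in the quasilinear form of \eqref{eq:ssw1d} and then reduce the symmetry question to a statement about the two summands of the resulting spatial coefficient matrix. First, since $\eta$ is strictly convex, its Hessian $\eta''(\con)$ is symmetric positive definite, hence invertible; because $\evar^{\top} = \eta'(\con)$ we have $\partial\evar/\partial\con = \eta''(\con)$, so by the inverse function theorem $\con \mapsto \evar$ is a genuine local change of variables and $\con'(\evar) = \left( \eta''(\con) \right)^{-1}$ is again symmetric positive definite. This already settles the ``temporal'' part of the symmetrization: after the change of variables the coefficient of $\partial_t\evar$ is $A^0 := \con'(\evar)$, which is symmetric positive definite.

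Next I would write \eqref{eq:ssw1d} in quasilinear form $\partial_t\con + \left( {\fx}'(\con) + \tilde\Bx(\con) \right)\partial_x\con = 0$, substitute $\con = \con(\evar)$ and use the chain rule to obtain
\[
\con'(\evar)\,\partial_t\evar + \left( {\fx}'(\con) + \tilde\Bx(\con) \right)\con'(\evar)\,\partial_x\evar = 0 .
\]
Thus the system is symmetrized if and only if the spatial coefficient
\[
\tilde A_1 := \left( {\fx}'(\con) + \tilde\Bx(\con) \right)\con'(\evar) = {\fx}'(\con)\con'(\evar) + \tilde\Bx(\con)\con'(\evar)
\]
is symmetric.

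The one point that is not pure bookkeeping is to show that the first summand ${\fx}'(\con)\con'(\evar) = {\fx}'(\con)\left( \eta''(\con) \right)^{-1}$ is symmetric. This is the classical entropy--symmetry identity for the conservative part $\partial_t\con + \partial_x\fx = 0$, and it is available here because $(\eta, q^x)$ is an entropy pair, i.e.\ ${q^x}'(\con) = \eta'(\con){\fx}'(\con)$; differentiating this compatibility relation once and using the symmetry of the Hessian of the scalar $q^x$ gives $\eta''(\con){\fx}'(\con) = {\fx}'(\con)^{\top}\eta''(\con)$, which is equivalent to symmetry of ${\fx}'(\con)\left( \eta''(\con) \right)^{-1}$ (cf.\ \cite{Godlewski1996}, Theorem~3.2). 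I would also remark that the hypothesis $\eta'(\con)\tilde\Bx(\con) = 0$ is what makes $\eta$ an entropy for the \emph{full} non-conservative system but is not itself used in this reduction.

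Given that ${\fx}'(\con)\con'(\evar)$ is symmetric, the equivalence is immediate in both directions: if the change of variables symmetrizes \eqref{eq:ssw1d} then $\tilde A_1$ is symmetric, so $\tilde\Bx(\con)\con'(\evar) = \tilde A_1 - {\fx}'(\con)\con'(\evar)$ is a difference of symmetric matrices, hence symmetric; conversely, if $\tilde\Bx(\con)\con'(\evar)$ is symmetric then $\tilde A_1$ is a sum of symmetric matrices, hence symmetric, and together with $A^0$ symmetric positive definite this is exactly symmetrization. As a consistency check, this same decomposition explains why the matrix $\tilde A_1 - \tilde A_1^{\top}$ displayed above equals $\tilde\Bx(\con)\con'(\evar) - \left( \tilde\Bx(\con)\con'(\evar) \right)^{\top}$ and is nonzero precisely when $g \neq 0$, which is how the non-symmetrizability of the SSW model then follows.
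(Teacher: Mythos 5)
Your proof is correct, and its overall architecture matches the paper's: change variables to $\evar$, observe that the time coefficient $\con'(\evar)=\left(\eta''(\con)\right)^{-1}$ is symmetric positive definite, reduce everything to the symmetry of $\tilde A_1=\left({\fx}'(\con)+\tilde\Bx(\con)\right)\con'(\evar)$, and note that the conservative summand is symmetric so the equivalence with symmetry of $\tilde\Bx(\con)\con'(\evar)$ is immediate. Where you diverge is in how the key lemma --- symmetry of ${\fx}'(\con)\left(\eta''(\con)\right)^{-1}$ --- is established. The paper introduces the Legendre-conjugate potentials $\eta^{*}(\evar)=\evar^\top\con(\evar)-\eta(\con(\evar))$ and $q^{*}(\evar)=\evar^\top\fx(\con(\evar))-q^x(\con(\evar))$, shows ${\eta^{*}}'(\evar)=\con(\evar)^\top$ and ${q^{*}}'(\evar)=\fx(\con(\evar))^\top$, and reads off symmetry of $\con'(\evar)={\eta^{*}}''$ and of ${\fx}'\con'(\evar)={q^{*}}''$ as symmetry of Hessians of scalar functions. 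You instead differentiate the compatibility relation ${q^x}'(\con)=\eta'(\con){\fx}'(\con)$ once and use symmetry of the Hessian of $q^x$ to get $\eta''{\fx}'={\fx}'^\top\eta''$, then conjugate by $\left(\eta''\right)^{-1}$. Both are the classical routes to the Godunov--Mock symmetrization; yours is slightly more elementary and stays in the $\con$ variables, while the paper's conjugate-potential formulation has the side benefit of exhibiting $\con(\evar)$ and $\fx(\con(\evar))$ as gradients of explicit scalar potentials (the same structure underlying the entropy-conservative flux construction elsewhere in the paper). Your remark that the hypothesis $\eta'(\con)\tilde\Bx(\con)=0$ is not used in this reduction is accurate --- the paper likewise never invokes it inside the proof --- and your closing consistency check against the displayed $\tilde A_1-\tilde A_1^\top$ matches the paper's subsequent remark.
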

\begin{proof}
	Define the conjugate functions
	\begin{align*}
		\eta^{*}(\evar)=\evar^\top\con(\evar)-\eta(\con(\evar)), \qquad q^{*}(\evar)=\evar^\top\f(\con(\evar))-q(\con(\evar)).
	\end{align*}
	Differentiating with respect to $\evar$ gives,
	\begin{align*}
		{\eta^{*}}^{'}(\evar)=\con(\evar)^\top-\evar^\top\con'(\evar)-\eta'(\con(\evar))\con'(\evar)=\con(\evar)^\top,
	\end{align*}
	and
	\begin{align*}
		{q^{*}}^{'}(\evar)&=\f(\con(\evar))^\top+\evar^\top\f'(\con(\evar))\con'(\evar)-q'(\con(\evar))\con'(\evar)&\\
		&=\f(\con(\evar))^\top+[\evar^\top\f'(\con(\evar))-q'(\con(\evar))]\con'(\evar)&\\
		&=\f(\con(\evar))^\top
	\end{align*}
	since,
	\begin{align*}
		\evar^\top\f'(\con(\evar))-q'(\con(\evar))=0.
	\end{align*}
	Hence, the matrices $\con'(\evar)={\eta^{*}}^{''}(\evar)$ and $\f'(\con(\evar))\con'(\evar)={q^{*}}^{''}(\evar)$ are symmetric. Moreover, the matrix $\con'(\evar)=\eta{''}(\con(\evar))^{-1}$ is positive definite.\\
	The change of variable yields
	\begin{align*}
		\con'(\evar)\evar_t+[\f'(\con(\evar)) + \tilde\Bx(\con(\evar))]\con'(\evar)\evar_x=0
	\end{align*}
	We need $[\f'(\con(\evar))+\tilde\Bx(\con(\evar))]\con'(\evar)$ to be symmetric, since, $\f'(\con(\evar))\con'(\evar)$ is symmetric we need $\tilde\Bx(\con(\evar))\con'(\evar)$ to be symmetric.
\end{proof}

\begin{remark}
	The matrix $\tilde\Bx(\con(\evar))\con'(\evar)$ for system \eqref{eq:ssw} is not symmetric, since,
	\begin{align*}
		\tilde\Bx(\mathbf{U}(\textbf{V}))\mathbf{U}'(\textbf{V}) - \left[ \tilde\Bx(\mathbf{U}(\textbf{V}))\mathbf{U}'(\textbf{V}) \right]^\top =
		\begin{pmatrix}
			0 & -\alpha & 0 & -\alpha v_1 & -\frac{1}{2} \alpha v_2 & 0 \\
			\alpha & 0 & \alpha v_2 & -\beta_1 & \frac{1}{2} \alpha \p_{12} & \beta_2 \\
			0 & -\alpha v_2 & 0 & -\alpha v_1 v_2 & -\frac{1}{2} \alpha v_2^2 & 0 \\
			\alpha v_1 & \beta_1 & \alpha v_1 v_2 & 0 & a & v_1 \beta_2 \\
			\frac{1}{2}\alpha v_2 & -\frac{1}{2} \alpha \p_{12} & \frac{1}{2} \alpha v_2^2 & -a & 0 & \frac{1}{2}v_2 \beta_2\\
			0 & -\beta_2 & 0 & -\beta_2 & -\frac{1}{2} v_2 \beta_2 & 0
		\end{pmatrix},
	\end{align*}
	where $\alpha=\frac{gh^2}{2},~\beta_1=\frac{1}{4} g h^2 \left(v_1^2-\p_{11}\right),~\beta_2=\frac{1}{4} g h^2 \left(v_2^2+\p_{22}\right),$ \\
	$a=\frac{1}{8} g h^2 \left(2 \p_{12} v_1+\left(v_1^2-\p_{11}\right) v_2\right)$. 
	
	The above matrix is identical to $\tilde A_1 - \tilde A_1^\top$ which we derived explicitly and shown above.
\end{remark}
From the above discussion, we observe that the existence of entropy pair does not guarantee the symmetrizability of the non-conservative hyperbolic systems. In particular, we have seen that the shear shallow water model has the entropy pair $(\eta,q)$ but it is not symmetrizable.
\section{Entropy scaled right eigenvectors for shear shallow water model}\label{scaledrev}
In this section, we will calculate the entropy scaled right eigenvectors for the case of $x-$direction. Consider the conservative part of the SSW system \eqref{eq:ssw},
\begin{equation}
	\df{\con}{t} + \df{\fx}{x} =\df{\con}{t} + A_1\df{\con}{x} = 0, \label{eq:jacobi}
\end{equation}
where $A_1$ is jacobian matrix of the flux function $\fx$. To derive the eigenvalues and right eigenvectors, it is useful to transform the system \eqref{eq:jacobi} in terms of the primitive variables $\bm{W}$. The eigenvalues of the jacobian matrix $A_1$~\cite{biswas2021entropy,sen_entropy_2018} are given by,
\begin{align*}
	v_1-\sqrt{3\p_{11}},\quad v_1-\sqrt{\p_{11}}, \quad v_1,\quad v_1,\quad v_1+\sqrt{\p_{11}}, \quad v_1+\sqrt{3\p_{11}}.
\end{align*}
We observe that if $\p_{11}>0$ then all eigenvalues are real.
The right eigenvector matrix ${R}^x$ for the matrix $A_1$ is given by the relation
\begin{align*}
	{R}^x=\dfrac{\partial \con}{\partial \bm{W}}{R}_{\bm{W}}^x,
\end{align*}
where $\dfrac{\partial \con}{\partial \bm{W}}$ is the jacobian matrix for the change of variable, given by
\begin{align*}
	\dfrac{\partial \con}{\partial \bm{W}}=\begin{pmatrix}
		1                                     & 0               & 0               & 0           & 0           & 0           \\
		v_1                                   & h               & 0               & 0           & 0           & 0           \\
		v_2                                   & 0               & h               & 0           & 0           & 0           \\
		\frac{1}{2}(\p_{11}+v_1^2)   & h v_1           & 0               & \frac{h}{2} & 0           & 0           \\
		\frac{1}{2}(\p_{12}+v_1 v_2) & \frac{h v_2}{2} & \frac{h v_1}{2} & 0           & \frac{h}{2} & 0           \\
		\frac{1}{2}(\p_{22}+v_2^2)   & 0               & h v_2           & 0           & 0           & \frac{h}{2}
	\end{pmatrix},
\end{align*}
and the matrix $R^x_{\bm{W}}$ is given by
\begin{align*}
	R_{\bm{W}}^x=\begin{pmatrix}
		h \p_{11}                        & 0                        & -h               & 0 & 0                       & h \p_{11}                       \\
		-\sqrt{3\p_{11}}\p_{11} & 0                        & 0                & 0 & 0                       & \sqrt{3\p_{11}}\p_{11} \\
		-\sqrt{3\p_{11}}\p_{12} & -\sqrt{\p_{11}} & 0                & 0 & \sqrt{\p_{11}} & \sqrt{3\p_{11}}\p_{12} \\
		2\p_{11}^2                       & 0                        & \p_{11} & 0 & 0                       & 2\p_{11}^2                      \\
		2\p_{11}\p_{12}         & \p_{11}         & \p_{12} & 0 & \p_{11}        & 2\p_{11}\p_{12}        \\
		2\p_{12}^2                       & 2\p_{12}        & 0                & 1 & 2 \p_{12}      & 2\p_{12}^2
	\end{pmatrix}.
\end{align*}
We need to find a scaling matrix $T^x$ such that the scaled right eigenvector matrix $\tilde{R}^x=R^x T^x$ satisfies
\begin{align}\label{eq1}
	\frac{\partial \bm{U}}{\partial \evar} ={{\tilde{R}}^x} {{}{{\tilde{R}}^x}}^\top.
\end{align}
where $\evar$ is the entropy variable vector as in Eqn.~\eqref{entvar}.
We follow Barth scaling process \cite{barth1999numerical} to scale the right eigenvectors. The scaling matrix $T^x$ is the square root of $Y^x$ where $Y^x$ has the expression
\begin{align*}
	{Y}^x= \left(\tilde{R}^x_{\bm{W}} \right)^{-1}
	\frac{\partial \bm{W}}{\partial \evar}
	\left( \frac{\partial \bm{U}}{\partial \bm{W}}\right)^{-\top}
	\left(\tilde{R}^x_{\bm{W}}\right)^{-\top},
\end{align*}
which results in
\begin{align*}
	Y^x=\begin{pmatrix}
		\frac{1}{12 h \p_{11}^2} & 0                                                                                   & 0                                               & 0                                                                                                          & 0                                                                                   & 0                                 \\
		0                                 & \frac{\p_{11} \p_{22}-\p_{12}^2}{4 h \p_{11}^2} & 0                                               & 0                                                                                                          & 0                                                                                   & 0                                 \\
		0                                 & 0                                                                                   & \frac{1}{3h}                                    & \frac{\p_{12}^2}{3 h \p_{11}}                                                            & 0                                                                                   & 0                                 \\
		0                                 & 0                                                                                   & \frac{\p_{12}^2}{3 h \p_{11}} & \frac{3(\p_{11}\p_{22}-\p_{12}^2)^2+\p_{12}^4}{3 h \p_{11}^2} & 0                                                                                   & 0                                 \\
		0                                 & 0                                                                                   & 0                                               & 0                                                                                                          & \frac{\p_{11} \p_{22}-\p_{12}^2}{4 h \p_{11}^2} & 0                                 \\
		0                                 & 0                                                                                   & 0                                               & 0                                                                                                          & 0                                                                                   & \frac{1}{12 h \p_{11}^2}
	\end{pmatrix}.
\end{align*}
The matrix $Y^x$ is a block diagonal matrix which contains the blocks of order $1$ and $2$. It is straightforward to write the square root of a block matrix of order $1$. Consider the $2\times 2$ block sub-matrix of the matrix $Y^x$ and denote it by $Y^{x}_b$,
\begin{align*}
	Y^{x}_b=\begin{pmatrix}
		\frac{1}{3h}                                    & \frac{\p_{12}^2}{3 h \p_{11}}                                                            \\
		\frac{\p_{12}^2}{3 h \p_{11}} & \frac{3(\p_{11}\p_{22}-\p_{12}^2)^2+\p_{12}^4}{3 h \p_{11}^2}
	\end{pmatrix}.
\end{align*}
We need to find matrix $T^{x}_b=\sqrt{Y^{x}_b}$. To obtain formula for the matrix $T^x_b$ we first consider the characteristic polynomial of $T^{x}_b$,
\begin{align}\label{characteristic1}
	{T^{x}_b}^{2}-\trace(T^{x}_b)T^{x}_b+\det(T^{x}_b)I=0,
\end{align}
where $\det(T^{x}_b)=\pm\sqrt{det(Y^{x}_b)}=r_1$,  say, with $r_1$ being the positive square root, given by
\begin{align*}
	r_1 & =\sqrt{\frac{3(\p_{11}\p_{22}-\p_{12}^2)^2+\p_{12}^4}{9 h^2 \p_{11}^2}-\frac{\p_{12}^4}{9 h^2 \p_{11}^2}} & \\
	& =\sqrt{\frac{3(\p_{11}\p_{22}-\p_{12}^2)^2}{9 h^2 \p_{11}^2}}                                                                        & \\
	& =\frac{\p_{11}\p_{22}-\p_{12}^2}{\sqrt{3}h \p_{11}},
\end{align*}
and $I_{2\times2}$ is the identity matrix. Observe from Eqn.~\eqref{characteristic1} that
\begin{align}\label{charateristic2}
	\trace(T^{x}_b)T^{x}_b={T^{x}_b}^2+r_1I=Y^{x}_b+r_1I,
\end{align}
and,
\begin{align*}
	(\trace(T^{x}_b))^2=\trace(\trace(T^{x}_b)T^{x}_b)=\trace(Y^{x}_b+r_1I)=\trace(Y^{x}_b)+2r_1.
\end{align*}
Simultaneously solving Eqns.~\eqref{characteristic1},~\eqref{charateristic2} we obtain
\begin{align} \label{block_T}
	T^{x}_b=\frac{1}{\sqrt{\trace(Y^{x}_b)+2r_1}}(Y^{x}_b+r_1I).
\end{align}
Observe that
\begin{align*}
	{\trace(Y^{x}_b)+2r_1} & =\frac{1}{3h}+\frac{3(\p_{11}\p_{22}-\p_{12}^2)^2+\p_{12}^4}{3 h \p_{11}^2}+\frac{2(\p_{11}\p_{22}-\p_{12}^2)}{\sqrt{3}h \p_{11}}   \\
	& =\frac{\p_{11}^2+{3(\p_{11}\p_{22}-\p_{12}^2)^2+\p_{12}^4}+2\sqrt{3}\p_{11}(\p_{11}\p_{22}-\p_{12}^2)}{3 h \p_{11}^2}      \\
	& =\frac{3(\p_{11}\p_{22}-\p_{12}^2)^2+2\sqrt{3}\p_{11}(\p_{11}\p_{22}-\p_{12}^2)+\p_{11}^2+\p_{12}^4}{3 h \p_{11}^2}      & \\
	& =\frac{(\sqrt{3}(\p_{11}\p_{22}-\p_{12}^2)+\p_{11})^2+\p_{12}^4}{3 h \p_{11}^2}.
\end{align*}
Since $h>0$, we have $\trace(Y^{x}_b)+2r_1>0$. We use notation $\alpha_1 = \sqrt{\trace(Y^{x}_b)+2r_1}$. A long simplification using the block matrix $T^x_b$ as in Eqn.~\eqref{block_T} results in
\begin{align*}
	T^x=\begin{pmatrix}
		\sqrt{\frac{1}{12 h \p_{11}^2}} & 0                                                                                          & 0                                                       & 0                                                                                           & 0                                                                                          & 0                                        \\
		0                                        & \sqrt{\frac{\p_{11} \p_{22}-\p_{12}^2}{4 h \p_{11}^2}} & 0                                                       & 0                                                                                           & 0                                                                                          & 0                                        \\
		0                                        & 0                                                                                          & \frac{\frac{1}{3h}+r_1}{\alpha_1}                       & \frac{\p_{12}^2}{3 h \p_{11}\alpha_1}                                     & 0                                                                                          & 0                                        \\
		0                                        & 0                                                                                          & \frac{\p_{12}^2}{3 h \p_{11}\alpha_1} & \frac{\beta_1(\beta_1+\p_{11})+\p_{12}^4}{3 h \p_{11}^2\alpha_1} & 0                                                                                          & 0                                        \\
		0                                        & 0                                                                                          & 0                                                       & 0                                                                                           & \sqrt{\frac{\p_{11} \p_{22}-\p_{12}^2}{4 h \p_{11}^2}} & 0                                        \\
		0                                        & 0                                                                                          & 0                                                       & 0                                                                                           & 0                                                                                          & \sqrt{\frac{1}{12 h \p_{11}^2}}
	\end{pmatrix}.
\end{align*}

\begin{remark}
	We proceed similarly in the $y-$direction, the eigenvalues for the jacobian matrix $A_2=\df{\f_2}{\con}$ are given by
	
	\begin{align*}
		v_2-\sqrt{3\p_{22}},\quad v_2-\sqrt{\p_{22}},\quad v_2,\quad v_2,\quad v_2+\sqrt{\p_{22}},\quad v_2+\sqrt{3\p_{22}}.
	\end{align*}
	and right eigenvector matrix is given by the relation
	\begin{align*}
		{R}^y=\dfrac{\partial \con}{\partial \bm{W}}{R}_{\bm{W}}^y,
	\end{align*}
	where
	\begin{align*}
		R_{\bm{W}}^y=\begin{pmatrix}
			h \p_{22}                        & 0                        & -h               & 0 & 0                       & h \p_{22}                       \\
			-\sqrt{3\p_{22}}\p_{12} & -\sqrt{\p_{22}} & 0                & 0 & \sqrt{\p_{22}} & \sqrt{3\p_{22}}\p_{12} \\
			-\sqrt{3\p_{22}}\p_{22} & 0                        & 0                & 0 & 0                       & \sqrt{3\p_{22}}\p_{22} \\
			2\p_{12}^2                       & 2 \p_{12}       & 0                & 1 & 2\p_{12}       & 2\p_{12}^2                      \\
			2\p_{22}\p_{12}         & \p_{22}         & \p_{12} & 0 & \p_{22}        & 2\p_{22}\p_{12}        \\
			2\p_{22}^2                       & 0                        & \p_{22} & 0 & 0                       & 2\p_{22}^2
		\end{pmatrix}.
	\end{align*}
	Accordingly, we obtain the scaling matrix $T^y$ as,
	\begin{align*}
		T^y=\begin{pmatrix}
			\sqrt{\frac{1}{12 h \p_{22}^2}} & 0                                                                                          & 0                                                       & 0                                                                                           & 0                                                                                          & 0                                        \\
			0                                        & \sqrt{\frac{\p_{11} \p_{22}-\p_{12}^2}{4 h \p_{22}^2}} & 0                                                       & 0                                                                                           & 0                                                                                          & 0                                        \\
			0                                        & 0                                                                                          & \frac{\frac{1}{3h}+r_2}{\alpha_2}                       & \frac{\p_{12}^2}{3 h \p_{22}\alpha_2}                                     & 0                                                                                          & 0                                        \\
			0                                        & 0                                                                                          & \frac{\p_{12}^2}{3 h \p_{22}\alpha_2} & \frac{\beta_1(\beta_1+\p_{22})+\p_{12}^4}{3 h \p_{22}^2\alpha_2} & 0                                                                                          & 0                                        \\
			0                                        & 0                                                                                          & 0                                                       & 0                                                                                           & \sqrt{\frac{\p_{11} \p_{22}-\p_{12}^2}{4 h \p_{22}^2}} & 0                                        \\
			0                                        & 0                                                                                          & 0                                                       & 0                                                                                           & 0                                                                                          & \sqrt{\frac{1}{12 h \p_{22}^2}}
		\end{pmatrix},
	\end{align*}
	where $r_2=\frac{\p_{11}\p_{22}-\p_{12}^2}{\sqrt{3}h \p_{22}}$ and $\alpha_2=\sqrt{\frac{(\sqrt{3}(\p_{11}\p_{22}-\p_{12}^2)+\p_{22})^2+\p_{12}^4}{3 h \p_{22}^2}}$.
\end{remark}


\end{document}